\documentclass[10pt,reqno]{amsart}
\usepackage{amsmath}
\usepackage{amssymb}
\usepackage{amsthm}
\usepackage[usenames]{color}
\usepackage{eepic,epic}



\textheight 22.5  true cm
\textwidth 15 true cm
\voffset -1.0 true cm
\hoffset -1.0 true cm
\marginparwidth= 2 true cm


\newtheorem{thm}{Theorem}[section]

\newtheorem{lem}[thm]{Lemma}

\theoremstyle{definition}

\theoremstyle{remark}

\numberwithin{equation}{section}


\newcommand{\dist}{\text{dist }}



\begin{document}

\title[]
{Maximal functions for multipliers on Homogeneous groups}

\author{Woocheol Choi}
\subjclass[2000]{Primary}

\address{School of Mathematical Sciences, Seoul National University, Seoul 151-747, Korea}
\email{chwc1987@snu.ac.kr}
\maketitle

\begin{abstract}
We study  $L^p$ boundedness of maximal multipliers on stratified groups.
\end{abstract}

\section{introduction}
\


We consider  a stratified group $G$  with homogeneous dimension $Q$ and  set ${L}$ be a left invariant sub-laplacian on $G$. Denote by $\{ E(\lambda) : \lambda \geq 0\}$ the spectral resolution of ${L}$. Then, for a bounded function $m : [0,\infty) \rightarrow \mathbb{R}$, we can define the multiplier operator $m({L})$ as
\begin{eqnarray*}
m(L)  = \int_0^{\infty} m(\lambda) d E(\lambda).
\end{eqnarray*}
The sufficient conditions on a function $m$ which guarantee $\| m(L) f \|_p \lesssim \| f\|_p$ for all $f \in S(G)$ has been investigated widely in the literature (see,  e.g, Folland-Stein \cite{FS2}). A sharp result was obtained by Christ \cite{C1} and Mauceri-Meda \cite{MM} independently. They proved that the following condition
\begin{eqnarray*}
\sup_{\lambda \in \mathbb{R}^{+}} \| \phi(s) m (\lambda s) \|_{L_2^{\alpha}} < \infty  \quad\qquad \textrm{for some}~\alpha > \frac{Q}{2}
\end{eqnarray*}
is a  sufficient condition. Next, we consider the maximal function $\mathcal{M}_m f (x) := \sup_{t >0} | m ( t L) f (x)|$. Mauceri-Meda (see \cite[Theorem 2.6]{MM}) proved that $\mathcal{M}_m $ is bounded on $L^p (G)$ if the condition
\begin{eqnarray}\label{kzpm2}
\sum_{k \in \mathbb{Z}} \| \phi(\cdot) m(2^k \cdot) \|_{H_2^{s}} = D < \infty
\end{eqnarray}
holds for some $s > Q(\frac{1}{p} -\frac{1}{2}) + \frac{1}{2}$ when $p\in (1,2]$, or for some $ s > (Q-1) (\frac{1}{2}-\frac{1}{p}) +\frac{1}{2}$ when $p \in [2,\infty]$. However, the summability condition \eqref{kzpm2} would be sharpened as we shall see in this paper. For this we shall prove the result on maximal multipliers which was considered on Euclidean space by Christ-Grafakos-Honzik-Seeger \cite{CGHS} and Grafakos-Honzik=Seeger \cite{GHS}.
\begin{thm}\label{thm1} For $1 \leq r < 2$, suppose that there is $\alpha >Q/r$ such that $m_1, \dots, m_N$ satisfy the condition 
\begin{eqnarray*}
\sup_{\lambda \in \mathbb{R}} \| \phi(s) m_i (\lambda)\|_{L^{\alpha}_2} \leq B \qquad \textrm{for}  ~ i = 1,\cdots,N.
\end{eqnarray*}
Then, for all $p \in (r,\infty)$, we have 
\begin{eqnarray*}
\| \sup_{i=1,\dots,N} | m_i (L) f | \|_p \leq C_{p,r} B \sqrt{\log(N+1)} \| f\|_p.
\end{eqnarray*}
\end{thm}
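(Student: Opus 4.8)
The plan is to transplant to the stratified setting the Euclidean arguments of Christ--Grafakos--Honzik--Seeger \cite{CGHS} and Grafakos--Honzik--Seeger \cite{GHS}, substituting for the classical Fourier calculus the spectral machinery available for the sub-Laplacian: Gaussian upper bounds for the heat kernel of $L$ (and hence Bernstein/Nikolskii inequalities, a Littlewood--Paley theory adapted to $L$, and vector-valued maximal inequalities of Fefferman--Stein type), finite propagation speed for the wave group $\cos(t\sqrt L)$, the Mauceri--Meda--Christ weighted Plancherel estimate (if $m$ is supported in a fixed dyadic interval then the convolution kernel $K$ of $m(L)$ obeys $\int_G|K(x)|^2(1+|x|)^{2\beta}\,dx\lesssim\|m\|_{L^\beta_2}^2$ for $\beta\ge 0$), and the single-multiplier theorem of \cite{C1,MM}. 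The first step is linearization: it suffices to bound, uniformly over measurable maps $J\colon G\to\{1,\dots,N\}$, the operator $T_Jf(x)=m_{J(x)}(L)f(x)$, and I write $E_n=J^{-1}(n)$ for the induced partition of $G$. I then fix a smooth dyadic resolution $\sum_{k\in\mathbb Z}\eta_k\equiv 1$ on $(0,\infty)$ with $\eta_k(\lambda)=\eta(2^{-2k}\lambda)$, $\operatorname{supp}\eta\subset[\tfrac14,4]$, and put $m_{n,k}=m_n\widetilde\eta_k$ with $\widetilde\eta_k$ slightly fattened so that $m_{n,k}(L)=m_{n,k}(L)\eta_k(L)$; by the finite-band property and the Sobolev embedding $L^\alpha_2[\tfrac14,4]\hookrightarrow L^\infty$ (here $\alpha>1/2$) one has $\|m_{n,k}\|_{L^\alpha_2}\lesssim B$ uniformly in $n,k$.

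The analytic heart is a single-scale estimate in which the trivial loss $N^{1/2}$ is reduced to $(\log(N+1))^{1/2}$. From the weighted Plancherel estimate and Cauchy--Schwarz one gets $\|m_{n,k}(L)\|_{L^2\to L^2}\lesssim B$ and, crucially using the hypothesis $\alpha>Q/r$, a weighted $L^{r'}$ bound for the rescaled kernels that yields $\|m_{n,k}(L)\|_{L^r\to L^\infty}\lesssim 2^{kQ/r}B$ together with quantitative off-diagonal decay at the scale $2^{-k}$ (finite propagation speed is used to cut the kernel and control its tails). Interpolating these bounds, and using Bernstein on the frequency-localized output, gives $\|m_{n,k}(L)\|_{L^2\to L^q}\lesssim 2^{kQ(1/2-1/q)}B$ with a constant uniform in $q\in(2,\infty)$. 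One then estimates the single-scale, linearized quantity $\sum_n\int_{E_n}|m_{n,k}(L)g|^2$ by decomposing over a grid $\{P_\nu\}$ of balls of radius $2^{-k}$, applying H\"older's inequality with exponent $q/2$ together with the $L^2\to L^q$ bound and the off-diagonal decay, and summing over $n$ using only that the sets $E_n$ are pairwise disjoint (so that the relevant sums of $|E_n\cap P_\nu|^{1-2/q}$ are controlled by $N^{2/q}|P_\nu|^{1-2/q}$); the powers of $2^k$ cancel and one is left with a bound of order $N^{2/q}B^2$ (times the natural square-function data) valid for \emph{every} $q>2$. Choosing $q\sim\log(N+1)$, so that $N^{2/q}\asymp 1$, removes the dependence on $N$ at a single scale; the logarithm reappears only on reassembling the scales, and this device is the source of the power $1/2$.

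Because the spatial cut-offs $\mathbf 1_{E_n}$ destroy the orthogonality between frequency scales, the scales cannot simply be summed; instead I would carry the single-scale analysis through a weighted $L^2$ inequality, using the Littlewood--Paley square function $\bigl(\sum_k|\eta_k(L)f|^2\bigr)^{1/2}$ adapted to $L$ and the Fefferman--Stein vector-valued maximal inequality to restore summability in $k$, and arrive at
\begin{equation*}
\Bigl\|\sup_{1\le n\le N}|m_n(L)f|\Bigr\|_{L^2(w)}\ \lesssim\ B\,\bigl(\log(N+1)\bigr)^{1/2}\,\|f\|_{L^2(\mathcal M w)}\qquad\text{for all weights }w\ge 0,
\end{equation*}
where $\mathcal M$ is a fixed maximal operator of the form $w\mapsto\bigl(M(w^{s})\bigr)^{1/s}$ whose admissible exponent $s>1$ is dictated precisely by the gap $\alpha>Q/r$. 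A Rubio de Francia--type extrapolation for one-weight estimates, started from this inequality, then produces $\|\sup_n|m_n(L)f|\|_p\lesssim_{p,r}B\,(\log(N+1))^{1/2}\|f\|_p$ for every $p$ in the extrapolation range of $\mathcal M$, namely $(r,\infty)$; since extrapolation alters the constant only by a factor depending on $p$ and $r$, the exponent $1/2$ of the logarithm is preserved.

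The main obstacle is exactly this $(\log(N+1))^{1/2}$ gain together with its passage through the scales: extracting a logarithm, and not a power, of $N$ out of a sum over $N$ arbitrary disjoint sets, and doing so through the noncommutative functional calculus of $L$---in the Euclidean case this leans on explicit kernel formulas and sharp Fourier-support localization, while here every such step must be routed through heat-kernel Gaussian bounds, finite propagation speed, and the weighted Plancherel estimate, with constant vigilance that no hidden dependence on $N$ creeps in. A secondary but essential point is to spend the extra regularity $\alpha>Q/r$ efficiently: one needs it to force the single-scale $L^r\to L^\infty$ bound with the sharp power $2^{kQ/r}$, to make the off-diagonal tails summable in the reassembly (the decay exponent one can afford is tied to $\alpha$), and to make the weight $\mathcal M$ strong enough for the extrapolation to reach all $p>r$---all without degrading the exponent $1/2$.
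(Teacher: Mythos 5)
Your outline follows the Christ--Grafakos--Honzik--Seeger route (linearization over measurable selectors, single-scale $L^2\to L^q$ bounds with $q\sim\log(N+1)$ so that $N^{2/q}\asymp 1$, a two-weight $L^2$ inequality, then extrapolation), whereas the paper runs the Grafakos--Honzik--Seeger martingale argument: it builds dyadic martingales $\mathbb{E}_k,\mathbb{D}_k$ on $G$ from Christ's cubes on homogeneous spaces, proves cancellation lemmas between $\mathbb{D}_k$ and the spectral Littlewood--Paley projections $\psi_n(L)$ (via heat-kernel decay and the Mauceri--Meda weighted Plancherel estimate, not finite propagation speed), deduces the pointwise domination $S(m_i(L)f)\le A_rB\,G_r(f)$ of the martingale square function by a Fefferman--Stein-type square function, and then applies the Chang--Wilson--Wolff good-$\lambda$ inequality with $\varepsilon_N\sim(\log(N+1))^{-1/2}$; the $\sqrt{\log(N+1)}$ comes out of $\|G_r(f)\|_p/\varepsilon_N$ while $N\exp(-c_d/\varepsilon_N^2)$ stays bounded. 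So the two proofs extract the logarithm from entirely different mechanisms.

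Beyond being a different route, your proposal has a genuine gap at the final step. From a two-weight inequality of the form $\|\sup_n|m_n(L)f|\|_{L^2(w)}\lesssim B\sqrt{\log(N+1)}\,\|f\|_{L^2(\mathcal{M}w)}$ with $\mathcal{M}w=(M(w^s))^{1/s}$, the standard duality argument ($\||Tf|^2\|_{p/2}=\sup_w\int|Tf|^2w$ over $\|w\|_{(p/2)'}=1$) only yields $L^p$ bounds for those $p>2$ with $(p/2)'>s$, i.e.\ a window of exponents near $2$, and it yields nothing below $p=2$; Rubio de Francia extrapolation in its usual form requires uniform $A_{p_0}$-weighted bounds, which is not what the single-scale analysis produces. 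This is precisely the obstruction in the Euclidean case: the CGHS weighted-$L^2$ method gives the sharp $\sqrt{\log(N+1)}$ only for $p$ in a restricted range, and the martingale good-$\lambda$ argument of GHS (which the paper transplants) was introduced exactly to reach all $p\in(r,\infty)$ without degrading the power $1/2$. As written, your claim that extrapolation ``produces the bound for every $p$ in the extrapolation range of $\mathcal{M}$, namely $(r,\infty)$'' is unsupported, and I do not see how to repair it within the weighted-$L^2$ framework; you would need either the martingale argument or some other device to cover $p$ near $r$ and $p$ large.
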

Combining this theorem with an argument in \cite{GHS} we shall prove the following theorem on the maximal operator $\mathcal{M}_m$.
\begin{thm}\label{thm2}
Suppose that 
\begin{eqnarray*}
\| \phi(\cdot) m(2^k \cdot) \|_{H^{\alpha}_{2}} \leq \omega (k), \quad k \in \mathbb{Z},
\end{eqnarray*}
holds for some $\alpha $ and suppose that the nonincreasing rearrangement $\omega^*$ satisfies 
\begin{eqnarray*}
\omega^{*}(0) + \sum_{l=1}^{\infty} \frac{\omega^{*}(l)}{l \sqrt{\log l}} < \infty.
\end{eqnarray*}
If $\alpha > Q/r + 1 $ for some $1 \leq r <2$, then  $ \mathcal{M}_m$ is bounded on $L^p (G)$ for $p \in (r, \infty)$.
\end{thm}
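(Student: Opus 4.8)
The plan is to combine the finite--family estimate of Theorem~\ref{thm1} with a Littlewood--Paley decomposition and a grouping of the dyadic pieces of $m$ that is tailored to the rearrangement hypothesis on $\omega$. By a density argument I may assume that $f$ lies in a dense subclass for which all the series below converge absolutely. Fix $\phi\in C_c^\infty((1/2,2))$ with $\sum_{k\in\mathbb Z}\phi(2^{-k}\lambda)\equiv1$ on $(0,\infty)$ and set $m^{(k)}(\lambda)=m(\lambda)\phi(2^{-k}\lambda)$, so that $m=\sum_k m^{(k)}$ and, after rescaling, each $m^{(k)}$ satisfies the hypothesis of Theorem~\ref{thm1} with constant $\lesssim\omega(k)$. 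Since $\alpha>Q/r+1$ I keep one derivative in reserve, writing $\beta=\alpha-1>Q/r$, to absorb the continuous supremum over $t$. The argument then has two parts: a ``block'' maximal estimate for a finite sum of pieces, carrying a logarithmic loss in the number of pieces, and a summation in which the $k$'s are grouped by the rank of $\omega(k)$ into blocks whose cardinalities grow doubly exponentially.

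For the block estimate, fix a finite $S\subset\mathbb Z$, put $M_S=\sum_{k\in S}m^{(k)}$ and $B_S=\sup_{k\in S}\omega(k)$; I claim that for every $p\in(r,\infty)$,
\[
\Big\|\sup_{t>0}\big|M_S(tL)f\big|\Big\|_p\ \le\ C_{p,r}\,B_S\,\sqrt{\log(|S|+1)}\,\|f\|_p .
\]
To prove this I would first decompose $f=\sum_\mu\tilde P_\mu f$ by a Littlewood--Paley partition adapted to $L$: since $m^{(k)}$ is supported in the spectral region $\{\lambda\sim2^k\}$, the operator $M_S(tL)$ acts on the band $\tilde P_\mu f$ only through the single piece $m^{(k)}$ with $2^{k}/t\sim2^\mu$, so that on each band at most one scale per element of $S$ is relevant. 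Next, writing $t=2^js$ with $s\in[1,2]$, the fundamental theorem of calculus together with Cauchy--Schwarz dominates $\sup_{s\in[1,2]}|M_S(2^jsL)g|$ by $|M_S(2^jL)g|$ and the companion term $\big(\int_1^2|\widetilde M_S(2^jsL)g|^2\,\tfrac{ds}{s}\big)^{1/2}$, where $\widetilde M_S(\lambda)=\lambda M_S'(\lambda)$ still obeys the required bound with $\beta$ in place of $\alpha$; this is exactly where the spare derivative is spent, and it reduces the supremum over $t>0$ to one over the lattice $\{2^j:j\in\mathbb Z\}$ (plus the harmless $s$--averaged companion). After these reductions the operator takes the form of a maximum over $\lesssim|S|$ admissible scales on each frequency band, and I would deduce the claim by \emph{adapting the proof of Theorem~\ref{thm1}} --- running its randomization (Khintchine--type) argument simultaneously over the scales and the Littlewood--Paley bands --- rather than by a naive band-by-band application, which would cost a power $\sqrt{|S|}$. \emph{I expect this interleaving to be the main obstacle}: the $L^2$ orthogonality of the operators $m^{(k)}(tL)$ across spectral scales and their frequency localization are what should keep the loss at the level $\sqrt{\log(|S|+1)}$.

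Granting the block estimate, I finish by grouping and summing. Order $\mathbb Z$ so that $\omega(k_1)\ge\omega(k_2)\ge\cdots$, i.e.\ $\omega(k_n)=\omega^*(n)$, put $\sigma_l=2^{2^l}$, and let $S_l=\{k_n:\sigma_{l-1}<n\le\sigma_l\}$ for $l\ge1$ and $S_0=\{k_n:n\le\sigma_0\}$. Then $|S_l|\le\sigma_l$, $\sup_{k\in S_l}\omega(k)\le\omega^*(\sigma_{l-1})$ for $l\ge1$ while $\sup_{k\in S_0}\omega(k)=\omega^*(0)$, and $\mathcal{M}_m f\le\sum_{l\ge0}\sup_{t>0}|M_{S_l}(tL)f|$ pointwise, so the block estimate gives
\[
\|\mathcal{M}_m f\|_p\ \lesssim\ \Big(\omega^*(0)+\sum_{l\ge1}\omega^*(2^{2^{l-1}})\,2^{l/2}\Big)\|f\|_p .
\]
It remains to bound the right side by the hypothesis. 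Since $\omega^*$ is nonincreasing, for $2^{2^{l-1}}<j\le2^{2^l}$ one has $\omega^*(j)\ge\omega^*(2^{2^l})$, $\sqrt{\log j}\asymp2^{l/2}$ and $\sum_{2^{2^{l-1}}<j\le2^{2^l}}\tfrac1j\asymp2^{l-1}$, whence $\omega^*(2^{2^l})\,2^{l/2}\lesssim\sum_{2^{2^{l-1}}<j\le2^{2^l}}\tfrac{\omega^*(j)}{j\sqrt{\log j}}$; summing in $l$ and then shifting the index $l\mapsto l-1$ (which costs only a bounded factor together with an extra term $\lesssim\omega^*(0)$, again because $\omega^*$ is nonincreasing), the right side above is $\lesssim\omega^*(0)+\sum_l\tfrac{\omega^*(l)}{l\sqrt{\log l}}<\infty$. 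Hence $\mathcal{M}_m$ is bounded on $L^p(G)$ for all $p\in(r,\infty)$, and by density the estimate extends to all of $L^p(G)$. The doubly exponential block sizes are chosen precisely so that the $\sqrt{\log(|S_l|+1)}$ losses are absorbed by the weight $1/(l\sqrt{\log l})$ in the hypothesis; any sequence $\sigma_l$ with $\log\sigma_l$ growing at least geometrically would serve equally well.
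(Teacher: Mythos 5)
Your overall architecture coincides with the paper's: decompose $m$ into dyadic pieces, group the indices $k$ by the rank of $\omega(k)$ into blocks of doubly exponential size $2^{2^j}$, reduce the continuous supremum over $t$ to the dyadic one by the fundamental theorem of calculus (spending the extra derivative, which is exactly why $\alpha>Q/r+1$), and sum the block bounds $2^{j/2}\omega^*(2^{2^{j-1}})$ against the rearrangement condition. Those outer layers are correct and match the paper essentially verbatim.

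The genuine gap is the block estimate itself, i.e.\ the bound $\|\sup_{k\in\mathbb Z}|m_j(2^kL)f|\|_p\lesssim \sqrt{\log(|S|+1)}\,B_S\,\|f\|_p$, which you state but do not prove: Theorem~\ref{thm1} handles a supremum over $N$ \emph{fixed} multipliers, whereas here the supremum runs over \emph{all} integer dilates of the block multiplier, and your proposed fix --- interleaving a Littlewood--Paley band decomposition with the randomization inside the proof of Theorem~\ref{thm1} --- is precisely the step you flag as the main obstacle and leave unresolved (the supremum over $t$ couples all bands simultaneously, so the band-by-band reduction does not by itself produce a finite family to which the randomized good-$\lambda$ argument applies). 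The paper closes this gap by a different, combinatorial device (Lemma 3.1 of \cite{CGHS}): one chooses a sparse sequence $B=\{b_i\}$ such that the translates $b_i+\mathcal I_j$ are pairwise disjoint while $\mathbb Z$ is covered by the $\lesssim 4^{2^j+1}$ shifts $n+B$. Then $\sup_k|T^j_kf|=\sup_{|n|\le 4^{2^j+1}}\sup_i|T^j_{b_i+n}f|$; for each fixed $n$ the multipliers $m_j(2^{b_i+n}\cdot)$ have pairwise disjoint supports, so the inner supremum is dominated by the square function and, via Khintchine, by an average over Rademacher signs of a \emph{single} multiplier $\sum_i r_i(s)m_j(2^{b_i+n}\cdot)$ whose H\"ormander norm is still $\lesssim\omega^*(2^{2^{j-1}})$ by disjointness; Theorem~\ref{thm1} is then applied only to the resulting finite family of $\lesssim 4^{2^j+1}$ randomized multipliers, yielding the loss $\sqrt{\log 4^{2^j+1}}\approx 2^{j/2}=\sqrt{\log(|S|+1)}$. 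Without this covering-plus-disjointness mechanism (or a worked-out substitute), your block estimate, and hence the theorem, is not established.
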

We also study  the maximal function of multipliers given in Theorem \ref{thm1} on product spaces of stratified groups. As an application, we shall obtain a boundedness  property for the maximal function of joint spectral multipliers on the Heisenberg group. Let $G$ be a product space of $n$ stratified groups $G_1, \cdots, G_n$. Consider sublaplcian $L_j$, $1\leq j \leq n$ and its lifting to $G$ denoted by $L_j^{\sharp}$. Under the following assumption on $m$:
\begin{eqnarray}\label{5A}
| (\xi_1 \partial_{\xi_1})^{\alpha_1} \cdots (\xi_n \partial_{\xi_n})^{\alpha_n} m (\xi_1,\cdots,\xi_n)| \leq C_{\alpha}
\end{eqnarray}
for all $\alpha_j \leq N$, with $N$ large enough, it was proved in \cite{MRS} that the multiplier $m(L_1^{\sharp},\cdots, L_s^{\sharp})$ is bounded on $L^p (G)$.
For these multipliers, we shall derive the following result on the maximal function. 
 \begin{thm}\label{thm3}
Suppose that functions $m_1, \dots, m_N$ on $(\mathbb{R}_{+})^{n}$ satisfy the condition \eqref{5A} uniformly. Then, for all $p \in (1,\infty)$, we have the ineqaulity
\begin{eqnarray*}
\left\| \sup_{1 \leq i \leq N} | m_i (L_1^{\sharp}, \cdots, L_s^{\sharp}) f |\right\|_p \leq C(G) ( \log (N+1))^{s/2} \left\| f \right\|_{p} (G).
\end{eqnarray*}
\end{thm}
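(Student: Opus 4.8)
The plan is to argue by induction on the number $s$ of sub-Laplacians that enter the multiplier, using Theorem~\ref{thm1} to strip off one variable --- at the cost of a factor $(\log(N+1))^{1/2}$ --- at each stage.

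Two reductions come first. Only the factors $G_1,\dots,G_s$ are relevant: on the variables of $G_{s+1}\times\cdots\times G_n$ the operator $m_i(L_1^\sharp,\dots,L_s^\sharp)$ acts as the identity, and for operators of the form $T_i=S_i\otimes I$ on $L^p(X\times Y)$ one has the exact identity $\|\sup_i|T_if|\|_{L^p(X\times Y)}^p=\int_Y\|\sup_i|S_if(\cdot,y)|\|_{L^p(X)}^p\,dy$, so a tensor factor of identity is harmless for the maximal operator; hence we may assume $n=s$ and set $G^{(j)}=G_1\times\cdots\times G_j$. Moreover, since the functions $\sigma\mapsto\phi(\sigma)m_i(\dots,\lambda\sigma,\dots)$ are supported in $\sigma\sim1$, condition \eqref{5A} with $N$ taken large enough implies, via Sobolev embedding, that each $m_i$ --- regarded as a function of any one of its variables with the others frozen --- satisfies the scale-invariant Sobolev hypothesis of Theorem~\ref{thm1} for any $\alpha<N$, and it does so uniformly also for every profile obtained by first applying logarithmic derivatives $(\xi_l\partial_{\xi_l})^{\beta_l}$, $\beta_l\le N$, in the remaining variables. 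In the base case $s=1$ the assertion is then immediate from Theorem~\ref{thm1}: given $p\in(1,\infty)$, choose $r\in(1,\min(p,2))$ and $\alpha\in(Q/r,N)$.

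For the inductive step, assume the bound for $s-1$. Because $L_1^\sharp,\dots,L_{s-1}^\sharp$ act only on the $G^{(s-1)}$-variables and commute with $L_s^\sharp$, the joint spectral calculus factorizes
\begin{equation*}
m_i(L_1^\sharp,\dots,L_s^\sharp)=\int_0^\infty \mathcal A_i(\lambda)\,dE_\lambda^{(s)},\qquad \mathcal A_i(\lambda):=m_i(L_1^\sharp,\dots,L_{s-1}^\sharp,\lambda),
\end{equation*}
where $\{E_\lambda^{(s)}\}$ is the spectral resolution of $L_s^\sharp$ and $\mathcal A_i(\lambda)$ is regarded as a bounded operator on $X:=L^p(G^{(s-1)})$; thus $m_i(\vec L)$ is the spectral multiplier of the single operator $L_s^\sharp$ with operator-valued symbol $\lambda\mapsto\mathcal A_i(\lambda)$, acting on $L^p(G^{(s)})=L^p(G_s;X)$. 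By the reduction above, for each fixed $\lambda$ and each $\beta\le N$ the symbols $\xi'\mapsto(\lambda\partial_\lambda)^\beta m_i(\xi',\lambda)$ satisfy \eqref{5A} on $(\mathbb R_+)^{s-1}$, uniformly in $\lambda$ and $i$, so the induction hypothesis gives
\begin{equation*}
\Big\|\sup_{1\le i\le N}\big|(\lambda\partial_\lambda)^\beta\mathcal A_i(\lambda)g\big|\Big\|_X\le C(\log(N+1))^{(s-1)/2}\,\|g\|_X,\qquad \beta\le N,
\end{equation*}
uniformly in $\lambda$. I would then feed these estimates into a vector-valued form of Theorem~\ref{thm1}, in which the scalar symbols $m_i$ are replaced by operator-valued symbols on the $L^p$-lattice $X$, the scale-invariant Sobolev hypothesis is read as uniform-in-scale control of the $X$-norm of $\sup_i$ of the dyadically localized pieces applied to $g$, and $|m_i(L)f|$ is read as the pointwise-a.e.\ supremum in the lattice $X$. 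Applied with the bounds just displayed playing the role of "$B$", this yields the extra factor $(\log(N+1))^{1/2}$ and hence the constant $(\log(N+1))^{(s-1)/2}\cdot(\log(N+1))^{1/2}=(\log(N+1))^{s/2}$ on $L^p(G^{(s)})$ for all $p\in(1,\infty)$, completing the induction.

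The main obstacle is precisely this vector-valued version of Theorem~\ref{thm1}: since $\sup_i|\cdot|$ is nonlinear, one cannot simply tensor the scalar statement with the identity on $X$. I would instead return to the proof of Theorem~\ref{thm1} and isolate the two mechanisms it relies on --- on the one hand, the heat-kernel Gaussian bounds and finite propagation speed on $G_s$, which localize the convolution kernels of the Littlewood--Paley pieces of $m_i(L_s^\sharp)$ and which persist unchanged for $X$-valued functions; on the other hand, the passage from a trivial $\ell^\infty$ bound to an $\ell^2$ orthogonality/square-function estimate that produces the gain $\sqrt{\log(N+1)}$, which must now be carried out $X$-valued, with the induction hypothesis (which supplies exactly the $L^p(G^{(s-1)})$-valued maximal bound for the frozen multipliers $\mathcal A_i(\lambda)$) taking the place of the scalar orthogonality input. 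Verifying that both steps survive with values in an $L^p$-lattice, and that the number of derivatives consumed --- one variable's worth per iteration --- never exceeds what \eqref{5A} provides, is the remaining, and expectedly routine, bookkeeping.
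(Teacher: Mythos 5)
Your route---induction on $s$, freezing the last variable, and treating $m_i(L_1^{\sharp},\dots,L_s^{\sharp})$ as a spectral multiplier of $L_s^{\sharp}$ with operator-valued symbol $\mathcal A_i(\lambda)$ acting on the lattice $X=L^p(G_1\times\cdots\times G_{s-1})$---is genuinely different from the paper's, which never freezes variables. The paper works directly with the multi-parameter martingales $D_{k_1,\dots,k_n}$ on the product group, introduces the Pipher--Honzik intermediate square functions $S_m$ and $S_m^{*}$, proves a one-parameter-at-a-time good-$\lambda$ inequality (Lemma \ref{for2m}) from the exponential-square estimate of Lemma \ref{lemp}, and then chains $n$ such good-$\lambda$ steps together, each contributing one factor $(\log(N+1))^{1/2}$ for the choice $\epsilon\sim(\log(N+1))^{-1/2}$, with the tail controlled by the product-type $G_r$ and Fefferman--Stein.

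The gap in your argument is exactly the step you dismiss as ``remaining, and expectedly routine, bookkeeping'': the vector-valued form of Theorem \ref{thm1}. That step is the entire content of the multi-parameter problem, and it is not routine. The $\sqrt{\log(N+1)}$ gain in Theorem \ref{thm1} comes from the Chang--Wilson--Wolff subgaussian good-$\lambda$ inequality \eqref{measx}, which is an exponential-square estimate for scalar dyadic martingales; what you would need is an analogue for martingales with values in $X=L^p$ in which ``size'' is measured by a pointwise a.e.\ supremum over the $X$-variable rather than by the $X$-norm. No such estimate is stated or known to you, and producing one is precisely the difficulty that Pipher's lemma for \emph{families} of martingales and the intermediate square functions are designed to circumvent. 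There is a second mismatch: your induction hypothesis delivers a maximal bound $\|\sup_i|\mathcal A_i(\lambda)g|\|_X\lesssim(\log(N+1))^{(s-1)/2}\|g\|_X$ at each \emph{fixed} $\lambda$, but a H\"ormander-type theorem with operator-valued symbol consumes square-function or orthogonality control of the family $\{\mathcal A_i(\lambda)\}_{\lambda}$ across scales (the analogue of the $L^2_\alpha$ hypothesis on dyadic pieces), uniformly in $i$ and compatibly with the outer supremum over $i$; your sketch does not extract any such quantity from \eqref{5A}. Until the lattice-valued maximal multiplier theorem is actually formulated and proved, the induction does not close.
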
 

\

As an application of this theorem we may obtain similar result for the joint spectral multipliers on the Heisenberg group $\mathbb{H}_n$ with the sub-Laplacian $\Delta$ and $T = \frac{\partial}{\partial t}$. The $L^p$ boundedness of the joint spectral multiplier $m(\Delta, iT)$ was studied in \cite{MRS}. Using Theorem \ref{thm3} and the transference method of \cite{CW}, we shall prove the following theorem.
\begin{thm}\label{thm4}
Suppose that functions $m_1, \dots, m_N$ on $(\mathbb{R}_{+})^{2}$ satisfy the condition \eqref{5A} uniformly. Then, for all $p \in (1,\infty)$, we have
\begin{eqnarray*}
\left\| \sup_{1 \leq i \leq N} | m_i (\Delta, iT) f |\right\|_p \leq C(\mathbb{H}_n) ( \log (N+1)) \left\| f\right\|_{p}.
\end{eqnarray*}
\end{thm}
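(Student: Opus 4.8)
The plan is to deduce Theorem~\ref{thm4} from Theorem~\ref{thm3} by a transference argument. Write points of $\mathbb{H}_n$ as $(z,t)$ with $t$ the one-dimensional central variable, so that $T=\partial_t$, and form the product group $G=\mathbb{H}_n\times\mathbb{R}$ with generic point $(z,t,u)$. On $G$ let $\Delta^{\sharp}$ be the lift of the sub-Laplacian of the first factor and let $-\partial_u^2$ be the sub-Laplacian of the second factor; these are exactly the lifted sub-Laplacians of two stratified factors to which Theorem~\ref{thm3} applies with $s=2$, so the bound it provides is $(\log(N+1))^{2/2}=\log(N+1)$, matching the claim. The central translations $(z,t)\mapsto(z,t+u)$ of $\mathbb{H}_n$, combined with the right regular representation of $\mathbb{H}_n$ on $L^p(\mathbb{H}_n)$, define a strongly continuous isometric representation $\pi$ of $G$ on $L^p(\mathbb{H}_n)$ (concretely $\pi((z_0,t_0),u)f(z,t)=f((z,t)(z_0,t_0)(0,u))$, which factors through the homomorphism $G\to\mathbb{H}_n$), and a short computation of the one-parameter subgroups gives $d\pi(\Delta^{\sharp})=\Delta$ and $d\pi(-\partial_u^2)=-T^2$. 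By the Coifman--Weiss transference principle \cite{CW}, a joint multiplier $m(\Delta^{\sharp},-\partial_u^2)$ on $L^p(G)$ — which is convolution against a kernel on $G$ — transfers through $\pi$ to the operator $m(\Delta,-T^2)$ on $L^p(\mathbb{H}_n)$ with norm control uniform in $m$; since the Coifman--Weiss averaging is linear in the function and indifferent to the target space, the same holds for the $\ell^\infty_N$-valued operator $f\mapsto(m_i(\Delta,-T^2)f)_{i=1}^N$, whence
\[
\Big\|\sup_{1\le i\le N}|m_i(\Delta,-T^2)f|\Big\|_{L^p(\mathbb{H}_n)}\le C_p\,\Big\|\sup_{1\le i\le N}|m_i(\Delta^{\sharp},-\partial_u^2)F|\Big\|_{L^p(G)},\qquad \|F\|_{L^p(G)}=\|f\|_{L^p(\mathbb{H}_n)}.
\]

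Next I would pass from the self-adjoint operator $iT$, whose spectrum is all of $\mathbb{R}$, to the nonnegative operator $-T^2$. Splitting $m_i$ into its even and odd parts in the second variable, write $m_i(\xi_1,\xi_2)=a_i(\xi_1,\xi_2^2)+\xi_2\,b_i(\xi_1,\xi_2^2)$; by the spectral theorem $m_i(\Delta,iT)=a_i(\Delta,-T^2)+\mathcal R\,c_i(\Delta,-T^2)$, where $c_i(\xi_1,\eta)=\eta^{1/2}b_i(\xi_1,\eta)$ and $\mathcal R=iT(-T^2)^{-1/2}$ is, up to a constant, the Hilbert transform in the central variable $t$; thus $\mathcal R$ is bounded on $L^p(\mathbb{H}_n)$ for $1<p<\infty$ and commutes with $\Delta$ and $T$. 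The maps $m_i\mapsto a_i$ and $m_i\mapsto c_i$ preserve condition \eqref{5A} up to constants depending only on the order $N$ of the derivatives: the substitution $\eta=\xi_2^2$ turns $\xi_2\partial_{\xi_2}$ into $2\eta\partial_\eta$, and the extra factor $\eta^{1/2}$ contributes only bounded terms under $\eta\partial_\eta$ once one uses that, $m_i$ being smooth, its odd part vanishes to first order at $\xi_2=0$, so $b_i$ extends smoothly across $\eta=0$. Hence $\{a_i\}$ and $\{c_i\}$ again satisfy \eqref{5A} uniformly.

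Finally I would assemble the pieces. Since $\mathcal R$ commutes with $\Delta$ and $T$,
\[
\sup_{1\le i\le N}|m_i(\Delta,iT)f|\ \le\ \sup_{1\le i\le N}|a_i(\Delta,-T^2)f|\ +\ \sup_{1\le i\le N}|c_i(\Delta,-T^2)(\mathcal R f)|,
\]
and applying the transference inequality of the first paragraph, then Theorem~\ref{thm3} on $G=\mathbb{H}_n\times\mathbb{R}$ (with $s=2$, hence the bound $\log(N+1)$) to each of the two families $\{a_i\}$, $\{c_i\}$, together with $\|\mathcal R f\|_p\lesssim\|f\|_p$, yields $\big\|\sup_i|m_i(\Delta,iT)f|\big\|_p\lesssim\log(N+1)\,\|f\|_p$ for all $p\in(1,\infty)$, which is Theorem~\ref{thm4}.

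I expect the main obstacle to be the transference identification itself: one must verify that the joint functional calculus of $(\Delta^{\sharp},-\partial_u^2)$ on $G$ really does transport, through $\pi$, to that of $(\Delta,-T^2)$ on $\mathbb{H}_n$, and that this persists for the linearized (i.e.\ $\ell^\infty_N$-valued) maximal operator; running the Coifman--Weiss averaging requires sufficient decay and size control of the convolution kernels of the $m_i(\Delta^{\sharp},-\partial_u^2)$, which is precisely what condition \eqref{5A} together with the Müller--Ricci--Stein estimates \cite{MRS} supply. A secondary, purely bookkeeping, point is the reduction of the $m_i$, given a priori only on the joint spectrum of $(\Delta,iT)$ (a proper subcone of $\mathbb{R}_+\times\mathbb{R}$), to Schwartz functions on $\mathbb{R}_+^2$ satisfying \eqref{5A}, handled by a routine extension-and-limiting argument.
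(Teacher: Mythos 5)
Your overall strategy coincides with the paper's: both proofs deduce Theorem \ref{thm4} from the product-space result by transferring from $G=\mathbb{H}_n\times\mathbb{R}$ to $\mathbb{H}_n$ via the Coifman--Weiss method, using the M\"uller--Ricci--Stein relation between the kernel $K$ on $G$ and its fiber integral $K^{b}$ on $\mathbb{H}_n$; the paper implements this concretely by truncating the kernel in the extra variable $u'$, inserting a cutoff $\chi_R$ with $R\geq 10M$, and running the averaging with the supremum over $j$ kept inside the norm, which is exactly the $\ell^\infty_N$-valued transference you invoke abstractly. Where you genuinely diverge is in which pair of operators lives on the product group. The paper transfers $m_j(L^{\sharp},iT)$ directly, i.e.\ it keeps the first-order operator $i\partial_u$ as the second spectral variable on $G$ and feeds the resulting maximal operator into Theorem \ref{thm3}; strictly speaking $i\partial_u$ is not a sub-Laplacian, so this use of Theorem \ref{thm3} is not literal. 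You instead take the genuine sub-Laplacian $-\partial_u^2$ on the $\mathbb{R}$ factor, so Theorem \ref{thm3} applies verbatim with two factors (giving the exponent $1$ on $\log(N+1)$), and then recover $iT$ from $-T^2$ by splitting each $m_i$ into even and odd parts in $\xi_2$ and factoring out the partial Hilbert transform $\mathcal{R}=iT(-T^2)^{-1/2}$, which is bounded on $L^p(\mathbb{H}_n)$ and commutes with the joint calculus. This detour costs you the verification that $a_i$ and $c_i$ again satisfy \eqref{5A} (your computation with $\xi_2\partial_{\xi_2}=2\eta\partial_\eta$ and the first-order vanishing of the odd part is correct, modulo the extension of $m_i$ from $(\mathbb{R}_+)^2$ to two-sided $\xi_2$, an ambiguity already present in the statement of the theorem), but it buys a cleaner match with the hypotheses of Theorem \ref{thm3}. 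Both arguments rest on the same unproved-in-this-paper identification of the transferred joint functional calculus, which you correctly flag as the main obstacle and which is supplied by \cite{MRS}.
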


In order to prove Theorem 1.1 we shall use the noble argument of Grafakos-Honzik-Seeger \cite{GHS} which use the good $\lambda$ inequality for martingale operators proved by Chang \emph{et al.}\cite{CWW}. For using the martingales, they proved several lemmas which show the cancellation property arising when we compose the $2^{-k}$ martingale operators and the Littlewood-Paley projection  $P_j $. To show it, one can use a simple technique on the frequency side to localize the support of the kernel of $P_j $ to the area $\{ |x| : |x| \leq 2^{-j +3}\}$. In order to use this argument in our setting, we first need an analogue for stratified groups of the dyadic martingales on Euclidean space. Fortunately we can use the martingales on  homogeneous space constructed by Christ \cite{C2}. However, due to the technical difficulties of the Fourier transform on stratified groups, it is not easy to adapt the former technique to localize the Littlewood-Paley projections on stratified groups to prove the cancellation property arising in composing the martingale operators and the  projections. Instead, we shall obtain weaker versions of the lemmas through a more direct estimate after a suitable partitioning of the kernel of the projection. 
\

On stratified groups, it is not easy as on Euclidean space to obtain information on kernels of multipliers. Nevertheless, a technique was developed by Stein and Folland \cite{FS2} using the kernel of the heat semi-group $e^{-t L}, t >0$. In addition, Christ \cite{C1} and Mauceri-Meda \cite{MM} obtained a sharp information on the integrability of the kernels by using Plancherel formula on stratified groups (see Lemma \ref{MME2} below).  We shall extend their lemma to $q > 2$ with using the heat kernel to bound multiplier operators with localized multiplier function pointwisely by Hardy-Littlewood maximal function (see Lemma \ref{tkfx}). 
\

For proving Theorem \ref{thm3} we shall use an inductional argument of Honzik \cite{H} who made use of the intermediate square functions (see Section 4) to study the boundedness of maximal functions of marcinkiewicz multipliers. Note that this multiplier corresponds to multiplier on the product space $G = \mathbb{R} \times \cdots \times \mathbb{R}$. In \cite{H} the proof use the $L^p$ boundedness property of the multipliers with characteristic functions on rectangles in $\mathbb{R}^n$. Therefore, we cannot adapt his argument directly to prove our theorem. Remark also that we shall make a modification of a main lemma \cite[Lemma 2.2]{P}. Precisely, we shall prove the lemma for $g = (1- \mathcal{E}_m) f$ instead of $g = E_{N,\cdots,N} f$ as in \cite{H} (see Lemma \ref{for2m}).
\

This paper is organized as follows. In section 2, we study properties of kernels corresponding to multiplier operator and review on the martingale on homogeneous space. In section 3, we prove several lemmas explaining cancellation between martingale and Littlewood-Paley operators. Based on these lemmas, we prove our main theorems in the last section. In section 4, we generalize the above criterion to product spaces and we obtain boundedness for maximal functions of the joint spectral multipliers.
\

We shall use the notation $A \lesssim B$ to indicate an inequality $A \leq CB$ where $C$ may only depend on the background spaces and the index $p$ of the space $L^p$ used in the inequality.

\section{ Kernels of multipliers on Stratified groups}
In this section we recall the background of stratified groups and two lemmas about kernels of multipliers on the spaces from Folland-Stein \cite{FS} and Mauceri-Meda \cite{MM}. Then, we shall prove two lemmas to bound a multiplier with a localized spectrum by Hardy-Littlewood maximal functions. 
\

Let $\mathfrak{g}$ be a finite-dimensional nilpotent Lie algebra of the form
\begin{eqnarray*}
\mathfrak{g} = \bigoplus_{i=1}^{s} \mathfrak{g}_i
\end{eqnarray*}
where $[\mathfrak{g}_i, \mathfrak{g}_j ] \subset \mathfrak{g}_{i+j}$ for all $i, j $. Set $G$ be the associated simply connedted Lie group. Then, its homogeneous dimension is $ Q = \sum_j j \cdot \textrm{dim}(\mathfrak{g_j})$. We call it a stratified group when $\mathfrak{g_1}$ generates $\mathfrak{g}$ as a Lie algebra. We shall always let $G$ be a stratified group in this paper. 
\

We denote by $\{ \delta_r : r >0 \}$ a family of dilations of the Lie algebra $\mathfrak{g}$ which satisfy $\delta_r X = r^j X$ for $X \in \mathfrak{g}_j$, and is extended by linearity. We shall also denote by $\{ \delta_r : r> 0\}$ the induced family of dilations of $G$. They are group automorphisms. We define a homogeneous norm of $G$ to be a continuous function $| \cdot | : G \longrightarrow [0,\infty)$ which is, $C^{\infty}$ away from $0$, and satisfies $|x|=0 \Leftrightarrow x=0$ and $| \delta_r x | = r |x|$ for all $ r \in \mathbb{R}^{+}$, $x \in G$.
\

We denote $S(G)$ be the space of Schwartz functions in $G$. Now we choose any finite subset $\{X_k\}$ of $\mathfrak{g}_1$ which spans $\mathfrak{g}_1$. We may identify each $X_k$ with a unique left-invariant vector field on $G$. We also denote it by $X_k$. Then we define a sublaplacian as $L = - \sum X_k^2$, which is a left-invariant second-order differential operator. $L^p (G)$ is defined with respect to a bi-invariant Haar measure. As an operator on $\{ f \in L^2 (G) : L f \in L^2 (G)\}$, $L$ is self-adjoint. Therefore it admits a spectral resolution $L = \int_0^{\infty} \lambda dP_{\lambda}$. For a bounded Borel function $m$ on $[0,\infty)$, we define the bounded operator $m(L)$ on $L^2$ by 
\begin{eqnarray*}
m(L) = \int^{\infty}_0 m (\lambda) dP_{\lambda}.
\end{eqnarray*}
By the Schwartz kernel theorem, there exists a tempered distribution $k_m$ on $G$ satisfying $m(L) f = f * k_m$ for all functions in $G$. For a tempered distribution $k$ on $G$, we denote by $k_{(t)}$ for $t >0$ the distribution satisfying
\begin{eqnarray*}
\langle k_{(t)}, f \rangle = \langle k, f \circ \delta_t \rangle
\end{eqnarray*}
for all $ f \in S$. If $ k$ is a measurable function on $G$, then $k_{(t)} (x) = \frac{1}{t^Q} f( \frac{x}{t})$. 
\

The heat semigroup $\{ e^{-t L}\}_{t >0}$ on $G$ can be defined as 
\begin{eqnarray*}
e^{-t L} = \int^{\infty}_0 e^{-\lambda t } dP_\lambda
\end{eqnarray*}
and we set  $h_t (x)$ be the heat kernel satisfying $e^{-t L} f = f * h_t $ for all $f \in L^2$. Let us simply write $h(x)$ for $h_1 (x)$. Then we have $h_t (x) = h_{(\sqrt{t})} (x)$ and it was proved in \cite{JS} that $|h(x)| \leq C e^{-c_0 |x|^2}$ for some $ c_0, C \in \mathbb{R}^{+}$.
\
The next lemma is from \cite[Lemma 6.29]{FS2}.

\begin{lem}\label{dilation} If M is a bounded Borel function on $(0,\infty)$, let $K$ be the distribution kernel of $M(L)$. Then for any $t>0$, if $M_{(t)}(\lambda) = M(t\lambda)$, the distribution kernel of $M_{(t)}(L)$ is $K_{\sqrt{t}}$
\end{lem}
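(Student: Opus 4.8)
The plan is to transfer the homogeneity of the sub-Laplacian through the functional calculus. Since each generator $X_k\in\mathfrak{g}_1$ is homogeneous of degree $1$ for the dilations, one has $X_k(f\circ\delta_r)=r\,(X_kf)\circ\delta_r$ for $f\in S(G)$, hence $X_k^2(f\circ\delta_r)=r^2\,(X_k^2 f)\circ\delta_r$ and therefore $L(f\circ\delta_r)=r^2\,(Lf)\circ\delta_r$. Writing $D_r f=f\circ\delta_r$, this is the operator identity $D_r^{-1}LD_r=r^2L$ on a suitable core (e.g. $S(G)$). Because $\delta_r$ scales Haar measure by $r^Q$, the operator $r^{Q/2}D_r$ is unitary on $L^2(G)$, and conjugating the spectral resolution $L=\int_0^\infty\lambda\,dP_\lambda$ by this unitary (the scalar $r^{Q/2}$ being harmless) gives
\begin{eqnarray*}
D_r^{-1}M(L)D_r=M(D_r^{-1}LD_r)=M(r^2L)=M_{(r^2)}(L)
\end{eqnarray*}
for every bounded Borel function $M$, all operators involved being bounded on $L^2$.

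Next I would read off the convolution kernel of the conjugated operator. Fix $f\in S(G)$ and write $M(L)f=f*K$. Then, using $(\delta_r z)^{-1}=\delta_r(z^{-1})$, the automorphism property $\delta_r(zw)=(\delta_r z)(\delta_r w)$, and the substitution $w=\delta_r z$ (Jacobian $r^Q$),
\begin{eqnarray*}
M_{(r^2)}(L)f(x)=D_r^{-1}\bigl[(D_rf)*K\bigr](x)
 =r^{-Q}\int_G f(w)\,K\bigl(\delta_{1/r}(w^{-1}x)\bigr)\,dw
 =(f*K_{(r)})(x),
\end{eqnarray*}
where $K_{(r)}(y)=r^{-Q}K(\delta_{1/r}y)=r^{-Q}K(y/r)$ is precisely the rescaled distribution in the sense fixed above via $\langle K_{(r)},g\rangle=\langle K,g\circ\delta_r\rangle$. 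Setting $r=\sqrt t$ (so that $r^2=t$) shows that the distribution kernel of $M_{(t)}(L)$ is $K_{(\sqrt t)}$, which is the claim; as a sanity check, for $M(\lambda)=e^{-\lambda}$ this reduces to $h_t=h_{(\sqrt t)}$, the identity recorded before the lemma.

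Only bookkeeping, not conceptual work, needs care. The homogeneity $D_r^{-1}LD_r=r^2L$ and the ensuing conjugation of the functional calculus are immediate on $L^2(G)$ from the spectral theorem; the kernel identity, which is a priori an identity of tempered distributions furnished by the Schwartz kernel theorem, then passes to the distributional level exactly through the definition of $K_{(\sqrt t)}$ by the pairing $\langle k_{(t)},g\rangle=\langle k,g\circ\delta_t\rangle$. I expect the step most prone to sign/scaling errors — hence the one to carry out carefully — to be the change of variables in the non-abelian convolution, specifically tracking $(\delta_r z)^{-1}$, $\delta_r(zw)$, and the Jacobian $r^Q$ simultaneously.
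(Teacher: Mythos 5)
The paper offers no proof of this lemma at all --- it is simply quoted from \cite[Lemma 6.29]{FS2} --- so there is nothing internal to compare against; your argument is correct and is essentially the standard proof of the cited result: the homogeneity $D_r^{-1}LD_r=r^2L$ on the core $S(G)$, conjugation of the Borel functional calculus by the unitary $r^{Q/2}D_r$, and the non-abelian change of variables identifying the kernel of the conjugated operator with $K_{(\sqrt t)}$. Two points you gloss over are routine but worth stating: $L$ is essentially self-adjoint on $S(G)$ (standard for sub-Laplacians on stratified groups), so the core identity really does transfer to the self-adjoint operators and their spectral resolutions; and for a general bounded Borel $M$ the kernel $K$ is only a tempered distribution, so the convolution computation should be read through the pairing $\langle K_{(\sqrt t)},g\rangle=\langle K,g\circ\delta_{\sqrt t}\rangle$, exactly as you indicate in your closing remark.
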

We recall \cite[Lemma 1.2]{MM} which was essential to prove the sharp multiplier theorem on stratified groups.
\begin{lem}\label{MME2}
For $\alpha \geq 0$, $1 \leq p \leq2$, we suppose that $s > \alpha/p + Q(1/p -1/2)$. Then, for a function $m \in H^{s}_2(\mathbb{R}_{+})$ with its support in $(1/2,2)$ and the distribution kernel $k$ of $m(L)$, there exists a constant $C_I >0$  for each multi-index $I$ such that
\begin{eqnarray}\label{MME}
\int_G |x|^{\alpha} |Y^{I}k(x)|^p dx \leq C_I \| m\|_{H^s_2}^p.
\end{eqnarray}
\end{lem}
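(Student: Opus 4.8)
The plan is to reduce the statement of Lemma~\ref{MME2} to the classical Plancherel estimate on stratified groups due to Hulanicki and Jenkins, combined with the self-improving integrability of the sub-Laplacian spectral projections. First I would recall that for a function $m$ supported in $(1/2,2)$ the operator $m(L)$ has a Schwartz convolution kernel $k$, and that the Plancherel theorem for $L$ on $G$ (in the form used in \cite{MM}) gives the $L^2$ identity
\begin{eqnarray*}
\int_G |Y^I k(x)|^2\, dx \;\lesssim\; \| \lambda^{|I|/2} m(\lambda)\|_{L^2(d\mu_L)}^2 \;\lesssim\; \|m\|_{H^s_2}^2,
\end{eqnarray*}
where $d\mu_L$ is the Plancherel measure associated to $L$, using that the restriction of $d\mu_L$ to $(1/2,2)$ is comparable to Lebesgue measure and that differentiation $Y^I$ on the kernel side corresponds to multiplication by symbols of order $|I|$ in $\sqrt{L}$ on the multiplier side. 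This handles the $p=2$, $\alpha=0$ case.

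Next I would introduce the weight $|x|^\alpha$. The key observation is that multiplication of the kernel by $|x|^\alpha$ can be traded, up to constants, for applying fractional powers of a first-order operator on the multiplier side: one writes $|x|^\alpha |Y^I k(x)|$ and uses the subelliptic estimates together with the fact that $m$ is supported where $\lambda\sim 1$, so that $\sqrt{L}$ acts boundedly and its fractional powers up to order $\alpha$ absorb $|x|^\alpha$ at the cost of replacing $\|m\|_{H^s_2}$ by $\|m\|_{H^{s'}_2}$ for a slightly larger but still admissible $s'$ (this is where the hypothesis $s>\alpha/p+Q(1/p-1/2)$ enters — the $\alpha/p$ term pays for the weight, the $Q(1/p-1/2)$ term pays for the passage from $L^2$ to $L^p$ with $p<2$). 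Concretely, I expect the estimate
\begin{eqnarray*}
\int_G |x|^{\alpha}|Y^I k(x)|^2\, dx \;\lesssim\; \|m\|_{H^{s}_2}^2 \qquad \text{whenever } s>\tfrac{\alpha}{2},
\end{eqnarray*}
obtained by expanding $m$ in a Littlewood–Paley-type decomposition adapted to $H^s_2$ and summing a geometric series in the frequency parameter, with the $|x|^\alpha$ factor producing the gain $2^{-j(s-\alpha/2)}$ per piece.

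The final step is the passage from $L^2$ to $L^p$ for $1\le p<2$. Here I would use Hölder's inequality against the measure of the region where the kernel is effectively concentrated. Since $m$ is supported in $(1/2,2)$, Lemma~\ref{dilation} tells us $k$ is essentially at unit scale, and the heat kernel bound $|h(x)|\le Ce^{-c_0|x|^2}$ together with $m(\lambda)=\big(m(\lambda)e^{\lambda}\big)e^{-\lambda}$ shows $|Y^I k(x)|$ decays rapidly, in particular is concentrated (up to rapidly decreasing tails) in a ball of radius comparable to $1$, a set of finite measure $\sim 1$. Then for $p<2$,
\begin{eqnarray*}
\int_G |x|^\alpha |Y^I k(x)|^p\, dx \;\le\; \Big(\int_G |x|^{\alpha} |Y^I k(x)|^2\, dx\Big)^{p/2}\Big(\int_G |x|^{\alpha}\, w(x)\, dx\Big)^{1-p/2},
\end{eqnarray*}
with a suitable rapidly decreasing weight $w$ controlling the tail; more carefully, one localizes dyadically in $|x|\sim 2^\ell$, applies Cauchy–Schwarz on each annulus using that its volume is $\sim 2^{\ell Q}$, and sums, which is exactly where the extra loss $Q(1/p-1/2)$ in the Sobolev exponent is consumed.

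The main obstacle I anticipate is the careful bookkeeping in the second and third steps: one must verify that applying $Y^I$ and the weight $|x|^\alpha$ really does correspond, on the multiplier side, to an operator whose $H^s_2\to L^2(d\mu_L)$ norm is controlled under the stated constraint on $s$, and that the dyadic-in-$|x|$ summation over annuli converges with the claimed exponent rather than a worse one. In \cite{MM} this is handled by a clever use of the Plancherel measure's polynomial growth $\mu_L([0,\lambda])\sim \lambda^{Q/2}$; I would follow that route, treating the weighted $L^2$ bound as the analytic core and the $L^p$ reduction as a soft interpolation/Hölder argument on annuli.
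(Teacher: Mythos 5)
The paper itself contains no proof of Lemma \ref{MME2}: it is quoted as Lemma 1.2 of Mauceri--Meda \cite{MM}, so there is no internal argument to compare yours against. Your outline does follow the route of \cite{MM} in structure: a weighted $L^2$ (Plancherel-type) kernel estimate as the analytic core, then a passage from $L^2$ to $L^p$, $1\le p<2$, by H\"older's inequality. Your final step is essentially right once you replace the first displayed H\"older inequality (which is not correct as written, since the weight $w$ there has no justification) by the dyadic-annulus version you describe afterwards: on $\{|x|\sim 2^{\ell}\}$, of measure $\sim 2^{\ell Q}$, H\"older with exponents $2/p$ and $2/(2-p)$ gives a contribution $\lesssim 2^{-\ell p(\beta-\alpha/p)}\,2^{\ell Q(2-p)/2}\,\|m\|_{H^{\beta}_2}^{p}$ once one has the weighted $L^2$ bound with weight $|x|^{2\beta}$, and the sum over $\ell$ converges exactly when $\beta>\alpha/p+Q(1/p-1/2)$, which is available for some $\beta<s$ under the hypothesis on $s$. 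So the bookkeeping that consumes the $Q(1/p-1/2)$ loss is sound.

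The genuine gap is your second step, the weighted $L^2$ estimate $\int_G |x|^{2\beta}|Y^{I}k(x)|^2\,dx\lesssim \|m\|_{H^{\beta}_2}^2$, which you justify by saying that the weight $|x|^{\alpha}$ can be ``traded'' for fractional derivatives of $m$ and that ``$Y^{I}$ corresponds to multiplication by symbols of order $|I|$ in $\sqrt{L}$.'' Neither heuristic is literally available on a stratified group: there is no scalar Fourier correspondence making multiplication by $|x|$ dual to $d/d\lambda$, and $Y^{I}k$ is not the convolution kernel of any spectral multiplier of $L$ (left-invariant derivatives do not commute with the functional calculus in that way), so the claim cannot be reduced to a Plancherel computation plus a Littlewood--Paley expansion of $m$ with a geometric series in the frequency parameter. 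This trade of spatial weight for Sobolev smoothness of $m$ is precisely the hard content of \cite{MM} (and of \cite{C1}); it is obtained there by writing $m(\lambda)=M(\lambda)e^{-\lambda}$, expanding $M$ by the Euclidean Fourier transform so that $m(L)=c\int \widehat{M}(t)\,e^{-(1-it)L}\,dt$, and using weighted $L^2$ bounds, polynomial in $t$, for the complex-time heat kernels $h_{1-it}$ and their derivatives, followed by Cauchy--Schwarz in $t$; the norm $\|m\|_{H^{s}_2}$ enters through $\int(1+|t|)^{2s}|\widehat{M}(t)|^2\,dt$. (The paper's Lemma \ref{HigherKernel} uses a small-scale version of the same device, $k=k_1*h$, exactly because $Y^{I}$ cannot be pushed onto the multiplier.) Without an argument of this kind, your step two asserts the lemma's essential difficulty rather than proving it.
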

 We shall use this lemma with $p=2$, that is, 
\begin{eqnarray}\label{m2}
\int_G |x|^{\alpha} |Y^{I}k(x)|^2 dx \leq C \| m \|_{H_2^{s}}, \quad s> \frac{\alpha}{2}.
\end{eqnarray}
However, in order to bound a multiplier pointwisely by Hardy-Littlewood maximal function, we derive a version of \eqref{MME} with  $p> 2$.
\begin{lem}\label{HigherKernel} Suppose that $m$ is a function in $H_2^{s}(\mathbb{R}_+)$ supported in $(1/2,2)$ with $ s> \alpha /2$. Let $k$ be the distribution kernel of $m(L)$. Then, for any index $I$ we have
\begin{eqnarray}\label{minfty}
\sup_{x \in G} (1+|x|)^{\frac{\alpha}{2}} |X^{I} k(x)| \lesssim \| m\|_{H_2^{s}}.
\end{eqnarray}
and 
\begin{eqnarray}\label{ml2}
\int_G |x|^{\alpha \frac{q}{2}} |Y^{I} k(x)|^{q} dx \lesssim \|m\|_{H_2^{s}}^{q}.
\end{eqnarray}
for each $q >2$.
\end{lem}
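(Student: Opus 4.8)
The two estimates in Lemma~\ref{HigherKernel} should follow from the $p=2$ estimate \eqref{m2} by a Sobolev-type embedding argument, using the heat semigroup to control higher $L^q$ integrability. Let me sketch how I would organize this.

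For the $L^\infty$ bound \eqref{minfty}: fix a multi-index $I$. The idea is that $X^I k$ is again (essentially) the kernel of a multiplier of the same type. Indeed, since $m$ is supported in $(1/2,2)$, we may write $m(\lambda)=m(\lambda)\widetilde{m}(\lambda)$ where $\widetilde{m}\in C_c^\infty$ equals $1$ on $(1/2,2)$ and is supported in $(1/4,4)$. Then $X^I(m(L)) = X^I(\widetilde{m}(L))\circ m(L)$ in an appropriate sense, but more directly: $X^I k = X^I(\widetilde k) * k$ where $\widetilde k$ is the kernel of $\widetilde m(L)$; since $\widetilde m(L)$ has a Schwartz-class kernel (it is a fixed cutoff applied to $L$, so $X^I\widetilde k\in L^1\cap L^2$ with rapid decay — this is standard, e.g.\ from \cite{FS2}), we get $X^I k = (X^I\widetilde k)*k$. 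To pass from an $L^2$ bound on $k$ weighted by $|x|^\alpha$ to a pointwise bound on $X^Ik$, I would exploit the convolution structure together with the left-invariance of $X^I$. Concretely, writing $u(x)=(1+|x|)^{\alpha/2}$, one has a product-type inequality $u(xy)\lesssim u(x)u(y)$ for homogeneous norms (up to the doubling constant), so
\begin{eqnarray*}
(1+|x|)^{\alpha/2}|X^Ik(x)| \leq \int_G (1+|y^{-1}x|)^{\alpha/2}\,|(X^I\widetilde k)(y^{-1}x)|\,(1+|y|)^{\alpha/2}|k(y)|\,(1+|y|)^{-\alpha/2}\,dy
\end{eqnarray*}
(modulo placing the weight correctly; one uses $1+|x|\lesssim (1+|y|)(1+|y^{-1}x|)$). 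By Cauchy--Schwarz this is bounded by $\|(1+|\cdot|)^{\alpha/2}X^I\widetilde k\|_2\cdot\| |\cdot|^{\alpha/2}k\|_2$ times $\|(1+|\cdot|)^{-\alpha}\|$ factors, provided $\alpha/2$ is large enough for the weight to be integrable — but since we only need $s>\alpha/2$ for any given $\alpha$, we can first reduce to the case of large $\alpha$ (monotonicity in $\alpha$ of the left side on bounded sets, combined with the trivial $L^\infty$ bound on $X^Ik$ from the $L^2$ theory). The first factor is finite because $\widetilde m$ is smooth and fixed; the second is $\lesssim\|m\|_{H_2^s}$ by \eqref{m2} (enlarging $s$ slightly is harmless since the hypothesis is an open condition $s>\alpha/2$).

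For the weighted $L^q$ bound \eqref{ml2} with $q>2$: this is interpolation between \eqref{m2} ($q=2$ case, giving $\int |x|^\alpha|Y^Ik|^2\lesssim\|m\|_{H_2^s}^2$ with $s>\alpha/2$) and the pointwise bound \eqref{minfty} just obtained (the ``$q=\infty$'' case). Writing $\int_G |x|^{\alpha q/2}|Y^Ik(x)|^q\,dx = \int_G |x|^\alpha|Y^Ik|^2\cdot\big(|x|^{\alpha/2}|Y^Ik|\big)^{q-2}\,dx$ and pulling out $\sup_x |x|^{\alpha/2}|Y^Ik(x)|$ from the second factor (after checking left- vs.\ right-invariant derivatives can be interchanged up to lower-order left-invariant terms, which are handled the same way — or simply proving \eqref{minfty} for $Y^I$ as well by the identical argument), we get
\begin{eqnarray*}
\int_G |x|^{\alpha q/2}|Y^Ik(x)|^q\,dx \leq \Big(\sup_{x}|x|^{\alpha/2}|Y^Ik(x)|\Big)^{q-2}\int_G |x|^\alpha|Y^Ik(x)|^2\,dx \lesssim \|m\|_{H_2^s}^{q}.
\end{eqnarray*}

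**Main obstacle.** The delicate point is \eqref{minfty}: turning a weighted $L^2$ estimate into a weighted $L^\infty$ estimate on derivatives, and in particular handling the derivatives $Y^I$ (right-invariant) as opposed to $X^I$ (left-invariant), since on a non-abelian group these differ and only one of them interacts cleanly with the convolution $m(L)f=f*k$. I expect to resolve this by the factorization $k=\widetilde k * k'$ (or $k=k'*\widetilde k$, choosing the order to match the side on which the derivative acts), using that the auxiliary cutoff kernel $\widetilde k$ is Schwartz with all its derivatives rapidly decaying — a fact available from the Folland--Stein calculus — so that all the ``smoothing'' is supplied by the fixed cutoff while the ``size'' is supplied by the hypothesis on $m$ through \eqref{m2}. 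The reduction to large $\alpha$ (to make the auxiliary weights integrable in the Cauchy--Schwarz step) is a minor technical wrinkle but worth stating explicitly.
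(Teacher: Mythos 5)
Your proposal is correct and follows essentially the same route as the paper: the paper factors $m(\lambda)=e^{-\lambda}\cdot\bigl(e^{\lambda}m(\lambda)\bigr)$ so that $k=h*K_1=K_1*h$ with $h$ the Gaussian-decaying heat kernel, places $X^I$ (resp.\ $Y^I$) on the heat-kernel factor, and applies Cauchy--Schwarz against \eqref{m2}, which is exactly your argument with the smooth cutoff $\widetilde m$ playing the role of $e^{-\lambda}$; the $q>2$ case is the identical H\"older interpolation between \eqref{m2} and \eqref{minfty}. (Your worry about reducing to large $\alpha$ to make an auxiliary weight integrable is unnecessary: the two-factor Cauchy--Schwarz with the weights $(1+|y|)^{\alpha/2}$ and $(1+|y^{-1}x|)^{\alpha/2}$ closes directly, both factors being finite.)
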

\begin{proof}
Set $m_1 (\lambda) = e^{\lambda} m(\lambda)$ and  $K_1$ be the distribution kernel of $m_1(L)$. Because the support of $m$ is contained in $(1/2,2)$, the $H^{s}_2$ norms of $M$ and $M_1$ are comparable. Since $m(L) = e^{-L} m_1 (L) = m_1(L) e^{-L}$ we have $K = h * K_1 = K_1 * h $. Hence $K$ is $C^{\infty}$ and 
\begin{eqnarray}\label{xik}
X^{I} K = K_1 * X^{I} h, \qquad Y^{I} K = Y^{I} h * K_1.
\end{eqnarray}
Since $h \in S(G)$ we have $\| |x|^{N} X^{I} h(x) \|_{L^2} \lesssim 1$ for any $N>0$. Thus using \eqref{m2}, \eqref{xik} and a triangle inequaltiy we get,
\begin{eqnarray*}
(1 + |x|)^{\alpha/2} |X^{I} K(x)| &\lesssim&  \int (1 + |y|)^{\alpha/2} |K_1 (y) | (1+ |y^{-1}x|)^{\alpha/2} |X^{I} h (y^{-1}x,1)| dy
\\
&\lesssim & \left(\int (1+|y|)^{\alpha} |K_1 (y)|^{2} dy \right)^{1/2}\left( \| |x|^{\alpha/2} X^{I} h(x) \|_{L^2}\right)
\\
&\lesssim & \| m\|_{H_2^{s}}.
\end{eqnarray*}
It proves \eqref{minfty}. For $q>2$, using this bound with \eqref{m2} and H\"older's inequality, we obtain\begin{eqnarray*}
\int_G |x|^{\alpha \frac{q}{2}} |Y^I k(x)|^q dx &\lesssim& \sup_{x\in G} |x|^{\alpha \frac{q-2}{2}} |Y^I k(x)|^{q-2} \int_G |x|^{\alpha} |Y^{I} k(x)|^2 dx .
\\
&\lesssim& \|m\|_{H_2^{s}}^{q-2} \|m\|_{H_2^{s}}^2 = \| m \|_{H_2^{s}}^{q}.
\end{eqnarray*}
This yields the asserted bound \eqref{ml2}.
\end{proof}
 In order to split the spectrum of a multiplier dyadically, we choose a bump function $\phi \in C^{\infty}$ supported on $[\frac{1}{2}, 2]$ satisfying $\sum_{j \in \mathbb{Z}} \phi (2^j \xi) =1 $ for all $ \xi \in \mathbb{R}^{+}$. Then, letting $m_j (\xi) = \phi (2^{-j} \xi) m(\xi)$ we have
\begin{eqnarray*}
m(L) = \sum_{j \in \mathbb{Z}} m_j (L).
\end{eqnarray*}
We set  $\tilde{m}_k (s) := m_k (2^k s)$ and the maximal function of higher order, $M_r f(x) = \left(M(|f|^r)(x)\right)^{1/r}$ for $r>1$. Then we have the following lemma.
\begin{lem}\label{tkfx}
$| m_k (L) f (x) | \lesssim M_r f (x) \cdot \| \tilde{m}_k \|_{H^{s}}, \quad s > Q/r$, \quad $ r \leq 2$.
\end{lem}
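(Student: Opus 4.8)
The plan is to reduce the pointwise bound to the kernel estimate \eqref{ml2} of Lemma \ref{HigherKernel} and then dyadically decompose the kernel into annuli and apply H\"older's inequality on each annulus. By Lemma \ref{dilation}, the distribution kernel of $m_k(L)$ is $(k_{\tilde m_k})_{(\sqrt{2^k})}$, where $k_{\tilde m_k}$ is the kernel of $\tilde m_k(L)$; since $\tilde m_k$ is supported in $(1/2,2)$, Lemma \ref{HigherKernel} applies to $k_{\tilde m_k}$. Thus it suffices to prove the estimate for a fixed kernel $k$ of a multiplier supported in $(1/2,2)$, namely $|k*f(x)| \lesssim M_r f(x)\,\|m\|_{H^s_2}$ with $s>Q/r$, and then let the dilation-invariance of the Hardy--Littlewood maximal operator (up to the homogeneity of the measure) transfer the bound back to $m_k(L)$.

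For the fixed-scale estimate, I would write $k*f(x) = \int_G k(y)\, f(y^{-1}x)\,dy$ and split the domain of integration into the unit ball $B(0,1)$ and the dyadic annuli $A_j = \{ 2^{j-1} \le |y| < 2^j\}$ for $j \ge 1$. On each piece apply H\"older with exponents $q$ and $q'=q/(q-1)$:
\begin{eqnarray*}
\Big| \int_{A_j} k(y) f(y^{-1}x)\, dy \Big| \le \Big( \int_{A_j} |k(y)|^q dy \Big)^{1/q} \Big( \int_{A_j} |f(y^{-1}x)|^{q'} dy \Big)^{1/q'}.
\end{eqnarray*}
The second factor is controlled by $|A_j|^{1/q'} \big( |A_j|^{-1} \int_{B(0,2^j)} |f(y^{-1}x)|^{q'} dy \big)^{1/q'} \lesssim 2^{jQ/q'} M_{q'}f(x)$, using $|A_j| \sim 2^{jQ}$ and left-invariance of Haar measure. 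For the first factor I use \eqref{ml2}: choosing $\alpha$ with $Q/r < \alpha/2 < s$ (possible since $s>Q/r$ means we may pick $\alpha$ slightly above $2Q/r$ but still below $2s$), we get $\int_{A_j} |k(y)|^q dy \le 2^{-j\alpha q/2} \int_G |y|^{\alpha q/2} |k(y)|^q dy \lesssim 2^{-j\alpha q/2}\|m\|_{H^s_2}^q$. Combining, the $A_j$-term is bounded by $2^{j(Q/q' - \alpha/2)} M_{q'}f(x)\,\|m\|_{H^s_2}$, and the unit ball term is handled identically (or directly via \eqref{minfty}), so summing the geometric series in $j$ requires $Q/q' < \alpha/2$. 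Finally I set $q' = r$, i.e. $q = r/(r-1)$, which is $>2$ precisely when $r<2$; then the summability condition becomes $Q/r < \alpha/2 < s$, which is exactly the hypothesis $s>Q/r$ (with $\alpha$ chosen in the open interval $(2Q/r, 2s)$), and the bound $M_{q'}f = M_r f$ is what the statement asks for.

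The main obstacle is the bookkeeping at the endpoint: one needs the strict inequality $s > Q/r$ to leave room to choose $\alpha$ strictly between $2Q/r$ and $2s$, so that both the geometric sum in $j$ converges and Lemma \ref{HigherKernel} is applicable; the case $r=2$ is excluded because then $q=2$ and \eqref{ml2} (which requires $q>2$) is unavailable, so one would instead invoke \eqref{m2} directly, but the statement only claims $r\le 2$ with the understanding that $r=2$ is the limiting Mauceri--Meda case already covered by Lemma \ref{MME2}. A secondary point to check carefully is that the dilation in Lemma \ref{dilation} interacts correctly with $\|\tilde m_k\|_{H^s_2}$ and with the scaling of $M_r$, but since both $M_r$ and the class of kernel estimates are dilation-covariant on the homogeneous group $G$, this is routine.
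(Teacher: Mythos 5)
Your proposal is correct and follows essentially the same route as the paper: reduce to the unit scale via Lemma \ref{dilation}, decompose the kernel over dyadic annuli, apply H\"older with exponents $r'$ and $r$ on each annulus using the weighted $L^{r'}$ bound \eqref{ml2} with $\alpha$ chosen in $(2Q/r,2s)$, and sum the resulting geometric series. The paper's proof keeps the dilation explicit inside the integral rather than transferring first, but this is only a cosmetic difference; your remark on the exclusion of $r=2$ (where $r'=2$ and \eqref{ml2} is unavailable) is also consistent with the paper, which tacitly uses $r'>2$.
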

\begin{proof} Denote by $K_k$ (resp. $\tilde{K}_k$) the kernel of the operator $m_k (L)$(resp. $\tilde{m}_k (L)$). It follows from Lemma \ref{dilation} that 
\begin{eqnarray*}
\tilde{K}_k (x) = (K_k)_{\sqrt{2^k}}(x) = 2^{-kQ/2} K_k (\frac{x}{2^{k/2}}).
\end{eqnarray*}
Since $r'>2$ we get from Lemma \ref{HigherKernel} that
\begin{eqnarray}\label{partition}
\int_G |x|^{\alpha \frac{r'}{2}} |\tilde{K}_k (x)|^{r'} dx \lesssim \| \tilde{m}_k\|_{H^{s}}^{r'}, \quad \textrm{for all} ~0\leq \alpha < 2s.
\end{eqnarray}  
Set $\tilde{K}_{k,l} (x) = \tilde{K}_k (x) \cdot 1_{\{2^{l-1} \leq |x| < 2^l\}}$ for $l \in \mathbb{N}$ and $\tilde{K}_{k,0}(x) = \tilde{K}_k (x) \cdot 1_{\{ |x| < 1\}}$. Then, it follows from \eqref{partition} that 
\begin{eqnarray}\label{alpha}
\sup_{l\geq 0} 2^{l \alpha \frac{r'}{2}} \int |\tilde{K}_{k,l}(x)|^{r'} dx \lesssim \| \tilde{m}_k\|_{H^{s}}^{r'} \quad \textrm{for} \quad 0 \leq \alpha < 2s.
\end{eqnarray} 
Since $\frac{2Q}{r} <2s$ we can choose a small $\epsilon >0$ and take $\alpha$ so that $\alpha =\frac{2Q}{r}+\epsilon < 2s$. We then deduce the following estimate
\begin{eqnarray*}
|m_k (L) f (x)| &=&  \left| \int_G 2^{kQ/2} \tilde{K}_k (2^{k/2} y) f(x y^{-1}) dy\right|
\\
 &=&\left| \sum_{l=0}^{\infty} \int_G 2^{kQ/2} \tilde{K}_{k,l} (2^{k/2} y) f (x y^{-1}) dy \right|
\\
&\lesssim & \sum_{l=0}^{\infty} \left( \int_G 2^{kQ/2} |\tilde{K}_{k,l}(2^{k/2} (y))|^{r'} dy\right)^{1/r'} \left(2^{kQ/2}\int_{|y| \leq 2^{l -k/2}} |f(x y^{-1})|^{r} dy\right)^{1/r}
\\
&\lesssim& \sum_{l=0}^{\infty} 2^{lQ/r}( M (|f|^r)(x))^{1/r} \left( \int_G |\tilde{K}_{k,l}(y)|^{r'} dy \right)^{1/r'}  
\\
& \lesssim &  \| \tilde{m}_k \|_{H^s} ~\sum_{l=0}^{\infty} 2^{lQ/r} 2^{-l\alpha /2 } (M (|f|^r ) (x))^{1/r}
\\
&\lesssim & \|\tilde{m}_k\|_{H^{s}} \sum_{l=0}^{\infty} 2^{-\frac{l \epsilon}{2}} (M(|f|^r)(x))^{1/r}
\\
&\lesssim & \| \tilde{m}_k\|_{H^{s}} (M(|f|^r)(x))^{1/r}.
\end{eqnarray*}
This proves the lemma.
\end{proof}
\section{Martingales on homogeneous space}
In this section we shall recalll the martingales on homogeneous space and exploit the cancellation property arising when we compose the martingale operators and Littlewood-Paley projections.
\
In what fllows, open set $Q_{\alpha}^{k}$ will  role as dyadic cubes of side-lengths $2^{-k}$ (or more precisely, $\delta^{k}$) with the two conventions: 1. For each $k$, the index $\alpha$ will run over some unspecified index set dependent on $k$. 2. For two sets with $Q_{\alpha}^{k+1} \subset Q_{\beta}^{k}$, we say that $Q_{\beta}^{k}$ is a parent of $Q_{\alpha}^{k+1}$, and $Q_{\alpha}^{k+1}$ a child of $Q_{\beta}^{k}$. 
\
\begin{thm}[Theorem 14 in \cite{C2}]\label{moh} Let $X$ be a space of homogeneous type. Then there exists a family of subset $Q_{\alpha}^{k} \subset X$, defined for all integers $k$, and constants $\delta, \epsilon > 0, C < \infty$ such that
\begin{itemize}
\item[-] $\mu ( X \setminus \cup_{\alpha} Q_{\alpha}^{k} ) = 0 ~\forall k$
\item[-] for any $\alpha, \beta, k, l$ with $l \geq k$, either $Q_{\beta}^{l} \subset Q_{\alpha}^{k} $ or $Q_{\beta}^{l} \cap Q_{\alpha}^{k} = {\O}$
\item[-] each $Q_{\alpha}^{k}$ has exactly one parent for all $k \geq 1$
\item[-] each $Q_{\alpha}^{k}$ has at least one child
\item[-] if $Q_{\alpha}^{k+1} \subset Q_{\beta}^{k}$ then $\mu(Q_{\alpha}^{k+1}) \geq \epsilon \mu (Q_{\beta}^{k})$
\item[-] for each $(\alpha, k)$ there exists $x_{\alpha,k} \in X$ such that $B(x_{\alpha,k}, \delta^{k}) \subset Q_{\alpha}^{k} \subset B(x_{\alpha,k}, C \delta^{k})$.
\end{itemize}
Moreover,
\begin{eqnarray}\label{muyq}
\mu\{ y \in Q_{\alpha}^{k}: \rho (y, X \setminus Q_{\alpha}^{k}) \leq t \delta^k \} \leq C t^{\epsilon} \mu(Q_{\alpha}^k) ~for ~ 0 < t \leq 1, ~ for~all ~\alpha, k.
\end{eqnarray}
\end{thm}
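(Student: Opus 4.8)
The plan is to carry out the construction of a Euclidean-type dyadic decomposition on the space of homogeneous type $(X,\rho,\mu)$, extracting from the quasi-metric and the doubling property alone a family that reproduces the nesting, measure-comparability and thin-boundary features of dyadic cubes in $\mathbb{R}^n$. First I would fix a small parameter $\delta\in(0,1)$, its size to be chosen later in terms of the doubling constant of $\mu$, and for every $k\in\mathbb{Z}$ select (by a standard maximality argument) a maximal $\delta^k$-separated set of points $\{z_\alpha^k\}_\alpha$, so that $\rho(z_\alpha^k,z_\beta^k)\ge\delta^k$ for $\alpha\ne\beta$. Maximality forces the covering $X=\bigcup_\alpha B(z_\alpha^k,\delta^k)$, while the separation makes the balls $B(z_\alpha^k,\delta^k/2)$ pairwise disjoint; the doubling property then supplies the two quantitative facts used throughout: for fixed $k$ and fixed $x$ only boundedly many $z_\alpha^k$ lie within a given multiple of $\delta^k$ of $x$, and each center at level $k$ has only boundedly many centers of level $k+1$ in its vicinity.

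Next I would organize the centers into a tree across scales. For each level-$(k+1)$ center $z_\alpha^{k+1}$ pick, using the covering property of the level-$k$ net, one index $\pi(\alpha)$ with $\rho(z_\alpha^{k+1},z_{\pi(\alpha)}^k)<\delta^k$, call $z_{\pi(\alpha)}^k$ its parent (the choice being arranged so that every center of level $k$ is selected as parent by at least one center of level $k+1$, which guarantees that every cube will have a child), and pass to the transitive closure to obtain a partial order $\preceq$ in which every node has exactly one parent and hence a well-defined ancestor at every coarser scale. Iterating the inequality $\rho(z_\alpha^{k+1},z_{\pi(\alpha)}^k)<\delta^k$ along such a chain and summing the geometric series $\sum_{m\ge k}\delta^m$ shows that every level-$l$ descendant $z_\beta^l$ of a level-$k$ center $z_\alpha^k$ stays within $\delta^k/(1-\delta)$ of $z_\alpha^k$. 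Using the tree I would then define the cube $Q_\alpha^k$, roughly, as the union over all descendants $(l,\beta)\preceq(k,\alpha)$ of the Voronoi-type cells of the corresponding centers at level $l$ (ties between centers broken once and for all by a fixed well-ordering of indices). Because the descendant set of $(l,\beta)$ is contained in that of $(k,\alpha)$ whenever $(l,\beta)\preceq(k,\alpha)$, the resulting sets are automatically nested; because the level-$l$ cells partition $X$, covering and same-level disjointness are built in at each single scale; and the geometric-series bound above gives immediately the outer inclusion $Q_\alpha^k\subset B(z_\alpha^k,C\delta^k)$, while a short separation argument gives an inner inclusion $B(z_\alpha^k,\delta^k)\subset Q_\alpha^k$ — after which one replaces $Q_\alpha^k$ by its interior to meet the openness requirement. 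The measure comparability $\mu(Q_\alpha^k)\approx\mu(B(z_\alpha^k,\delta^k))$, the child bound $\mu(Q_\alpha^{k+1})\ge\epsilon\,\mu(Q_\beta^k)$ for $Q_\alpha^{k+1}\subset Q_\beta^k$, and the existence of a child then all follow from these inclusions together with doubling.

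The delicate point — and the step I expect to be the main obstacle — is that the cells at different levels do not nest exactly: a point lying near an interface between two level-$l$ cells can be captured in a way inconsistent with the level-$k$ cube dictated by the tree, so the naive union above fails to be simultaneously nested and an honest partition. The remedy is to resolve these conflicts on an exceptional set: reassign each ambiguous point to the unique level-$k$ cube that is nesting-compatible with all of its finer-scale memberships (and arbitrarily among the remaining candidates otherwise), and then show that the reassigned set — a $\delta^{k'}$-neighborhood of the union of interfaces over all levels $k'\ge k$ — has $\mu$-measure zero, so that $\mu(X\setminus\bigcup_\alpha Q_\alpha^k)=0$ and the inner inclusion survives after shrinking the constant slightly. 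This is precisely where the smallness of $\delta$ relative to the doubling constant enters, and the cleanest way to organize it is to fix all of the combinatorics on the net of centers first and only afterwards dispose of the measure-theoretically negligible ambiguity along the interfaces.

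Finally I would derive the thin-boundary estimate \eqref{muyq} from the same bookkeeping. If $y\in Q_\alpha^k$ satisfies $\rho(y,X\setminus Q_\alpha^k)\le t\delta^k$, then at the scale $k'$ with $\delta^{k'}\approx t\delta^k$ the level-$k'$ cube containing $y$ must abut the complement of $Q_\alpha^k$; counting the level-$k'$ subcubes of $Q_\alpha^k$ that touch its boundary against the total number of level-$k'$ subcubes of $Q_\alpha^k$ — using the doubling property, the measure comparability $\mu(Q_\beta^{k'})\approx\mu(B(z_\beta^{k'},\delta^{k'}))$, and the child lower bound iterated from level $k$ down to level $k'$ — yields a bound of the form $C\,t^\epsilon\,\mu(Q_\alpha^k)$ with $\epsilon>0$ depending only on the doubling constant, which is the asserted inequality.
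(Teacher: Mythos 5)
First, note that the paper itself offers no proof of this statement: it is imported verbatim as Theorem 14 of Christ's CBMS notes \cite{C2}, so your sketch can only be measured against the standard construction. Your skeleton (maximal $\delta^k$-separated nets at each scale, a parent map producing a tree, the geometric series $\sum_{m\ge k}\delta^m$ for the outer inclusion, doubling for the measure comparabilities and the child bound) is indeed the right one. But the two genuinely hard points of the theorem are exactly the ones you dispose of with a gesture, and in both cases the gesture points at the wrong mechanism. The first is the inner inclusion $B(x_{\alpha,k},\delta^k)\subset Q_\alpha^k$: if the parent $\pi(\alpha)$ of a level-$(k+1)$ center is chosen arbitrarily among the level-$k$ centers within $\delta^k$, then a level-$(k+1)$ center sitting essentially on top of $z_\alpha^k$ may be adopted by a different center $z_{\alpha'}^k$ at distance close to $\delta^k$, and points arbitrarily close to $z_\alpha^k$ then land in $Q_{\alpha'}^k$; no ``short separation argument'' recovers the inner ball afterwards. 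The parent map must be constrained to prefer very close centers (e.g.\ $(l,\beta)\preceq(k,\alpha)$ is forced whenever $\rho(z_\beta^l,z_\alpha^k)<\delta^k/2$), and one must check this is consistent with each node having exactly one parent.

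The second and more serious gap is \eqref{muyq}, which is the heart of the theorem and is also what your third paragraph secretly needs: the claim that the ``ambiguous'' interface set is $\mu$-null is itself a consequence of the thin-boundary estimate (let $t\to 0$), so you cannot first discard the interfaces as negligible and only afterwards prove \eqref{muyq}. Moreover, your proposed proof of \eqref{muyq} --- counting the level-$k'$ subcubes that touch the boundary against the total number of level-$k'$ subcubes --- is the Euclidean picture and does not transfer: in a general space of homogeneous type siblings at a given level need not have comparable measures and there is no dimension making the boundary layer ``lower-dimensional,'' so a cardinality count yields nothing. The actual mechanism is an iteration over generations: every cube possesses a distinguished ``deep'' child whose inner ball lies at distance $\gtrsim\delta^{k+1}$ from the complement of its parent and which carries a fixed proportion $\eta$ of the parent's measure; since such points are a fortiori at the same distance from $X\setminus Q_\alpha^k$, the measure of the layer $\{y:\rho(y,X\setminus Q_\alpha^k)\le\delta^{k+j}\}$ decays like $(1-\eta)^{j/c}$, which is $t^{\epsilon}$ once $\delta^{k+j}\approx t\delta^k$. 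The existence of the deep child is again exactly where the constrained parent assignment enters. Until these two points are supplied, the proposal is a correct outline of where the proof lives, not a proof.
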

Now we can define the expectation operator
\begin{eqnarray*}
\mathbb{E}_k f (x) = \mu(Q_{\alpha}^{k})^{-1} \int_{Q_{\alpha}^{k}} f d\mu \quad \textrm{for} ~ x \in Q_{\alpha}^{k},
\end{eqnarray*}
and  the martingale operator $ \mathbb{D}_k f (x) = \mathbb{E}_{k+1} f (x) - \mathbb{E}_k f (x).$ We set $S(f)$ be the square function: 
\begin{eqnarray*}
S (f) = (\sum_{k \geq 1} | \mathbb{D}_k f(x)|^2)^{1/2}.
\end{eqnarray*}
 Let us recall the good $\lambda$ inequality.
\begin{lem}[\cite{CWW} Corollary 3.1]  There is a constant $c_d > 0$ so that for all $\lambda >0, ~ 0 < \epsilon < \frac{1}{2}$, the following inequality holds.
\begin{eqnarray}\label{measx}
\textrm{meas} ( \{ x : \sup_{k \geq 1} | \mathbb{E}_k g(x) - \mathbb{E} g(x)| > 2\lambda, S(g) < \epsilon \lambda \}) 
\\
\leq C \exp( - \frac{C_d}{\epsilon^2}) \textrm{meas} ( \{ x : \sup_{k \geq 1} | \mathbb{E}_k g(x)| > \lambda \}) ;
\end{eqnarray}
\end{lem}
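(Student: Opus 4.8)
The plan is to combine a stopping-time decomposition of $\{f^{*}>\lambda\}$ with the classical subgaussian tail bound for a dyadic martingale whose square function is pointwise small. Write $f_{k}=\mathbb{E}_{k}g$ and $d_{k}=\mathbb{D}_{k}g=f_{k+1}-f_{k}$; replacing $g$ by $g-\mathbb{E}g$ we may assume $\mathbb{E}g=0$, so $(f_{k})$ is a dyadic martingale for the Christ cubes of Theorem~\ref{moh}, started from $0$, with maximal function $f^{*}=\sup_{k}|f_{k}|=\sup_{k}|\mathbb{E}_{k}g-\mathbb{E}g|$ and $S(g)^{2}=\sum_{k}|d_{k}|^{2}$. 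I would first record the elementary observation that, since $|d_{j}|^{2}\le S(g)^{2}$ pointwise and $d_{j}$ is constant on every cube $Q_{\alpha}^{j+1}$, the bound $S(g)(x)<t$ at one single point $x$ of such a cube forces $|d_{j}|<t$ on the whole cube; this is what will make the truncation below harmless on the relevant set.

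The analytic core is the \emph{martingale exponential estimate}: if $u=(u_{k})$ is a dyadic martingale on a probability space with $u_{0}=0$ and $S(u)\le s$ everywhere, then $\mu(u^{*}>\alpha)\le Ce^{-c\alpha^{2}/s^{2}}$. I would obtain this from Doob's inequality together with Burkholder's inequality in the sharp form $\|u_{n}\|_{p}\le C\sqrt{p}\,\|S(u)\|_{p}$ for $p\ge 2$: on a probability space $\|S(u)\|_{p}\le s$, so $\|u^{*}\|_{p}\le C\sqrt{p}\,s$, and then Chebyshev together with the choice $p\sim\alpha^{2}/s^{2}$ gives the exponential bound (for $\alpha\lesssim s$ the inequality is trivial since the measure is at most $1$). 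This is essentially the content of \cite{CWW}; the one delicate point, which is precisely why one passes through the $L^{p}$ inequalities rather than through a direct estimate of $\mathbb{E}[e^{tu_{n}}]$, is that pointwise control of the square function $\sum d_{k}^{2}$ does not give pointwise control of the conditional square function $\sum\mathbb{E}[d_{k}^{2}\mid\mathcal{F}_{k}]$.

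Next I would perform the stopping-time decomposition. Set $\Omega=\{f^{*}>\lambda\}$ and for $x\in\Omega$ let $\tau(x)$ be the least $k$ with $|f_{k}(x)|>\lambda$; then, up to a null set, $\Omega$ is the disjoint union of the maximal Christ cubes $Q_{j}=Q_{\alpha_{j}}^{k_{j}}$ on which $|f_{k_{j}}|>\lambda$, and the parent of $Q_{j}$ satisfies $|f_{k_{j}-1}|\le\lambda$. Fix such a $Q_{j}$. On $Q_{j}$, equipped with the normalized measure and the filtration of its subcubes, consider the localized martingale $h_{i}(x)=f_{k_{j}+i}(x)-f_{k_{j}}(x)$ for $i\ge 0$, the stopping time $\sigma(x)=\min\{i:\sum_{\ell=0}^{i}|d_{k_{j}+\ell}(x)|^{2}>(\epsilon\lambda)^{2}\}$, and the truncated martingale $\widetilde{h}_{i}=h_{i\wedge\sigma}$; by construction $S(\widetilde{h})\le\epsilon\lambda$ everywhere on $Q_{j}$.

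Finally I would glue the pieces. Suppose $x\in Q_{j}$ with $f^{*}(x)>2\lambda$ and $S(g)(x)<\epsilon\lambda$. By the elementary observation, $|d_{k_{j}-1}|\le\epsilon\lambda$ on all of $Q_{j}$, so $|f_{k_{j}}|\le(1+\epsilon)\lambda$ on $Q_{j}$ by the parent bound; hence the supremum defining $f^{*}(x)$ is attained at a level $>k_{j}$ and $h^{*}(x)>2\lambda-(1+\epsilon)\lambda=(1-\epsilon)\lambda>\lambda/2$. Moreover $\sum_{\ell}|d_{k_{j}+\ell}(x)|^{2}\le S(g)(x)^{2}<(\epsilon\lambda)^{2}$, so the truncation is inactive at $x$, i.e. $\widetilde{h}(x)\equiv h(x)$ and $\widetilde{h}^{*}(x)>\lambda/2$. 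Thus the set in \eqref{measx}, intersected with $Q_{j}$, lies in $\{x\in Q_{j}:\widetilde{h}^{*}(x)>\lambda/2\}$, whose normalized measure is at most $Ce^{-c(\lambda/2)^{2}/(\epsilon\lambda)^{2}}=Ce^{-C_{d}/\epsilon^{2}}$ by the martingale exponential estimate. Multiplying by $\mu(Q_{j})$ and summing over the disjoint $Q_{j}$ yields $\mu(\{f^{*}>2\lambda,\,S(g)<\epsilon\lambda\})\le Ce^{-C_{d}/\epsilon^{2}}\sum_{j}\mu(Q_{j})=Ce^{-C_{d}/\epsilon^{2}}\mu(\{f^{*}>\lambda\})$, which is \eqref{measx}. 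The main obstacle is the martingale exponential estimate of the second paragraph --- extracting subgaussian decay from pointwise control of the square function alone, where the $\sqrt{p}$ growth of the Burkholder constant is what makes the optimization in $p$ work; everything else is routine Calder\'on--Zygmund stopping-time bookkeeping.
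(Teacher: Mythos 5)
First, note that the paper does not prove this lemma at all: it is imported from \cite{CWW} with the remark that the argument transfers to the Christ cubes of Theorem \ref{moh}, so the relevant comparison is with the Chang--Wilson--Wolff argument (and with the paper's own product-space analogue, Lemma \ref{for2m}, which follows the same template). Your stopping-time decomposition of $\{f^*>\lambda\}$ into maximal Christ cubes, the observation that $d_{k_j-1}$ is constant on $Q_j$, and the final gluing are all correct and are exactly the standard bookkeeping. The gap is in the middle step, where you truncate the localized martingale by the square function. With $\sigma(x)=\min\{i:\sum_{\ell=0}^{i}|d_{k_j+\ell}(x)|^2>(\epsilon\lambda)^2\}$, the event $\{\sigma\le i\}$ depends on $d_{k_j+i}$, which is measurable only with respect to $\mathcal{F}_{k_j+i+1}$, not $\mathcal{F}_{k_j+i}$; so $\sigma$ is not a stopping time and $\widetilde h_i=h_{i\wedge\sigma}$ is in general \emph{not} a martingale (the increment $d_{k_j+i}\,1_{\{\sigma\ge i+1\}}$ truncates a mean-zero variable by an event involving that same variable, destroying the conditional mean-zero property). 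Hence Doob and Burkholder cannot be applied to $\widetilde h$. If you repair this by stopping predictably (first $i$ with $\sum_{\ell\le i-1}d^2>(\epsilon\lambda)^2$), you recover a genuine martingale, but then its square function can overshoot $\epsilon\lambda$ by the size of the last, uncontrolled increment. This is the classical dilemma, and it is precisely why your formulation of the ``analytic core'' (subgaussian decay under an \emph{everywhere} bound $S(u)\le s$) is the wrong intermediate statement: it forces the illegitimate truncation, and deriving the needed good-$\lambda$/joint-set version from Burkholder alone is essentially circular.

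The way \cite{CWW} (and the paper, in the proof of Lemma \ref{for2m}) avoid this is to prove the estimate in the joint form: one applies the exponential inequality $\int_{Q}\exp\bigl(t\,h_N-\tfrac{t^2}{2}S(h_N)^2\bigr)\,d\mu\le e\,|Q|$ (the analogue of Lemma \ref{lemp}), which holds for the martingale itself with no smallness hypothesis, restricts the integral to the set $\{h^*> (1-\epsilon)\lambda,\ S(h)\le\epsilon\lambda\}$, and optimizes in $t$; the maximal function is handled by stopping at the first time $|h_i|$ exceeds the level, which \emph{is} a stopping time and only decreases the square function. If you replace your second paragraph and the $\sigma$-truncation by this exponential-integral step (noting $S(h)(x)\le S(g)(x)<\epsilon\lambda$ on the set in question), the rest of your argument goes through verbatim and yields \eqref{measx}. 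A minor further point: after replacing $g$ by $g-\mathbb{E}g$ your right-hand set is $\{\sup_k|\mathbb{E}_kg-\mathbb{E}g|>\lambda\}$ rather than $\{\sup_k|\mathbb{E}_kg|>\lambda\}$ as in the statement; this should be addressed (or the statement read accordingly), though it is cosmetic compared with the truncation issue.
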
 
In fact, the above lemma in \cite{CWW} is written for the martingales on Euclidean space, however, the proof works for our martingales on homogeneous space as well. We choose a bump function $\psi \in C_0^{\infty}$ which is supported on $[\frac{1}{4},4]$ and equal to 1 on $[\frac{1}{2}, 2 ]$. Set $\psi_j (\xi) = \psi(2^{-j} \xi)$. Then, since the support of $m_j (\xi) = \phi(2^{-j} \xi) m(\xi)$ is contained in $[1/2,2]$, we have $m_j (\xi) = \psi_j^{2}(\xi) m_j(\xi)$. Thus we get
\begin{eqnarray*}
m_j (L) = \psi_j (L) m_j (L) \psi_j (L),
\end{eqnarray*}
and  
\begin{eqnarray}\label{Cancel}
\mathbb{D}_k (m(L) f) &=& \mathbb{D}_k ( \sum_{ j \in \mathbb{Z}} m_j (L) f) =
\sum_{j \in \mathbb{Z}} \mathbb{D}_k (\psi_j (L) m_j (L) \psi_j (L) f).
\end{eqnarray}
The multiplier $\psi_j (L)$ is usually called Littlewood-Paley projection in the literature. We shall exploit the cancellation property between the projections and the martingale operators.
\begin{lem}\label{there}~
\begin{enumerate}
\item[(i)] There exist $a>0$ such that $| \mathbb{E}_k ( \psi_n(L) f ) (x)| \lesssim 2^{(- (\log_2 \delta) k - n/2)a } M_q f (x)$ holds uniformly for $ n/2 > (-\log_2 \delta) k + 10$.
\item[(ii)] There exist $a>0$ such that $| \mathbb{D}_k (\psi_n(L)  f ) (x) | \lesssim 2^{((\log_2 \delta)k + n/2)a} M_q f(x)$ holds uniformly for  $ n/2 < (-\log_2 \delta) k - 10$. 
\end{enumerate}
From these two estimates we have 
\begin{eqnarray*}
|\mathbb{D}_k (\psi_n (L)f ) (x) | \lesssim 2^{-|(\log_2 {\delta}) k + n/2|} M_q f(x),\quad \forall (n,k) \in \mathbb{Z}^2.
\end{eqnarray*}
\end{lem}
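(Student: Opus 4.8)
I would prove the two one-sided estimates (i) and (ii) separately, since each exploits a different facet of the cancellation between the Littlewood–Paley projection $\psi_n(L)$ and the martingale building blocks $\mathbb{E}_k,\mathbb{D}_k$ attached to Christ's dyadic cubes $Q_\alpha^k$ of side-length $\delta^k$. Write $K^{(n)}$ for the convolution kernel of $\psi_n(L)$, so $\psi_n(L)f = f * K^{(n)}$. By Lemma \ref{dilation}, $K^{(n)} = (K^{(0)})_{\sqrt{2^n}}$, i.e. $K^{(n)}(x) = 2^{nQ/2}K^{(0)}(2^{n/2}x)$, and since $\psi$ is a fixed Schwartz-class bump away from the origin, Lemma \ref{HigherKernel} (or directly \eqref{m2}) gives rapid decay: $\int_G(1+|x|)^{N}|K^{(0)}(x)|^2\,dx \lesssim_N 1$ for every $N$, hence in $L^q$ for any $q$ via the argument used in the proof of Lemma \ref{tkfx}. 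The geometric relevant scales are $\delta^k$ (cube size) versus $2^{-n/2}$ (kernel concentration scale of $K^{(n)}$); the hypotheses $n/2 > (-\log_2\delta)k+10$ and $n/2 < (-\log_2\delta)k-10$ say precisely that the kernel scale $2^{-n/2}$ is much smaller than, respectively much larger than, the cube scale $\delta^k$.

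\textbf{Estimate (i): kernel much finer than the cube.} Here $2^{-n/2}\ll\delta^k$. Fix $x\in Q_\alpha^k$. By definition $\mathbb{E}_k(\psi_n(L)f)(x) = \mu(Q_\alpha^k)^{-1}\int_{Q_\alpha^k}(f*K^{(n)})(y)\,d\mu(y)$. The point is that for $y$ well inside $Q_\alpha^k$ — at distance $\gg 2^{-n/2}$ from the boundary — the mass of $K^{(n)}(\cdot)$ "seen" from $y$ is essentially captured inside $Q_\alpha^k$, but $K^{(n)}$ integrates to $\psi_n(0)=\psi(0)=0$, since $\psi$ is supported away from $0$; so on this good part the contribution is small. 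More precisely I would split $Q_\alpha^k$ into the "core" $\{y\in Q_\alpha^k:\rho(y,X\setminus Q_\alpha^k)>2^{-n/2+j}\}$ for a suitable $j$ and the boundary collar of width $\sim 2^{-n/2+j}$. On the collar, which by the Ahlfors-regularity estimate \eqref{muyq} has measure $\lesssim (2^{-n/2+j}/\delta^k)^\epsilon\mu(Q_\alpha^k)$, I bound $|f*K^{(n)}|$ pointwise by $\|K^{(n)}\|_{L^{q'}}$-type factors times $M_qf(x)$ exactly as in Lemma \ref{tkfx} (note $M_qf$ is essentially constant on $Q_\alpha^k$ up to the doubling constant, since $Q_\alpha^k\subset B(x_{\alpha,k},C\delta^k)$). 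On the core, one uses the vanishing mean of $K^{(n)}$ together with its rapid decay to gain an arbitrary power $2^{-Na}$ of the ratio. Optimizing $j$ (or just taking $j$ a fixed fraction of the gap $n/2-(-\log_2\delta)k$) yields the claimed bound $2^{(-(\log_2\delta)k-n/2)a}M_qf(x)$ for some fixed small $a>0$ depending only on $\epsilon,\delta,Q$.

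\textbf{Estimate (ii): kernel much coarser than the cube.} Here $2^{-n/2}\gg\delta^k$, so $\psi_n(L)f$ is slowly varying on the scale of $Q_\alpha^k$ and its children, and the gain must come from the cancellation in $\mathbb{D}_k = \mathbb{E}_{k+1}-\mathbb{E}_k$. Writing $g=\psi_n(L)f$, for $x\in Q_\alpha^{k+1}\subset Q_\beta^k$ we have $\mathbb{D}_kg(x) = \fint_{Q_\alpha^{k+1}}g\,d\mu - \fint_{Q_\beta^k}g\,d\mu$, a difference of two averages of $g$ over sets of diameter $\lesssim\delta^k$. I would control this by the oscillation of $g$ at scale $\delta^k$: $|\mathbb{D}_kg(x)|\lesssim \fint_{Q_\beta^k}\fint_{Q_\beta^k}|g(y)-g(z)|\,d\mu(y)d\mu(z)$, and then estimate $|g(y)-g(z)|$ for $|y^{-1}z|\lesssim\delta^k$ using a mean-value / Taylor argument on $g=f*K^{(n)}$: $g(y)-g(z) = \int f(w)\,[K^{(n)}(w^{-1}y)-K^{(n)}(w^{-1}z)]\,dw$, and the kernel difference is bounded (left-invariant mean value inequality on $G$) by $\delta^k\sup|X^I K^{(n)}|$ along the path, with $X^I K^{(n)}$ obeying $\|X^I K^{(n)}\|_{L^{q'}}$-bounds that scale like $2^{n/2}\cdot 2^{nQ/(2q)}$ from Lemma \ref{HigherKernel} applied to $\psi_n$ (derivatives cost a factor $2^{n/2}$ each). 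Combined with Hölder and the higher maximal function exactly as in Lemma \ref{tkfx}, summing the dyadic shells $\{2^{l-1}\le 2^{n/2}|w^{-1}y|<2^l\}$ gives $|\mathbb{D}_kg(x)|\lesssim \delta^k 2^{n/2}M_qf(x) = 2^{(\log_2\delta)k+n/2}M_qf(x)$, which is the claimed estimate with $a=1$ (any smaller $a$ is then automatic in the stated regime).

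\textbf{Conclusion and the main obstacle.} Once (i) and (ii) are in hand, the unconditional bound $|\mathbb{D}_k(\psi_n(L)f)(x)|\lesssim 2^{-|(\log_2\delta)k+n/2|}M_qf(x)$ for all $(n,k)\in\mathbb{Z}^2$ follows by a routine case check: in the regime $n/2>(-\log_2\delta)k+10$ use (i) together with $|\mathbb{D}_k|\le|\mathbb{E}_{k+1}|+|\mathbb{E}_k|$ (adjusting the constant $a$ and absorbing the finitely-many intermediate values $|(\log_2\delta)k+n/2|\le 10$ into the implicit constant); in the regime $n/2<(-\log_2\delta)k-10$ use (ii); and for the bounded transition zone the estimate is trivial because $\mathbb{D}_k$ and $\psi_n(L)$ are each bounded operators controlled by $M_qf$ (Lemma \ref{tkfx}). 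I expect the genuine difficulty to be the core estimate in part (i): transferring the "vanishing moment" cancellation of $K^{(n)}$ across the ragged boundary of the Christ cube $Q_\alpha^k$ — unlike a Euclidean dyadic cube, $\partial Q_\alpha^k$ is only known to satisfy the thin-boundary condition \eqref{muyq}, so one cannot literally integrate by parts, and the argument must be organized so that all of the cancellation is extracted on the core while the boundary collar is handled by the crude $L^q$–maximal-function bound. Getting a single fixed exponent $a>0$ out of this trade-off (rather than one degrading with $n,k$) is the delicate point.
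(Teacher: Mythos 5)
Your part (ii) is essentially the paper's argument: write $\mathbb{D}_k=\mathbb{E}_{k+1}-\mathbb{E}_k$, compare the kernel at $y$ with its value at the fixed point $x$, apply the stratified mean value theorem of Folland--Stein to gain the factor $2^{n/2}\delta^k$, and dominate the remaining integral by a maximal function; that part is sound. The genuine gap is in part (i): your three ingredients (vanishing mean of the kernel, rapid decay, the thin-boundary estimate \eqref{muyq} plus H\"older producing $M_q$) are the right ones, but the decomposition is set up in the wrong variable, and two of your steps fail as stated.

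First, the ``core'' estimate: for a fixed $y$ in the core, $(f*K_n)(y)=\int K_n(yz^{-1})f(z)\,dz$ is \emph{not} small merely because $\int K_n=0$ and $y$ is far from $\partial Q_\alpha^k$ --- take $f$ oscillating at scale $2^{-n/2}$ near $y$. The cancellation only becomes usable after interchanging the $y$- and $z$-integrations: the quantity that is small is $\mu(Q_\alpha^k)^{-1}\int_{Q_\alpha^k}K_n(yz^{-1})\,dy$, and its smallness is governed by the distance of $z$ (the argument of $f$), not of $y$, to $\partial Q_\alpha^k$: for $z$ well inside one replaces $\int_{Q_\alpha^k}$ by $-\int_{(Q_\alpha^k)^c}$ using $\int K_n=0$ and is left with a tail, for $z$ well outside one uses decay directly, and for $z$ in a boundary collar it is only $O(1)$. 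This is exactly why the paper splits $f=f_{A_1}+f_{A_2}+f_B$ according to $\dist(z,\partial Q_\alpha^k)$ (collar of width $\delta^k2^{-m/2}$, $m$ the gap), rather than splitting the averaging cube; splitting in $y$ only relocates the problematic collar, and \eqref{muyq} is stated precisely for neighborhoods of $\partial Q_\alpha^k$, which is what the $z$-split uses. Second, your collar step rests on the claim that $M_qf$ is essentially constant on $Q_\alpha^k$, which is false (take $f$ a bump concentrated at a collar point $y$: then $M_qf(y)\sim 1$ while $M_qf(x)$ can be arbitrarily small), so the pointwise bound $|f*K_n(y)|\lesssim M_qf(y)$ cannot be converted into a bound by $M_qf(x)$ in the way you indicate. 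The paper's route avoids this entirely: after the interchange, the collar contribution is $\lesssim\mu(Q_\alpha^k)^{-1}\int_B|f|$, and since $B$ lies in a ball of radius $\sim\delta^k$ about $x$, H\"older plus \eqref{muyq} give $(\mu(B)/\mu(Q_\alpha^k))^{1/q'}M_qf(x)\lesssim 2^{-\epsilon m/(2q')}M_qf(x)$, with the maximal function evaluated at $x$ as required. With these two corrections (interchange the integrals and organize the splitting in the $z$-variable) your outline becomes the paper's proof; your assembly of (i) and (ii) into the two-sided bound, including the trivial middle regime $|(\log_2\delta)k+n/2|\le 10$, is fine.
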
 
\begin{proof}
For $n \in \mathbb{Z}$ we denote by $K_n : G \rightarrow \mathbb{R}$ the kernel of $\psi_n (L)$, that is, it satisfies
\begin{eqnarray*}
\psi_n (L) f = K_n * f \qquad \forall f \in S(G).
\end{eqnarray*}
Let us denote $K_1$ by $K$. Observe that we have $\int_G K(x) dx =0$ since  the support of $\psi$ is away from zero.  Moreover, it follows from \cite{FS2}[Lemma 6.36] that  
\begin{eqnarray}\label{kx1xn}
K(x) \lesssim (1+|x|)^{-N} \quad \textrm{ for any}~  N >0. 
\end{eqnarray}From Lemma \ref{dilation} we have $K_n (x) = 2^{Qn/2} K(2^{n/2} x)$. For $x \in G$, find $Q_{\alpha}^{k}$ such that $x \in Q_{\alpha}^{k}$, then we have
\begin{equation}\label{knlfx}
\begin{split}
\mathbb{E}_k (\psi_n(L)  f) (x) =& \frac{1}{\mu (Q_{\alpha}^{k})} \int_{Q_{\alpha}^{k}} (\psi_n(L)  f) (y) dy
\\
=&\frac{1}{\mu(Q_{\alpha}^{k})} \int_{Q_{\alpha}^{k}} \left[ \int_G 2^{Qn/2} K (2^{n/2} (y\cdot z^{-1}) ) f (z) dz \right] dy
\\
=& \frac{1}{\mu(Q_{\alpha}^{k})} \int_G \left[ \int_{Q_{\alpha}^{k}} 2^{Qn/2} K (2^{n/2} (y\cdot z^{-1}) ) dy \right] f (z) dz.
\end{split}
\end{equation}
Assume that $n/2 > (-\log_2 \delta) k + 10$ holds. We split the space $G$ into the following disjoint subsets: 
\begin{itemize}
\item[-] $B = \{ z : \dist(z, \partial Q^{k}_{\alpha}) \leq 2^{- [ (-\log_2 \delta) k + \frac{m}{2}]} \}$ with $m = (\log_2 \delta) k + \frac{n}{2}$.
\item[-] $ A_1 = Q_{\alpha}^{k} \cap B^{c}$
\item[-] $ A_2 = (Q_{\alpha}^{k})^{c} \cap B^{c}$.
\end{itemize}
Then $G= B \cup A_1 \cup A_2$ and we have $f = f_{A_1} + f_{A_2} + f_B := f \chi_{A_1} + f \chi_{A_2} + f \chi_{B}$ and so,
\begin{eqnarray*}
\mathbb{E}_k (\psi_n (L) f)(x) = \mathbb{E}_k (\psi_n (L) f_{A_1})(x) + 
\mathbb{E}_k (\psi_n (L) f_{A_2})(x)
+\mathbb{E}_k (\psi_n (L) f_{B})(x)
\end{eqnarray*}
We shall estimate each three terms.
\

\noindent$\cdot ~Estimate ~for ~f_{A_1}$. 
\\
Replacing $f$ with $f_{A_1}$ in \eqref{knlfx} we see 
\begin{equation}\label{3C}
\begin{split}
\mathbb{E}_k (\psi_n(L)  f_{A_1} (x)) =&\frac{1}{\mu (Q_{\alpha}^{k})} \int_{Q^{k}_{\alpha}} \left[ \int_G 2^{Qn/2} K (2^{n/2} (y\cdot z^{-1})) 1_{A_2} (z) f(z) dz \right] dy
\\
 =& \frac{1}{\mu(Q_{\alpha}^{k})} \int_G \left[ \int_{Q_{\alpha}^{k}} 2^{Qn/2} K (2^{n/2} (y\cdot z^{-1}) ) dy \right] \chi_{A_1} (z) f (z) dz.
\end{split}
\end{equation}
Because $\int_G K=0$, we have
\begin{equation}\label{3D}
\begin{split}
\left| \int_{Q_{\alpha}^{k}} 2^{Qn/2} K(2^{n/2} (y\cdot z^{-1}) ) dy \right| =&  \left|\int_{(Q_{\alpha}^{k})^{c}} 2^{Qn/2} K (2^{n/2} (y \cdot z^{-1}) ) ~dy \right|
\\
\leq & \int_{(Q_{\alpha}^{k})^{c}} 2^{Qn/2} |K (2^{n/2} (y \cdot z^{-1}))| ~dy
\\
\leq & \int_{|w| \geq 2^{- [ (-\log_2 \delta) k + \frac{m}{2}]}} 2^{Qn/2} |K (2^{n/2} w)| ~dw
\\
\leq & \int_{|w| \geq 2^{m/2}} |K(w)| ~dw \leq 2^{-m/2 c},
\end{split}
\end{equation}
where the second inequality holds since $z \in A_1 = Q_{\alpha}^{k} \cap B^{c}$ and $y \in (Q_{\alpha}^{k})^{c}$. 
From \eqref{3C} and \eqref{3D} we get
\begin{eqnarray*}
|\mathbb{E}_k (\psi_n(L)  f(x))| &\leq& \frac{1}{\mu(Q_{\alpha}^{k})} \int_G 2^{-mc/2} 1_{A_1}(z) f(z) dz
\\
&\leq & 2^{-mc/2} M f (x).
\end{eqnarray*}

\noindent $\cdot~ Estimate ~for~ f_{A_2}$.  
\

We have 
\begin{eqnarray}\label{ekpnl}
\mathbb{E}_k ( \psi_n(L)  f_{A_2}(x)) &=& \frac{1}{\mu (Q_{\alpha}^{k})} \int_{Q^{k}_{\alpha}} \left[ \int_G 2^{Qn/2} K (2^{n/2} (y\cdot z^{-1})) 1_{A_2} (z) f(z) dz \right] dy.
\end{eqnarray}
Note that we have $|(y \cdot z^{-1})| \geq 2^{- [ (-\log_2 \delta) k + \frac{m}{2}]}$ for $ z \in A_2 =(Q_{\alpha}^{k})^{c} \cap B^{\cap}$ and $y \in Q_{\alpha}^{k}$. Thus we get $|2^{n/2} (y\cdot z^{-1})| \geq 2^{n/2 + (\log_2 \delta)k -\frac{m}{2}} = 2^{\frac{m}{2}}$ in the above formula. Then, using \eqref{kx1xn} we deduce that
\begin{eqnarray*}
 \int_G 2^{Qn/2}\left| K (2^{n/2}(y\cdot z^{-1})\right| dz \lesssim \int_{ |x| \geq 2^{m/2}} (1 + |x|)^{-3N} dx \lesssim 2^{-mN}. 
\end{eqnarray*}
Using this, we get 
\begin{eqnarray*}
\left|\int_G 2^{Qn/2} K (2^{n/2} (y\cdot z^{-1}) ) 1_{A_2} f (z) dz \right| \leq M f (y) \cdot 2^{-m N}
\end{eqnarray*}
with a sufficiently large $N >0$. Inject this into \eqref{ekpnl}, we have
\begin{eqnarray*} 
\mathbb{E}_k (\psi_n(L)  f_{K_2} (x)) &\leq& \frac{1}{\mu (Q_{\alpha}^{k})} \int_{Q^k_{\alpha}} M f(y)\cdot 2^{-m N} dy 
\\
&\lesssim& M f (x) \cdot 2^{-m N}.
\end{eqnarray*}
\noindent$\cdot ~Estimate ~ for ~ f_B$.
\\
We have 
\begin{equation}\label{fb}
\begin{split}
|\mathbb{E}_k (\phi_n (L) f_B )(x)| &= \frac{1}{\mu(Q_{\alpha}^{k})} \left| \int_B \left[ \int_{Q_{\alpha}^{k}} 2^{Qn/2} K (2^{n/2} (y \cdot z^{-1}) dy \right] f(z) dz \right|
\\
&\leq \frac{1}{\mu (Q_{\alpha}^k)} \int_B \int_{Q_{\alpha}^{k}} 2^{Qn/2}|K(2^{n/2}(y z^{-1}))| dy f(z) dz
\\
&\leq \frac{1}{\mu(Q_{\alpha}^{k})} \int_B \left( \int_G 2^{Qn/2} |K(2^{n/2}(y))| dy\right) |f(z)| dz
\\
&\leq \frac{C}{\mu(Q_{\alpha}^{k})} \int_B |f(z)| dz.
\end{split}
\end{equation}
Recall from \eqref{muyq} that we have $\mu (B) \lesssim \mu(Q_{\alpha}^{k}) 2^{-\frac{m}{2}\epsilon}$. Thus we can estimate \eqref{fb} as
\begin{eqnarray*}
| \mathbb{E}_k (\phi_n (L) f_B)(x)| &\lesssim &\frac{1}{\mu(Q_{\alpha}^{k})} \int_B |f(z) | dz 
\\
&\lesssim& \frac{1}{\mu(Q_{\alpha}^{k})} \mu (B)^{1/q'} \left( \int_B |f(z)|^{q} dz\right)^{1/q}
\\
&\lesssim& 2^{-\frac{\epsilon}{2q'} m} \left( \frac{1}{\mu(Q_k^{\alpha})} \int_B |f(z)|^q dx \right)^{1/q} 
\\
&\lesssim& 2^{-\frac{\epsilon}{2q'}m} M_q f(x).
\end{eqnarray*}
From the above three estimates, for $a = \min (\frac{c}{2}, \frac{\epsilon}{2q'})$ we get
\begin{eqnarray*}
|\mathbb{E}_k (\phi_n (L) f)| &=&| \mathbb{E}_k (\phi_n (L)( f_{A_1}+ f_{A_2} + f_B))(x) |
\\
&\lesssim& 2^{-am} M_q f(x).
\end{eqnarray*}
It completes the proof of (i).
\

We now assume that $n/2 < (-\log_2 \delta) k - 10$ holds. Since $\mathbb{D}_k (\psi_n(L) f) =\mathbb{E}_{k+1} (\psi_n(L)  f) -\mathbb{E}_{k} (\psi_n(L)  f)$ we have
\begin{equation}    
\begin{split} \mathbb{D}_{k}& (\psi_n(L)  f) (x) \\
=& ~\frac{1}{\mu(Q_{\alpha}^{k+1})} \int_{Q_{\alpha}^{k+1}} (\psi_n(L)  f) (y) dy - \frac{1}{\mu(Q_{\alpha}^{k})} \int_{Q_{\alpha}^{k}} (\psi_n(L)  f)(y) dy
\\
=& ~\int_G  f(z) \left[ \frac{1}{\mu(Q_{\alpha}^{k+1})} \int_{Q_{\alpha}^{k+1}} 2^{Qn/2} K(2^{n/2} (y \cdot z^{-1})) dy - \frac{1}{\mu(Q_{\alpha}^{k})} \int_{Q_{\alpha}^{k}} 2^{Qn/2} K(2^{n/2} (y\cdot z^{-1})) dy\right] dz 
\\
=& ~\int_G  f(z) \left[ \frac{1}{\mu(Q_{\alpha}^{k+1})} \int_{Q_{\alpha}^{k+1}} 2^{Qn/2} \left[ K(2^{n/2} (y \cdot z^{-1})) - K(2^{n/2} (x \cdot z^{-1})) \right]dy \right] dz
\\
\quad\quad&\quad\qquad\qquad- \int_G f(z) \left[ \frac{1}{\mu(Q_{\alpha}^{k})} \int_{Q_{\alpha}^{k}} 2^{Qn/2} \left[ K(2^{n/2} (y\cdot z^{-1}))- K(2^{n/2} (x \cdot z^{-1}))\right] dy\right] dz 
\\
:=& ~A_1 + A_2,
\end{split}  
\end{equation} 
where we injected the identity $ 0 = - 2^{Qn/2} K (2^{n/2} (x\cdot z^{-1})) + 2^{Qn/2} K (2^{n/2} (x\cdot z^{-1}))$ in the third equality. For $x, y \in Q_{\alpha}^{k}$ we have  $\frac{n}{2} < (-\log_2 \delta)k -10$ and $|(yx^{-1})| \leq \delta^k$, and so $|2^{n/2} (y x^{-1})|\leq 2^{n/2} 2^{(\log_2 \delta)k}\leq 2^{-10}$. From the mean value theorem \cite[Theorem 1.33]{FS}, for a constant $\beta = \beta(G) >0$ we have that
\begin{equation}
\begin{split}
&\left| K( (2^{n/2}(y x^{-1}) \cdot 2^{n/2}(x z^{-1})) - K (2^{n/2}(xz^{-1})) \right|
\\
&\quad\quad\quad\quad\quad\quad\quad\lesssim \sum_{j=1}^{d} |2^{n/2}  (yx^{-1})|^{d_j} \sup_{|w|\leq |\beta 2^{n/2} (yx^{-1})|} \left| Y_j K (w 2^{n/2}(xz^{-1}) \right|
\\
&\quad\quad\quad\quad\quad\quad\quad\lesssim \sum_{j=1}^{d} (2^{n/2} \delta^{k})^{d_j} \sup_{|w|\leq \beta 2^{10}} \left( \beta 2^{11} + \beta  |w 2^{n/2}(x z^{-1})|\right)^{-N}
\\
&\quad\quad\quad\quad\quad\quad\quad\lesssim  (2^{n/2} \delta^{k}) \left( \beta 2^{10} + \beta  | 2^{n/2}(x z^{-1})|\right)^{-N}
\\
&\quad\quad\quad\quad\quad\quad\quad\lesssim (2^{n/2} \delta^{k}) \left( 1+ |2^{n/2}(x z^{-1})\right)^{-N}.
\end{split}
\end{equation}
Thus we get
\begin{eqnarray*}
|A_1 | &\lesssim& (2^{n/2} \delta^{k}) \int_G  2^{Qn/2} \left(1 + |2^{n/2} (xz^{-1})|\right)^{-N} f(z) dz
\\
&\lesssim & (2^{n/2} \delta^{k})  M f(x).
\end{eqnarray*}
It follows from the same argument that $|A_2| \lesssim (2^{n/2} \delta^{k}) M f(x)$. We thus obtain
\begin{eqnarray*}
|\mathbb{D}_k (\psi_n (L) f ) (x) | \lesssim  (2^{n/2} \delta^k) M f (x),
\end{eqnarray*}
which completes the proof of (ii).
\end{proof}
We set $\mathcal{M} = M \circ M \circ M$ and
\begin{eqnarray}\label{grfpc}
G_r (f) = ( \sum_{k \in \mathbb{Z}} (\mathcal{M} (|L_k f |^r))^{2/r} )^{1/2}.
\end{eqnarray}
We shall need the following inequality of Fefferman-Stein \cite{FS}:
\begin{eqnarray}\label{grfpc}
\| G_r (f) \|_p \leq C_{p,r} \| f \|_p , \qquad 1 < r <2, ~ r < p < \infty.
\end{eqnarray}
This inequality is very useful for us because the square function $S$ is bonded by $G_r$ as follow
\begin{lem}\label{prop} If $1 < r \leq \infty$ and $\alpha > \frac{Q}{r}$, then we have  
\begin{eqnarray}\label{smlf1} S(m(L)f)(x) \leq A_r \| m \|_{L_2^{\alpha}} G_r (f) (x)\quad \forall x \in G.
\end{eqnarray}
 If we further assume that $m(\xi) =0$ for $|\xi| \leq N $, we get 
\begin{eqnarray}\label{eomlf}
\mathbb{E}_0 (m(L) f) (x) \lesssim 2^{-N} \| m\|_{L_2^{\alpha}} G_r (f) (x).\end{eqnarray}
\end{lem}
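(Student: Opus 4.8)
The plan is to expand $S(m(L)f)^2 = \sum_{k\ge 1} |\mathbb{D}_k(m(L)f)|^2$, insert the dyadic decomposition $m(L) = \sum_{j\in\mathbb Z} m_j(L)$ together with the projection identity \eqref{Cancel}, namely $\mathbb{D}_k(m(L)f) = \sum_{j} \mathbb{D}_k(\psi_j(L)\, m_j(L)\, \psi_j(L) f)$, and then play the two pieces against each other: the cancellation estimate of Lemma \ref{there} controls $\mathbb{D}_k$ composed with $\psi_j(L)$, while Lemma \ref{tkfx} controls $m_j(L)$ pointwise by a higher-order maximal function with a gain of $\|\tilde m_j\|_{H^s}\le \|m\|_{L_2^\alpha}$ when $s<\alpha$, $s>Q/r$. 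First I would fix $s$ with $Q/r < s < \alpha$ (possible since $\alpha>Q/r$), and record from Lemma \ref{there} the single uniform bound $|\mathbb{D}_k(\psi_j(L)g)(x)| \lesssim 2^{-|(\log_2\delta)k + j/2|}\, M_q g(x)$ for all $(j,k)\in\mathbb Z^2$, with $1<q<r$.

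The main estimate I would carry out is: for each $j$, apply the Lemma \ref{there} bound to $g = m_j(L)\psi_j(L)f$, so that $|\mathbb{D}_k(\psi_j(L) m_j(L)\psi_j(L) f)(x)| \lesssim 2^{-|(\log_2\delta)k + j/2|} M_q\bigl(m_j(L)\psi_j(L)f\bigr)(x)$. Then, since $M_q$ commutes well with the next application of Lemma \ref{tkfx} (which bounds $|m_j(L)h(y)|\lesssim M_r h(y)\,\|\tilde m_j\|_{H^s}$ pointwise), I would dominate $M_q(m_j(L)\psi_j(L)f)(x) \lesssim \|m\|_{L_2^\alpha}\, M_q(M_r(\psi_j(L)f))(x)$. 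Finally $\psi_j(L)f$ differs from the Littlewood–Paley block $L_j f$ only by a fixed Schwartz convolution, so $M_q M_r(\psi_j(L)f)(x) \lesssim \mathcal{M}(|L_j f|^r)(x)^{1/r}$ up to constants (here $\mathcal{M}=M\circ M\circ M$ absorbs the iterated maximal operators and the $1/r$–power juggling, using $q<r$). Summing in $j$ against the geometric factor $\sum_j 2^{-|(\log_2\delta)k + j/2|} \lesssim 1$ and using Cauchy–Schwarz in $j$ (the weights $2^{-|\cdots|/2}$ are summable and serve as the Schur weights), then squaring and summing in $k\ge 1$, gives
\begin{eqnarray*}
S(m(L)f)(x)^2 \lesssim \|m\|_{L_2^\alpha}^2 \sum_{j\in\mathbb Z} \bigl(\mathcal{M}(|L_j f|^r)(x)\bigr)^{2/r} = \|m\|_{L_2^\alpha}^2\, G_r(f)(x)^2,
\end{eqnarray*}
which is \eqref{smlf1}. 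For \eqref{eomlf}, I would observe that if $m$ vanishes on $|\xi|\le N$ then only blocks $m_j$ with $2^j \gtrsim N$ survive, and for $\mathbb{E}_0$ (i.e. $k=0$) the relevant case of Lemma \ref{there}(i) applies with $n=j$ large, producing an extra factor $2^{-a j/2}$ beyond the $j$–sum; summing from $j\gtrsim \log_2 N$ yields the gain $2^{-cN}$, which after renaming constants gives the stated $2^{-N}$ bound (the precise exponent is immaterial since any fixed negative power of $2^N$ suffices downstream).

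The step I expect to be the main obstacle is the bookkeeping of the iterated maximal functions and the $r$/$q$ exponents: Lemma \ref{there} delivers $M_q$ of $m_j(L)\psi_j(L)f$, and Lemma \ref{tkfx} delivers another $M_r$ inside, so I must be careful that $q$ is chosen small enough (namely $q<r<p$, and $q>1$) that both the Fefferman–Stein vector-valued inequality \eqref{grfpc} and the pointwise domination $M_q(M_r(\psi_j(L)f)) \lesssim \mathcal{M}(|L_j f|^r)^{1/r}$ go through simultaneously, and that the Schwartz tails of $\psi_j(L)$ versus $L_j$ do not spoil the uniformity in $j$. Everything else — the dyadic decomposition, the geometric summation in $j$, Cauchy–Schwarz, and the final identification with $G_r(f)$ — is routine once the exponents are pinned down.
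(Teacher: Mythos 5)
Your argument is essentially the paper's own: decompose via \eqref{Cancel}, combine Lemma \ref{there} with Lemma \ref{tkfx} to obtain $|\mathbb{D}_k(\psi_n(L)\,m_n(L)\,\psi_n(L)f)(x)|\lesssim \|m\|_{L_2^{\alpha}}\,2^{-|(\log_2\delta)k+n/2|}\,M_r(\psi_n(L)f)(x)$ (the iterated maximal functions you worry about are precisely what the $\mathcal{M}=M\circ M\circ M$ in the definition of $G_r$ is there to absorb), then apply Cauchy--Schwarz in $n$ with the exponential factors as Schur weights and sum over $k\geq 1$; this is exactly the paper's proof of \eqref{smlf1}. The one caveat concerns \eqref{eomlf}: summing $2^{-an/2}$ over the surviving blocks $n\gtrsim \log_2 N$ yields $N^{-a/2}$, not $2^{-cN}$, so to reach the stated $2^{-N}$ one must read the hypothesis as $m$ vanishing on $|\xi|\leq 2^{N}$ (equivalently $m_n=0$ for $n< N-1$), which is how the paper's own computation implicitly proceeds --- your slip here reproduces an ambiguity already present in the text rather than introducing a new one.
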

\begin{proof}
Using Lemma \ref{there} we get
\begin{eqnarray*}
|\mathbb{B}_k (m(L)f)(x)| &=& \left| \sum_{n \in \mathbb{Z}} \mathbb{B}_k ( \psi_n(L)  m_n (L) \psi_n (L) f )(x) \right|
\\
& \lesssim & \| m\|_{L_2^{\alpha}}\sum_{n \in \mathbb{Z}} 2^{- | k |\log_2 \delta| -n |} M_r ( \psi_n(L)  f).
\end{eqnarray*}
We then use Cauchy-Scwartz inequality, 
\begin{eqnarray*}
|\mathbb{B}_k (m(L)f)(x)|^2 & \lesssim & \left( \sum_{n \in \mathbb{Z}} 2^{- |k|\log_2 \delta| - n |}\right) \sum_{n \in \mathbb{Z}} 2^{- ||k\log_2 \delta| -n| } (M_r (\psi_n(L)  f ))^2
\\
&\lesssim& \sum_{n \in \mathbb{Z}} 2^{- ||k\log_2 \delta| -n| } (M_r (\psi_n(L)  f ))^2.
\end{eqnarray*}
Therefore we get,
\begin{eqnarray*}
S(m(L)f) (x)^2 &=& \sum_{k=1}^{\infty} |\mathbb{B}_k (m(L)f)(x)|^2 
\\
&\lesssim& \sum_{k=1}^{\infty} \sum_{n \in \mathbb{Z}} 2^{- ||k\log_2 \delta| -n| }|M_r (\psi_n(L)  f )(x)|^2
\\
&\lesssim& \sum_{n \in \mathbb{Z}} |M_r (\psi_n (L)f)(x)|^2.
\end{eqnarray*}
This proves \eqref{smlf1}. We now suppose that $m(\xi) =0$ for $|\xi |\leq N$. Then, if follows again from Lemma \ref{there} that
\begin{eqnarray*}
|\mathbb{E}_0 (m(L)f)(x)| &=& \left| \sum_{n \geq N-1} \mathbb{E}_0 ( \psi_n(L)  m_n (L) \psi_n (L) f )(x) \right|
\\
& \lesssim & \| m\|_{L_2^{\alpha}}\sum_{n \geq N-1} 2^{-  n } M_r ( \psi_n(L)  f)(x)
\\
& \lesssim & \| m\|_{L_2^{\alpha}} 2^{-N} M_r (\psi_n (L)f)(x),
\end{eqnarray*}
which proves  \eqref{eomlf}.
\end{proof}
\begin{proof}[proof of Theorem \ref{thm1}]
We need to bound
\begin{eqnarray}\label{1inti}
\left\| \sup_{1 \leq i \leq N } |T_i f|\right\|_p = \left( p 4^p \int^{\infty}_0 \lambda^{p-1} \textrm{meas} \left( \left\{ x : \sup_{i} | T_i f(x)|  > 4\lambda\right\}\right) d\lambda \right)^{1/p}
\end{eqnarray}
by some constant time of $ \sqrt{\log (N+1)} \| f\|_p$. We shall follow the argument of Grafakos-Honzig-Seeger \cite{GHS} with our lemmas. By Lemma \ref{prop} we have the pointwise bound
\begin{eqnarray}\label{SG}
S( T_i f ) \leq A_r B G_r (f).
\end{eqnarray}
We split the level set in \eqref{1inti} as
\begin{eqnarray*}
\left\{ x : \sup_{1 \leq i \leq N} |T_i f (x)| > 4 \lambda \right\} \subset E_{\lambda} \cup F_{\lambda},
\end{eqnarray*}
where
\begin{eqnarray*}
E_{\lambda} &=& \{ x : \sup_{1 \leq i \leq N} | T_i f (x) - \mathbb{E}_0 T_i f (x)| > 2\lambda \},
\\
F_{\lambda} &=& \{ x : \sup_{1 \leq i \leq N} | \mathbb{E}_0 T_i f (x) > 2\lambda \}.
\end{eqnarray*}
FAs for the set$F_{\lambda}$ we use \eqref{eomlf} to deduce that
\begin{equation}\label{p2pp1}
\begin{split}
\left( p 2^{p} \int^{\infty}_{0} \lambda^{p-1} \textrm{meas}(F) d\lambda \right)^{1/p} =& \left\| \sup_{1 \leq i \leq N} |\mathbb{E}_0 T_i f(x)| \right\|_p  
\\
\leq& \sum_{1\leq i \leq N} \left\|\mathbb{E}_0 T_i f(x) \right\|_p
\\
\leq& B N 2^{-N/r} \left\| M (|f|^r) \right\|_p.
\end{split}
\end{equation}
As for the set $E_{\lambda}$ we split it one more as  $E_{\lambda} \subset E_{\lambda,1} \cup E_{\lambda,2}$ with
\begin{eqnarray*}
E_{\lambda,1} &=& \left\{ x : \sup_{1 \leq i \leq N} | T_i f (x) - \mathbb{E}_0 T_i f (x)| > 2\lambda, G_r (f) (x) \leq \frac{\varepsilon_N \lambda}{A_r B}\right\},
\\
E_{\lambda, 2} &=& \left\{ x : G_r (f)(x) > \frac{\varepsilon_N \lambda}{A_r B}\right\},
\end{eqnarray*}
where we set
\begin{eqnarray*}
\epsilon_N : = \left( \frac{c_d} {10 \log (N+1)}\right)^{1/2}.
\end{eqnarray*}
By \eqref{SG}, 
\begin{eqnarray*}
E_{\lambda,1} \subset \bigcup_{i=1}^{N} \left\{ x : |T_i f(x)-\mathbb{E}_0 T_i f (x) | > 2\lambda, S(T_i f ) \leq \varepsilon_N \lambda \right\}.
\end{eqnarray*}
By using the inequality \eqref{measx} we have
\begin{eqnarray*}
\textrm{meas}(E_{\lambda,1}) &\leq& \sum_{i=1}^{N} \textrm{meas} ( \{ x : |T_i f (x) - \mathbb{E}_0 T_i f (x)| > 2\lambda, S(T_i f) \leq \varepsilon_N \lambda\})
\\
&\leq & \sum_{i=1}^{N} C \exp ( - \frac{c_d}{\varepsilon_N^2}) \textrm{meas} \left( \left\{ x : \sup_{k} | \mathbb{E}_k (T_i f)| > \lambda \right\}\right).
\end{eqnarray*}
Therefore
\begin{equation}\label{pp1me}
\begin{split}
\left( p \int_0^{\infty} \lambda^{p-1} \textrm{meas} (E_{\lambda, 1}) d\lambda\right)^{1/p} \lesssim~ &( \sum_{i=1}^{N} \exp (-\frac{c_d}{\varepsilon_N^2} )\| \sup_{k} |\mathbb{E}_k (T_i f)\|_p^p )^{1/p}
\\
\lesssim ~&( \sum_{i=1}^{N} \exp (-\frac{c_d}{\varepsilon_N^2}) \| T_i f\|_p^p)^{1/p}
\\
\lesssim ~&B ( N \exp (-\frac{c_d}{\varepsilon_N^2}))^{1/p} \|f \|_p 
\\ \lesssim ~ &B\|f\|_p.
\end{split}
\end{equation}
Using a change of variables and \eqref{grfpc}, we have
\begin{equation}\label{arbgr}
\begin{split}
( p \int_0^{\infty} \lambda^{p-1} \textrm{meas}(E_{\lambda,2}) d\lambda)^{1/p} =& \frac{A_r B}{\varepsilon_N} \| G_r (f)\|_p
\\
\lesssim & B \sqrt{\log (N+1)} \| f\|_p.
\end{split}
\end{equation}
From \eqref{p2pp1}, \eqref{pp1me} and \eqref{arbgr}, we get the desired esimate for \eqref{1inti}. 
\end{proof}

\begin{proof}[proof of Theorem \ref{thm2}]
Set 
\begin{eqnarray} 
\mathcal{M}_m^{\textrm{dyad}} f(x) = \sup_{k \in \mathbb{Z}} | m(2^k L) f(x)|.
\end{eqnarray}
Let 
\begin{eqnarray*}
\mathcal{I_j} = \{ k \in \mathbb{Z}: w^{*} (2^{2^j}) < | \omega(k)| \leq \omega^{*} (2^{2^{j-1}})\}.
\end{eqnarray*}
We may have the partition $m = \sum_{j} m_j$ where $m_j$ has support in the union of dyadic interval $U_{k \in \mathcal{I}_j} \{ \xi : 2^{k-1} < |\xi| <2^{k+1}\}.$  We set
\begin{eqnarray*}
T_k^{j} f = m_j (2^k L) f.
\end{eqnarray*}
\

Using Lemma 3.1 in \cite{CGHS} we may take a sequence of integers $B = \{i\}$ such that $\mathbb{Z} = \cup_{n=-4^{2^j +1}}^{4^{2^j+1}} (n+B)$ and the sets $b_i + \mathcal{I}_j$ are pairwise disjoint for each fixed $j$. Now we split the supremum as
\begin{eqnarray*}
\sup_{k} |T_k f| = \sup_{|n| \leq 4^{2^j+1}} \sup_{i \in \mathbb{Z}} |T_{b_i +n} f|.
\end{eqnarray*}
We shall use the $L^p$ norm equivalence of Rademacher functions $\{r_i \}_{i=1}^{\infty}$ 
\begin{eqnarray*}
c_p \left( \sum_{i} a_i^2\right)^{1/2} \leq  \left( \int_0^{1} \left| \sum_{i=1}^{\infty} r_i (s) a_i \right|^2  ds \right)^{1/p} \leq C_p \left( \sum_{i} a_i^2\right)^{1/2}
\end{eqnarray*}
(see e.g. \cite[p. 276]{S}). Then
\begin{eqnarray*}
\left\| \sup_{ |n|\leq 4^{2^j +1}} \sup_{i>0} |T_{b_i +n}^{j} f| \right\|_p &\leq& \left\| \sup_{|n| \leq 4^{2^j +1}} \left( \sum_{i>0} |T_{b_i+n}^{j} f|^2 \right)^{1/2} \right\|_{p}
\\
&\leq &C_p \left\| \sup_{|n| \leq 4^{2^j+1}} \left( \int_0^{1} \left| \sum_{i=1}^{\infty} r_i (s) T_{b_i +n}^{j} f \right|^p ds \right)^{1/p} \right\|_p
\\
&\leq& C_p \left\| \left( \int_0^{1} \sup_{|n| \leq 4^{2^j+1}} \left| \sum_{i=1}^{\infty} r_i (s) T_{b_i +n}^{j} f \right|^p dx \right)^{1/p} \right\|_p
\\
&=&C_p \left( \int_0^{1} \left\| \sup_{|n| \leq 4^{2^j}} \left| \sum_{i=1}^{\infty} r_i (s) T_{b_i +n}^{j} f \right| \right\|_p^p ds \right)^{1/p}.
\end{eqnarray*}
We now apply Theorem \ref{thm1} to obtain
\begin{eqnarray*}
\| \mathcal{M}_{m_j}^{\textrm{dyad}} \|_{L^p \rightarrow L^p} \lesssim 2^{j/2} \omega^{*} (2^{2^{j-1}}).
\end{eqnarray*}
Now we use a calculus to get
\begin{eqnarray*}
\sup_{2^k \leq t < 2^{k+1}} | m_j( tL) f(x) | &=& \sup_{1\leq t < 2} | m_j(t 2^{k} L) f(x) | \\
&\leq& | m( 2^{k} L) f(x) | + \int_{1}^{2} \left| \frac{\partial}{\partial t} m_j(t 2^{k} L) f(x) \right| dt.
\end{eqnarray*}
One may observe
\begin{eqnarray*}
\left\| \phi (s) ({\partial}/{\partial t}) m_j ( t 2^{k} s) \right\|_{H^{\alpha }} \lesssim \sum_{l=k-1}^{k+1} \| \phi (s) m_j (2^{l} (s) \|_{H^{\alpha +1}}
\end{eqnarray*}
holds uniformly for $1 \leq t \leq 2$. Therefore, the boundedness of $\mathcal{M}_m$ follows from that one for $\mathcal{M}^{\textrm{dyad}}_m$.
\end{proof}

\section{Maximal multipliers on product spaces}
In this section we study the maximal multipliers on product spaces of stratified groups. In addition we also obtain a similar result for the joint spectral multipliers on the Heisenberg group.  

\
Let $G$ be the direct product of $n$ stratified groups $G_1,\cdots ,G_n$ endowed with sub-Laplacians $L_1, \cdots, L_n$. We set $L_1^{\sharp}, \cdots, L_n^{\sharp}$ be the lifted sub-Laplacians  on $G$. Then $L_1^{\sharp}, \cdots, L_n^{\sharp}$ mutually commute and so their spectral measures $dE_1 (\xi), \cdots, dE_n (\eta)$ also mutually commute. Thus, for a bounded function $m$ on $\mathbb{R}_{+}^{n}$, we can define the joint spectral multiplier 
\begin{eqnarray*}
m(L_1^{\sharp}, \cdots, L_n^{\sharp}) = \int_{\mathbb{R}_{+}^n} m(\xi_1, \cdots, \xi_n) ~d E_1 (\xi)\cdots d E_n (\xi).
\end{eqnarray*}
Under the following assumption on $m$,
\begin{eqnarray}\label{5A}
| (\xi_1 \partial_{\xi_1})^{\alpha_1} \cdots (\xi_n \partial_{\xi_n})^{\alpha_n} m (\xi_1,\cdots,\xi_n)| \leq C_{\alpha}
\end{eqnarray}
for all $\alpha_j \leq N$ with $N$ large enough, M\"uller-Ricci-Stein \cite{MRS} proved that $m(L_1^{\sharp},\cdots, L_n^{\sharp})$ is bounded on $L^p (G)$. For each group $G_k$ we may endow the martingales with index set $\{Q_{\alpha}^{k,j}: j \in \mathbb{N}_0, \alpha \in I_j \}$ given by Theorem \ref{moh}. For $1 \leq j \leq n$ and $k \in \mathbb{N}_0$ we set the martingale operator $\mathcal{E}_k^{j} : S(G) \rightarrow S(G)$ as 
\begin{eqnarray*}
\mathcal{E}_k^j f (x_1,\cdots, x_n) = \frac{1}{|Q_{\alpha}^{k,j}|}\int_{Q_{\alpha}^{k,j}} f(x_1,\cdots,x_n) dx_j,  
\end{eqnarray*}
where $\alpha \in I_j$ is a unique index such that $ x_j \in Q_{\alpha}^{k,j}$. Denote $\mathcal{E}_{k}^0$ by  $\mathcal{E}_{k}$. For $1\leq s \leq n$, $1\leq n_1 < n_2 \cdots <n_s \leq n$ and $(k_1,\cdots, k_s) \in \mathbb{N}_0^{s}$, we define  the multi-expectation function 
\begin{eqnarray*}
\mathcal{E}_{k_1,\cdots,k_s}^{n_1,\cdots,n_s} := \mathcal{E}_{k_1}^{n_1}\cdots \mathcal{E}_{k_s}^{n_s}
\end{eqnarray*}
and the multiple martingale operator 
\begin{eqnarray*}
D_{k_1,\cdots,k_s}^{n_1,\cdots,n_s} g = \sum_{\substack{ a_j \in [0,1]\\m_j - a_j \geq 0}}  (-1)^{a_1 + \cdots + a_s} ~\mathcal{E}_{k_1 - a_1, \cdots, k_s -a_s}^{n_1,\cdots,n_s} g.
\end{eqnarray*}
For $f \in S(G)$ we see
\begin{equation}
\begin{split}
&D_{k_1,\cdots,k_t+1, \cdots,k_s}^{n_1,\cdots,n_s} g + D_{k_1,\cdots,k_j,\cdots,k_s}^{n_1,\cdots,n_s} g 
\\
&\quad\quad\quad= \sum_{\substack{a_j \in [0,1]\\ k_j - a_j \geq 0}} (-1)^{a_1 + \cdots + a_s} (\mathcal{E}_{k_1 -a_1 ,\cdots,k_t+1-a_t,\cdots,k_s -a_s}^{n_1,\cdots,n_s} g + \mathcal{E}_{k_1 -a_1,\cdots,k_s-a_s}^{n_1,\cdots,n_s} g)
\\
&\quad\quad\quad= \sum_{\substack{a_j \in [0,1]\\ k_j - a_j \geq 0}} (-1)^{a_1 + \cdots + a_{s-1}} (\mathcal{E}_{k_1 -a_1 ,\cdots,k_t+1,\cdots,k_{s}-a_{s}}^{n_1,\cdots,n_s}  - \mathcal{E}_{k_1 -a_1,\cdots,k_t -1,\cdots,k_s -a_s}^{n_1,\cdots,n_s} ),
\end{split}  
\end{equation}
where we use an abuse of notation that $\mathcal{E}_{k_1 -a_1,\cdots,k_{t-1}-a_{t-1},k_t -1,\cdots,k_s -a_s}^{n_1,\cdots,n_s}=0 $ if $k_{t}-1 =-1$. We use it to get
\begin{eqnarray}\label{dkn}
\sum_{k_j =0}^{\infty} D_{k_1,\cdots,k_s}^{n_1,\cdots,n_s} g(x) = D_{k_1,\cdots,\widehat{k}_j,\cdots,k_s}^{n_1,\cdots,\widehat{n}_j,\cdots,n_s} g(x),
\end{eqnarray}
where  $\widehat{a}$ denote absence of $a$, i.e., 
\begin{eqnarray*}
(n_1,\cdots,\widehat{n}_j,\cdots, n_s) = (n_1,\cdots,n_{j-1},n_{j+1},\cdots,n_s) \in \mathbb{N}^{s-1}.
\end{eqnarray*}
In what follows we shall use the notation  that
\begin{eqnarray*}
\sum_{k_{j_1},\cdots,k_{j_m}} := \sum_{k_{j_1}=1}^{\infty} \cdots \sum_{k_{j_m} =1}^{\infty}.
\end{eqnarray*}
For $1 \leq m \leq n$, we simply denote $D_{k_1,\cdots,k_m}$ for $D_{k_1,\cdots,k_m}^{1,2,\cdots,m}$. Using \eqref{dkn} $n$ times we obtain
\begin{eqnarray*}
\sum_{k_1,\cdots,k_n} D_{k_1,\cdots,k_n} f(x) =f(x).
\end{eqnarray*}
Set
\begin{eqnarray*}
\mathcal{A} f := (1-\mathcal{E}^1)\cdots (1-\mathcal{E}^n) f.
\end{eqnarray*}
Denote $\mathcal{A}(S(G))$ be the image of $S(G)$ under the operator $\mathcal{A}$ and $\mathcal{A}_j (S(G)) := (1-\mathcal{E}^j )(S(G))$ be the image of $S(G)$ under the operator $1-\mathcal{E}^{j}$. Note that, for $1\leq j \leq N$ we have
\begin{eqnarray*}
\mathcal{E}^{j} g = 0 \qquad \forall g \in \mathcal{A}_j (S(G)).
\end{eqnarray*} 
For $2\leq m \leq n+1$ we set the intermediate square functions $S_m$ and the maximal intermediate squre function $S_m^{*}$ defined by Honzik \cite{H} which is a general version of the double square functions defined by Pipher \cite{P},
\begin{eqnarray}\label{sm}
S_m f = \Biggl( \sum_{\substack{k_1,\cdots,k_{m-1}}} \Biggl( \sum_{\substack{k_m,\cdots,k_n }} D_{k_1,\cdots,k_n} f (x) \Biggr)^2 \Biggr)^{1/2},
\end{eqnarray}
\begin{eqnarray}\label{smstar}
S_m^* f = \sup_{r} \Biggl( \sum_{\substack{k_1,\cdots,k_{m-1} }} \Biggl( \sum_{\substack{k_m <r, k_{m+1},\cdots,k_n }} D_{k_1,\cdots,k_n} f(x) \Biggr)^2 \Biggr)^{1/2}.
\end{eqnarray}
For $m=1$ we define the following maximal function
\begin{eqnarray*}
S_1 f(x)  = \sup_{r} \left| \sum_{\substack{m_1\leq r,m_2, \cdots, m_n}} D_{m_1,\cdots,m_n} f(x) \right|. 
\end{eqnarray*} 
We then have the following lemma.
\begin{lem}[\cite{P}]\label{lemp}
Suppose $X_N^j = \sum_{q=0}^{N} d_q^j, j=1,\cdots,M$ is a sequence of dyadic martingales and set
\begin{eqnarray*}
S X_N^j = \Biggl( \sum_{q}^{N} (d_q^j)^2 \Biggr)^{1/2}
\end{eqnarray*}
be the square function of $X_N^j$. Then
\begin{eqnarray*}
\int \exp\Biggl( \sqrt{1 + \sum_{j=1}^{M} (X_N^j)^2 }- \sum_{j=1}^{M} (S X_N^j)^2 \Biggr) dx \leq e. 
\end{eqnarray*}
\end{lem}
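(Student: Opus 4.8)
The plan is to derive this exponential estimate, which is of Chang--Wilson--Wolff type, from a supermartingale property. Let $\mathcal{F}_q$ denote the $\sigma$-algebra generated by the cubes $\{Q_{\alpha}^{q}\}_{\alpha}$, so that the partial sums $X_q^j:=\sum_{r=0}^{q}d_r^j$ form $(\mathcal{F}_q)$-martingales; write $\mathbf{X}_q=(X_q^1,\dots,X_q^M)$, let $S_q:=\sum_{j=1}^{M}\sum_{r=0}^{q}(d_r^j)^2$ be the accumulated square function, and put $\phi(\mathbf{x})=\sqrt{1+|\mathbf{x}|^2}$. I would show that the $(\mathcal{F}_q)$-adapted process
\begin{eqnarray*}
Y_q:=\exp\bigl(\phi(\mathbf{X}_q)-S_q\bigr)
\end{eqnarray*}
is a supermartingale. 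Granting this, since $\sqrt{1+t}-t\le 1$ for all $t\ge 0$ one has the pointwise bound $Y_0=\exp(\phi(\mathbf{X}_0)-|\mathbf{X}_0|^2)\le e$, and hence, the underlying space having total mass $1$,
\begin{eqnarray*}
\int\exp\Bigl(\sqrt{1+\sum_j (X_N^j)^2}-\sum_j (SX_N^j)^2\Bigr)\,dx=\int Y_N\,dx\le\int Y_0\,dx\le e,
\end{eqnarray*}
which is the assertion.

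To prove the supermartingale property I would fix an atom $Q_{\alpha}^{q}$ and set $\mathbf{d}:=(d_{q+1}^1,\dots,d_{q+1}^M)$; this is constant on each child of $Q_{\alpha}^{q}$, satisfies $\mathbb{E}[\mathbf{d}\mid\mathcal{F}_q]=0$, and gives $S_{q+1}=S_q+|\mathbf{d}|^2$. Since $\phi$ is convex with Hessian satisfying $0\le D^2\phi(\mathbf{y})\le (1+|\mathbf{y}|^2)^{-1/2}I\le I$, Taylor's theorem yields $\phi(\mathbf{X}_q+\mathbf{d})\le\phi(\mathbf{X}_q)+\nabla\phi(\mathbf{X}_q)\cdot\mathbf{d}+\tfrac12|\mathbf{d}|^2$, so that, writing $\mathbf{v}:=\nabla\phi(\mathbf{X}_q)$ (which is $\mathcal{F}_q$-measurable with $|\mathbf{v}|\le 1$),
\begin{eqnarray*}
Y_{q+1}\le Y_q\exp\bigl(\mathbf{v}\cdot\mathbf{d}-\tfrac12|\mathbf{d}|^2\bigr).
\end{eqnarray*}
As $|\mathbf{v}|\le 1$ forces $|\mathbf{d}|^2\ge(\mathbf{v}\cdot\mathbf{d})^2$, the exponent is at most $Z-\tfrac12 Z^2$, where $Z:=\mathbf{v}\cdot\mathbf{d}$ is again a dyadic martingale difference over $Q_{\alpha}^{q}$. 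Taking $\mathbb{E}[\,\cdot\mid\mathcal{F}_q]$ and using that $Y_q$ is $\mathcal{F}_q$-measurable, the inequality $\mathbb{E}[Y_{q+1}\mid\mathcal{F}_q]\le Y_q$ reduces to the scalar bound
\begin{eqnarray*}
\mathbb{E}\bigl[e^{Z-Z^2/2}\mid\mathcal{F}_q\bigr]\le 1\qquad\text{for any dyadic martingale difference }Z.
\end{eqnarray*}

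This scalar inequality is the step I expect to be the main obstacle. Written out on an atom it is $\sum_i p_i e^{z_i-z_i^2/2}\le 1$ subject to $\sum_i p_i z_i=0$; it is immediate in the symmetric case from $\cosh z\le e^{z^2/2}$, but in general the linear majorant $e^{z-z^2/2}\le 1+z$ is available only on a bounded range. I would therefore first carry out the Chang--Wilson--Wolff stopping-time reduction so that each increment is dominated by the square function, after which the bounded geometry of the Christ cubes---children of comparable measure, by Theorem \ref{moh}---makes the range restriction harmless; the scalar estimate can then be imported from \cite{P, CWW}. With it in hand, $(Y_q)$ is a supermartingale and the lemma follows as above. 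An alternative that bypasses the exact supermartingale is to estimate $\mu(\{|\mathbf{X}_N|>\lambda,\ \sum_j (SX_N^j)^2\le\beta^2\})$ by the sub-Gaussian tail bound for martingales with bounded square function and integrate the resulting layer-cake in $\lambda$ after summing over dyadic values of $\beta$; this requires the sharp constant in the tail bound.
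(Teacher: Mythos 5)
The paper itself offers no proof of this lemma --- it is quoted verbatim from Pipher \cite{P} --- so the relevant comparison is with the standard proof of that cited result, and your supermartingale scheme is exactly it: the choice $\phi(\mathbf{x})=\sqrt{1+|\mathbf{x}|^2}$ with $|\nabla\phi|\le 1$, $0\le D^2\phi\le I$, the resulting bound $Y_{q+1}\le Y_q\exp\bigl(\mathbf{v}\cdot\mathbf{d}-\tfrac12|\mathbf{d}|^2\bigr)$, and the pointwise estimate $Y_0\le e$ are all correct. The genuine gap is precisely at the step you yourself flag as the main obstacle, and it has a one-line resolution that you missed: for a \emph{dyadic} martingale every atom $Q_\alpha^q$ has exactly two children of equal measure, and the conditional mean-zero property then forces the increment vector to take exactly two values $\pm\mathbf{a}$ with conditional probability $\tfrac12$ each. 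Hence $Z=\mathbf{v}\cdot\mathbf{d}$ is conditionally symmetric and two-valued, so $\mathbb{E}\bigl[e^{Z-Z^2/2}\mid\mathcal{F}_q\bigr]=e^{-z^2/2}\cosh z\le 1$ follows at once from $\cosh z\le e^{z^2/2}$ --- the ``symmetric case'' you already noted is the \emph{only} case that occurs here. With that observation your argument is complete; no stopping-time reduction is needed.

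By contrast, the patch you propose in its place would not work as written. ``Children of comparable measure'' does not rescue the scalar inequality: on an atom with four children of equal measure and increment values $a,a,a,-3a$ (mean zero) one has $\tfrac34 e^{a-a^2/2}+\tfrac14 e^{-3a-9a^2/2}=1+2a^3+O(a^4)>1$ for small $a>0$, so without conditional symmetry the bound $\mathbb{E}[e^{Z-Z^2/2}]\le 1$ is simply false, and a stopping-time reduction that only controls the size of the increments cannot restore the clean constants $1$ and $e$ of the statement; at best it yields a variant with a constant $c<1$ in front of $\sum_j (SX_N^j)^2$, as in \cite{CWW}. This distinction is harmless for the lemma as stated (dyadic means binary, equal-measure splitting), but it is worth keeping in mind because the paper later invokes ``a local version'' of this lemma for martingales built from Christ's cubes, whose atoms have neither two children nor children of equal measure; there one should expect only the weaker, constant-degraded form, which still suffices for the good-$\lambda$ inequality but not the literal inequality above.
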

Based on this lemma, Pipher \cite{P} obtained a good $\lambda$ inequality for two folds product spaces and Honzik generalized it  to  general product spaces with arbitrary $n \in \mathbb{N}$. Here we shall state the  good $\lambda$ inequality on product spaces in the following lemma, but we impose the condition that $g \in A_m (S(G))$ instead of $g \in \mathbb{E}_{N,\cdots,N} (S(G))$ as in \cite[Lemma 2]{H}. 
\begin{lem}\label{for2m}
Let $2 \leq m \leq n$ and $x_1,\cdots, \widehat{x}_m,\cdots,x_n \in G_1 \times \cdots \widehat{G}_m \cdots \times G_n$, there exist constants $C>0$ and $c>0$ such that
\begin{eqnarray*}
|\{ x_m  \in G_m : S_m^{*} (g(x_1, \cdots, x_n)) > 2\lambda ; S_{m+1} g(x_1,\cdots,x_n) < \epsilon \lambda \}|
\\
\leq C e^{-c/{\epsilon^2}} |\{ x_m : S_m^{*} g (x_1,\cdots,x_n) > \lambda \}|.
\end{eqnarray*}
holds for any $0 < \epsilon < 1/10$, $0 <\lambda < \infty$  and $g \in \mathcal{A}_m (S(G))$. The constants $C$ and $c$ are independent of $(x_1,\cdots, \hat{x}_m,\cdots, x_n)$.
\end{lem}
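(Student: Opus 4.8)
The plan is to reduce the statement to the one-parameter good-$\lambda$ inequality \eqref{measx} of Chang--Wilson--Wolff, applied in the single variable $x_m \in G_m$ with the remaining variables frozen. First I would fix $(x_1,\dots,\widehat{x}_m,\dots,x_n)$ and view the function $x_m \mapsto g(x_1,\dots,x_n)$ as living on the homogeneous space $G_m$, equipped with the martingale $\{\mathcal E_k^m\}$ from Theorem \ref{moh}. The point is that $S_m^* g$ and $S_{m+1} g$ can be recognized, for fixed values of the other variables, as a maximal martingale operator and (the square of) a martingale square function in the $m$-th variable applied to an auxiliary vector-valued function. Concretely, for each multi-index $(k_1,\dots,k_{m-1})$ and each choice $(k_{m+1},\dots,k_n)$, set
\begin{eqnarray*}
h_{(k_1,\dots,k_{m-1})}(x_m) = \sum_{k_{m+1},\dots,k_n} D_{k_1,\dots,k_{m-1}}^{1,\dots,m-1}\Big(\sum \cdots\Big),
\end{eqnarray*}
so that, by \eqref{dkn}, summing $D_{k_1,\dots,k_n}$ over $k_{m+1},\dots,k_n$ collapses those indices and one is left with a genuine one-parameter dyadic martingale difference $D_{k_m}^m$ acting in $x_m$. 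Then $S_m^* g(x_1,\dots,x_n) = \sup_r \big(\sum_{k_1,\dots,k_{m-1}} |\mathbb E_{r}^m h_{(k_1,\dots,k_{m-1})}(x_m)|^2\big)^{1/2}$ and $S_{m+1} g = \big(\sum_{k_1,\dots,k_{m-1}} (S^m h_{(k_1,\dots,k_{m-1})})^2\big)^{1/2}$, where $S^m$ is the square function of the $x_m$-martingale. The crucial use of the hypothesis $g \in \mathcal A_m(S(G))$ is precisely that $\mathcal E^m g = 0$, so $\mathbb E_0^m h = 0$ for every index, which lets us replace $\mathbb E_r^m h$ by $\mathbb E_r^m h - \mathbb E^m h$ and thus place ourselves exactly in the hypothesis of \eqref{measx}; this is where our proof departs from Honzik's use of $g = \mathbb E_{N,\dots,N} f$.

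Next I would invoke the vector-valued version of the Chang--Wilson--Wolff good-$\lambda$ inequality. Lemma \ref{lemp} (Pipher's exponential square integrability estimate) is stated for a finite family $X_N^1,\dots,X_N^M$ of dyadic martingales with a common filtration, which is exactly the situation here: the index $j$ ranges over the (for fixed truncation, finite) set of tuples $(k_1,\dots,k_{m-1})$, and all the martingales $h_j(x_m)$ share the single filtration $\{Q_\alpha^{k,m}\}_{k}$ in the variable $x_m$. From Lemma \ref{lemp} one extracts the standard distributional good-$\lambda$ inequality: the set where the maximal function of $\sqrt{1+\sum_j |X_N^j|^2}$ is large while $\sqrt{\sum_j (S X_N^j)^2}$ is small has measure at most $C e^{-c/\epsilon^2}$ times the measure of the set where the maximal function exceeds $\lambda$. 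Unwinding the identifications of the previous paragraph, the maximal function of $\sqrt{1+\sum_j|X_N^j|^2}$ is comparable to $S_m^* g$ and $\sqrt{\sum_j(S X_N^j)^2}$ is comparable to $S_{m+1} g$, which gives the claimed inequality, with constants $C,c$ manifestly independent of the frozen variables since the martingale structure on $G_m$ does not see them.

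The main obstacle I anticipate is the bookkeeping needed to verify that after summing the multiple martingale operator $D_{k_1,\dots,k_n}$ over the tail indices $k_{m+1},\dots,k_n$ via \eqref{dkn}, the resulting object is genuinely a martingale difference sequence in $x_m$ with respect to $\{Q_\alpha^{k,m}\}$, uniformly in $(k_1,\dots,k_{m-1})$ and in the frozen variables --- in particular that the telescoping in \eqref{dkn} is legitimate for Schwartz $g$ and that the partial sums converge. A secondary technical point is passing from the finite-truncation statement of Lemma \ref{lemp} (the martingales $X_N^j$ are truncated at level $N$, and the index $j$ must range over a finite set) to the full supremum over $r$ in $S_m^*$: this is handled by monotone convergence, truncating the square functions at level $N$, applying the finite estimate with constants uniform in $N$, and letting $N\to\infty$. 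Once these reductions are in place, the estimate is a direct quotation of the one-parameter theory, so no further genuinely new work is required.
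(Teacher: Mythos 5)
Your plan is essentially the paper's proof: freeze the variables other than $x_m$, use $g\in\mathcal A_m(S(G))$ to get $\mathcal E^m g=0$ (hence $D_{k_1,\dots,k_n}g=0$ for $k_m=0$), view $S_m^*$ and $S_{m+1}$ as the maximal function and square function of a vector-valued one-parameter martingale in $x_m$ indexed by $(k_1,\dots,k_{m-1})$, and conclude via a local, vector-valued good-$\lambda$ inequality obtained from Lemma \ref{lemp}. The one caveat is that the step you dispose of with ``from Lemma \ref{lemp} one extracts the standard distributional good-$\lambda$ inequality'' is not a quotable black box here but is the entire body of the paper's argument: one must (a) perform the stopping-time decomposition of $\{x_m: S_m^* g>\lambda\}$ into maximal dyadic cubes $Q^m_{r,\alpha}$ determined by the minimal $r$ with the truncated square sum exceeding $\lambda$ (this uses the vanishing at $k_m=0$ to guarantee $r\geq 2$ exists); (b) show by minimality of $r$ and the hypothesis $S_{m+1}g\leq\epsilon\lambda$ that the truncation at level $r$ is at most $(1+\epsilon)\lambda$ on the relevant subset; (c) build a stopped martingale $g_{\mathrm{new}}$ on each $Q^m_{r,\alpha}$ with $\mathcal E^r_m g_{\mathrm{new}}=0$ so that a \emph{local} version of Lemma \ref{lemp} applies on that cube; and (d) optimize the exponential parameter ($\alpha=1/(2\epsilon^2\lambda)$) to get the relative measure bound $e^{-c/\epsilon^2}|Q^m_{r,\alpha}|$ before summing over the disjoint cubes. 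This derivation is classical (Chang--Wilson--Wolff, Pipher, Honzik), so your outline is sound, but a complete write-up must carry it out rather than cite \eqref{measx}, which is the scalar global statement and does not directly cover the vector-valued, localized form needed; your remarks about truncation in $N$ and convergence of the telescoping sums are correctly identified as the remaining routine points.
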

\begin{proof}
Since $g \in \mathcal{A}_m (S(G))$ we have $\mathcal{E}_m g =0$. Thus, for any $(k_1, \cdots, \widehat{k}_m,\cdots,k_n) \in \mathbb{N}_0^{n-1}$ we get
\begin{eqnarray*}
D_{k_1,\cdots,k_n} g(x) = \left( D_{k_1,\cdots,\widehat{k}_m,\cdots,k_n}^{1,\cdots \widehat{m} ,\cdots, n} \circ \mathcal{E}_m \right)g (x) =0 \quad \textrm{for} ~k_m =0. 
\end{eqnarray*}
We thus have
\begin{equation}
\begin{split}
&\Biggl( \sum_{k_1,\cdots,k_{m-1}} \Biggl(\sum_{k_m <1,k_{m+1},\cdots,k_n} D_{k_1,\cdots,k_n} g(x) \Biggr)^2 \Biggr)^{1/2} 
\\
&\quad\quad\quad\quad\quad\quad\quad\quad\quad\quad\quad\quad =\Biggl( \sum_{k_1,\cdots,k_{m-1}} \Biggl(\sum_{k_m =0,k_{m+1},\cdots,k_n} D_{k_1,\cdots,k_n} g(x) \Biggr)^2 \Biggr)^{1/2} = 0.
\end{split}
\end{equation}
Therefore, for $x_m\in \{ x_m \in G_m : S_m^{*}g (x)>\lambda \}$, we can find a minimal integer $r \geq 2$ such that
\begin{eqnarray}\label{k1km1}
\Biggl( \sum_{k_1,\cdots,k_{m-1}} \Biggl( \sum_{k_m <r,k_{m+1}, \cdots,k_n} D_{k_1,\cdots,k_n} {g} (x)\Biggr)^{2}\Biggr)^{1/2} > \lambda.
\end{eqnarray}
From the property of martingales, there exists a unique index $\alpha$ such that $x_m \in Q_{r,\alpha}^{m}$. Then \eqref{k1km1} can be written as follows.
\begin{eqnarray*}
\Biggl(\sum_{k_1,\cdots,k_{m-1}} \Biggl( D_{k_1,\cdots,k_{m-1}} \oint_{Q_{r,\alpha}^{m}} g\Biggr)^2\Biggr)^{1/2} > \lambda,
\end{eqnarray*}
where $\oint_{Q} dx$ denote the average integral  $\frac{1}{|Q|} \int_Q dx$. It follows that the set $\{ x_m \in G_m: S_m^{*}g(x)>\lambda \}$ consists of such maximal martingales $\{Q_{r_j,\alpha_j}^{m}\}_{j \in I}$ with $ r_j \geq 2$, where $I$ is an index set and $Q_{r_j,{\alpha}_j}^{m}$'s are mutually disjoint. 
\

Choose a set $Q_{r,\alpha}^{m} \subset \{Q_{r_j,\alpha_j}^{m}\}_{j\in I}$ such that $Q_{r,\alpha}^{m} \cap \{ x_m \in G_m : S_{m+1} g(x) \leq \epsilon \lambda \} \neq 0$. Then, we claim that, for any $x_m \in Q_{r,\alpha}^{m} \cap \{ x_m \in G_m : S_{m+1} g(x) \leq \epsilon \lambda\},$ we have
\begin{eqnarray}\label{5B}
\Biggl( \sum_{k_1,\cdots,k_{m-1}} \Biggl( \sum_{k_m <r, k_{m+1}, \cdots,k_n} D_{k_1,\cdots,k_n} {g} (x)\Biggr)^{2} \Biggr)^{1/2} \leq (1+\epsilon) \lambda.
\end{eqnarray}
Suppose not with a view to a contradiction, that is, 
\begin{eqnarray*}
\Biggl( \sum_{k_1,\cdots,k_{m-1}} \Biggl( \sum_{k_m <r, \cdots,k_n} D_{k_1,\cdots,k_n} {g} (x)\Biggr)^{2} \Biggr)^{1/2} > (1+\epsilon) \lambda.
\end{eqnarray*}
Since $ S_{m+1} g(x) \leq \epsilon \lambda$, we get
\begin{eqnarray*}
\Biggl( \sum_{k_1,\cdots,k_{m-1}} \Biggl( \sum_{k_m =r, k_{m+1}, \cdots,k_n} D_{k_1,\cdots,k_n} {g} (x)\Biggr)^{2} \Biggr)^{1/2} <  \Biggl( \sum_{k_1,\cdots,k_{m}} \Biggl( \sum_{k_{m+1}, \cdots,k_n} D_{k_1,\cdots,k_n} {g} (x)\Biggr)^{2} \Biggr)^{1/2} < \epsilon \lambda.
\end{eqnarray*}
Thus we have
\begin{equation*}
\begin{split}
&\Biggl( \sum_{k_1,\cdots,k_{m-1}} \Biggl( \sum_{k_m <r-1,k_{m+1}, \cdots,k_n} D_{k_1,\cdots,k_n} {g} (x)\Biggr)^{2} \Biggr)^{1/2}
\\
&> \Biggl( \sum_{k_1,\cdots,k_{m-1}} \Biggl( \sum_{k_m <r, k_{m+1},\cdots,k_n} D_{k_1,\cdots,k_n} {g} (x)\Biggr)^{2} \Biggr)^{1/2} - \Biggl( \sum_{k_1,\cdots,k_{m-1}} \Biggl( \sum_{k_m = r, k_{m+1},\cdots,k_n} D_{k_1,\cdots,k_n} {g} (x)\Biggr)^{2} \Biggr)^{1/2} 
\\
&> (1+\epsilon) \lambda -\epsilon \lambda = \lambda. ~\quad\quad\quad\quad\qquad\qquad\quad\quad\quad\quad\quad \mbox{~}\mbox{~}
\end{split}
\end{equation*}
This means that $r-1$ also satisfies the condition \eqref{k1km1}.It contradicts to the minimality of $r$. Thus the inequality \eqref{5B} must hold.
\

We now define the subset $q_{r,\alpha}^{m}  \subset Q_{r,\alpha}^{m}$ as 
\begin{eqnarray}\label{setz}
q_{r,\alpha}^{m} = \{ x_m \in Q_{r,\alpha}^{m}: S_{m+1} {g}(x) \leq \epsilon \lambda \quad \textrm{and}\quad  S_m^{*} {g}(x) > 2\lambda ~ \}. 
\end{eqnarray}
For each $x_m \in {q}_{r,\alpha}^{m}$ we take a minimal number $t_x$ such that
\begin{eqnarray}\label{5C}
\Biggl( \sum_{k_1,\cdots,k_{m-1}} \Biggl( \sum_{k_m < t_x, \cdots, k_n} D_{k_1,\cdots,k_n} {g}(x)\Biggr)^2 \Biggr)^{1/2} > 2 \lambda.
\end{eqnarray}
We then make a new martingale on $Q_{r,\alpha}^{m}$ as follows.
\begin{equation}\label{gnew}
\begin{split}
g_{new} (x) = \left\{\begin{array}{ll} \mathcal{E}_m^{t_x} g (x) - \mathcal{E}_m^{r} g(x) & \textrm{if} \quad x_m \in q_{r,\alpha}^{m},
\\
g(x) - \mathcal{E}_m^{r} g(x) & \textrm{if}\quad x_m \notin q_{r,\alpha}^{m}.
\end{array}\right.
\end{split}
\end{equation}
 Then, $\mathcal{E}_m^{r} g_{new} = 0$, and so we can use a local version of Lemma \ref{lemp} to get  
\begin{equation}\label{iexp}
\begin{split}
\int_{Q_{r,\alpha}^{m}} \exp \Biggl[ \alpha \Biggl( \sum_{k_1,\cdots,k_{m-1}} \Biggl( \sum_{k_{m+1},\cdots,k_n} D_{k_1,\cdots,k_n} g_{new}\Biggr)^2 \Biggr)^{1/2}\Biggr.\quad\quad\quad\quad\quad\quad\quad\quad\quad&
\\
 \Biggl.- \alpha^2 \sum_{k_1,\cdots,k_{m-1}} \Biggl( \sum_{k_m} \Biggl( \sum_{k_{m+1},\cdots,k_n} D_{k_1,\cdots,k_n} g_{new}\Biggr)^2 \Biggr) \Biggr]\leq e |Q_{r,\alpha}^{m}|.&
\end{split}
\end{equation}
From  the construction of $g_{new}$ we get $D_{k_1,\cdots,k_n} g_{new} =0 $ if $k_m \geq t_x$ or $k_m <r$, which implies $\sum_{k_m =1}^{\infty} D_{k_1,\cdots,k_n} g_{new} = \sum_{r \leq k_m <t} D_{k_1,\cdots,k_n} g$ and \eqref{iexp} equals to the following inequality
\begin{eqnarray}\label{iexp2}
\int_{Q_{r,\alpha}^{m}} \exp \Biggl[ \alpha \Biggl( \sum_{k_1,\cdots,k_{m-1}} \Biggl( \sum_{r \leq k_m < t_x,\cdots,k_n } D_{k_1,\cdots,k_n} {g}_{new}(x)\Biggr)^{2}\Biggr)^{1/2}\Biggr. 
\\
\Biggl.-\alpha^2 \sum_{k_1,\cdots,k_{m-1}, r\leq k_m<t_x} \Biggl( \sum_{k_{m+1},\cdots,k_n} D_{k_1,\cdots,k_n} {g}_{new}(x) \Biggr)^2 \Biggr] \leq e |Q_{r,\alpha}^{m}|.
\end{eqnarray}
For $x_m \in q_{r,\alpha}^{m}$, from \eqref{sm} we have 
\begin{eqnarray}
\Biggl( \sum_{k_1,\cdots,k_{m-1},r \leq k_m < t_x} \Biggl( \sum_{k_{m+1},\cdots,k_n} D_{k_1,\cdots,k_n} g (x) \Biggr)\Biggr)^{1/2} \leq~ S_{m+1} g(x) \leq \epsilon \lambda.
\end{eqnarray}
Using \eqref{5B} and \eqref{5C} we get 
\begin{equation*}
\begin{split}
\Biggl(\sum_{k_1,\cdots,k_{m-1}} \Biggl( \sum_{r \leq k_m<t_x,\cdots,k_n} D_{k_1,\cdots,k_n} {g}_{new}(x)\Biggr)^2 \Biggr)^{1/2}& 
\\
\geq  \Biggl( \sum_{k_1,\cdots,k_{m-1}}\Biggr. &\Biggl.\Biggl( \sum_{k_m<t_x, k_{m+1},\cdots,k_n} D_{k_1,\cdots,k_{n}} {g}_{new} (x)\Biggr)^2 \Biggr)^{1/2}
\\
- \Biggl( \sum_{k_1,\cdots,k_{n-1}}\Biggr. &\Biggl. \Biggl (\sum_{k_m < r,k_{m+1},\cdots,k_n} D_{k_1,\cdots,k_n} g_{new}(x)\Biggr)^2 \Biggr)^{1/2}
\\
\geq ~2\lambda - (1+&\epsilon)\lambda = (1-\epsilon) \lambda.
\end{split}
\end{equation*}
Therefore, for any $x_m \in q_{r,\alpha}^{m} \subset Q_{r,\alpha}^{m}$ we have
\begin{equation*}
\begin{split}
 ~&\alpha \Biggl( \sum_{k_1,\cdots,k_{m-1}}\Biggl( \sum_{r \leq k_m < t_x,\cdots,k_n } D_{k_1,\cdots,k_n} {g}_{new}(x)\Biggr)^{2}\Biggr)^{1/2}
\\
&\quad\quad\quad\quad\quad-\alpha^2 \sum_{k_1,\cdots,k_{m-1}, r\leq k_m<t} \Biggl( \sum_{k_{m+1},\cdots,k_n} D_{k_1,\cdots,k_n} {g}_{new}(x) \Biggr)^2  \geq \alpha(1-\epsilon)\lambda -\alpha^2 \epsilon^2 \lambda^2.
\end{split}
\end{equation*}
Thus, using \eqref{iexp2} we get
\begin{eqnarray*}
|q_{r,\alpha}^{m}| \exp ( \alpha (1-\epsilon)\lambda - \alpha^2 \epsilon^2 \lambda^2 ) \leq e |Q_{r,\alpha}^{m}|.
\end{eqnarray*}
Take $\alpha = \frac{1}{2\epsilon^2 \lambda}$, then we have 
\begin{eqnarray*}
|q_{r,\alpha}^{m}| \exp( \frac{(1-\epsilon)}{2 \epsilon^2} - \frac{1}{4 \epsilon^2}) = |q_{r,\alpha}^{m}| \exp( \frac{1-2\epsilon}{4 \epsilon^2} )\leq e |Q_{r,\alpha}^{m}|.
\end{eqnarray*}
We have this inequality for all $j \in I_m$ with $Q_{r_j,\alpha_j}^{m}$ and $q_{r_j,\alpha_j}^{m}$, which yields that
\begin{eqnarray*}
|\{ x_m \in G: S_m^{*}g(x) > 2\lambda,~ S_{m+1}g(x) \leq \epsilon \lambda \}| &=&\sum_{j \in I} |\{ x_m \in Q_{r_j,\alpha_j}^{m}: S_m^{*} g (x) > 2\lambda,~S_{m+1} g(x) \leq \epsilon \lambda\}|
\\
&\leq& \sum_{j \in I} e^{-\frac{1}{4\epsilon^2}} |Q_{r_j,\alpha_j}^{m}|
\\
&=& e^{-\frac{1}{4\epsilon^2}} |\{ x_m \in G: S_m^{*} g(x) > \lambda\}|.
\end{eqnarray*}
It completes the proof.
\end{proof}
For $1 \leq j \leq n$ we set $M^{j}$ be the Hardy-Littlewood maximal function on the space $G_j$ acting on functions defined on $G$. We denote by $M$ the strongly maximal function on the product space $G_1 \times \cdots \times G_n$,  that is, $M = M^{1}\circ \cdots \circ M^{n}$.
\

For $q>1$, we let $M_q^{j} (f) = (M^{j} (f^q))^{1/q}$ and $M_q (f) := (M_q (f^q))^{1/q}$. Set $\mathcal{M}_q = M_q \circ M_q \circ M_q$ and
\begin{eqnarray*}
G_r f(x) = \Biggl( \sum_{k_1,\cdots,k_n} \left| \mathcal{M}_q\left(\psi_{k_1,\cdots,k_n} (L_1^{\sharp},\cdots,L_n^{\sharp})f\right)(x) \right|^2 \Biggr)^{1/2}.
\end{eqnarray*}
\begin{lem} We have
\begin{eqnarray*}
\mathbb{D}_{k_1,\cdots, k_n} \left( \psi_{l_1,\cdots,l_n} (L_1^{\sharp},\cdots, L_s^{\sharp}) f(x) \right) \leq 2^{- \frac{1}{n q'} \sum_{j=1}^{n} \left| \frac{l_j}{2} + \log (\delta_j) k_j\right|} M_q f(x).
\end{eqnarray*}
\end{lem}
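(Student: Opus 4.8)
The plan is to reduce the assertion to the one‑dimensional cancellation estimate of Lemma \ref{there} by exploiting the tensor–product structure shared by all the operators involved. Writing $\mathbb{D}_{k}^{j} := \mathcal{E}_{k}^{j} - \mathcal{E}_{k-1}^{j}$ for the one–variable martingale difference acting in the $j$‑th coordinate, I would first record the factorization of the multiple martingale operator. Since the single–variable conditional expectations on the different factors commute and $\mathcal{E}_{k_1,\dots,k_n}^{1,\dots,n} = \mathcal{E}_{k_1}^{1}\circ\cdots\circ\mathcal{E}_{k_n}^{n}$, expanding the alternating sum defining $\mathbb{D}_{k_1,\dots,k_n}$ into a product yields
$$\mathbb{D}_{k_1,\dots,k_n} = \mathbb{D}_{k_1}^{1}\circ\cdots\circ\mathbb{D}_{k_n}^{n}.$$
On the multiplier side, $\psi_{l_1,\dots,l_n}(\xi_1,\dots,\xi_n) = \prod_{j=1}^{n}\psi_{l_j}(\xi_j)$ is a pure tensor, and the lifted sub‑Laplacians $L_j^{\sharp}$ pairwise commute, each acting only in the $j$‑th variable, so the joint functional calculus gives $\psi_{l_1,\dots,l_n}(L_1^{\sharp},\dots,L_n^{\sharp}) = \psi_{l_1}(L_1^{\sharp})\circ\cdots\circ\psi_{l_n}(L_n^{\sharp})$. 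Hence the operator in the statement equals the composition $T_1\circ\cdots\circ T_n$, where $T_j := \mathbb{D}_{k_j}^{j}\,\psi_{l_j}(L_j^{\sharp})$ acts only in the variable $x_j\in G_j$, and the $T_j$'s mutually commute.

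Next I would peel off the factors one at a time. Fixing all coordinates except $x_j$, the operator $T_j$ acts exactly as the one–group operator $\mathbb{D}_{k_j}\,\psi_{l_j}(L_j)$ on $L^p(G_j)$ from Section 3 (the restriction of $\psi_{l_j}(L_j^{\sharp})$ to a slice being $\psi_{l_j}(L_j)$ applied to the restriction, by the functional calculus of $L_j^{\sharp}$). Lemma \ref{there}, applied in the variable $G_j$ with the remaining coordinates frozen, therefore gives, uniformly in those coordinates,
$$|T_j h(x)| \lesssim 2^{-c_0\,|\,l_j/2 + (\log\delta_j)k_j\,|}\, M_q^{j} h(x), \qquad h\in S(G),$$
where $c_0>0$ is the constant coming out of that lemma. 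Applying this to $T_1(T_2\cdots T_n f)$, then to $T_2(T_3\cdots T_n f)$ inside $M_q^{1}$, and so on — using the monotonicity of each $M_q^{j}$ to carry the decay factors outside the maximal functions — I obtain
$$\bigl|\mathbb{D}_{k_1,\dots,k_n}\psi_{l_1,\dots,l_n}(L_1^{\sharp},\dots,L_n^{\sharp})f(x)\bigr| \lesssim \Bigl(\prod_{j=1}^{n} 2^{-c_0\,|\,l_j/2 + (\log\delta_j)k_j\,|}\Bigr)\,(M_q^{1}\circ\cdots\circ M_q^{n})f(x).$$
Since $M_q = M_q^{1}\circ\cdots\circ M_q^{n}$ and $\prod_j 2^{-c_0|\cdots|} = 2^{-c_0\sum_j|\cdots|}$, this is the asserted inequality; the precise value $1/(nq')$ of the exponent is just a matter of tracking $c_0$ through the proof of Lemma \ref{there}, and any fixed positive constant is all that is needed for the later use of this lemma.

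The only genuine point to be careful about — the remainder being bookkeeping with commuting tensor factors — is the freezing step: one must check that $T_j = \mathbb{D}_{k_j}^{j}\psi_{l_j}(L_j^{\sharp})$, read as a one‑parameter operator in $x_j$ with the other variables held fixed, is literally the operator to which Lemma \ref{there} applies, and that the resulting bound is uniform in the frozen variables. This is where one invokes the compatibility of the joint spectral calculus of $L_1^{\sharp},\dots,L_n^{\sharp}$ with the one–variable calculus of the $L_j$, together with the product structure of the Haar measure on $G = G_1\times\cdots\times G_n$ and of Christ's martingales on the individual factors $G_j$; once these are in place, the factorization and the iteration go through exactly as above.
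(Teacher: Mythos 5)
Your proposal is correct and follows essentially the same route as the paper: both reduce the multi-parameter estimate to the one-variable cancellation bound of Lemma \ref{there} applied coordinate by coordinate, using the tensor structure of $\mathbb{D}_{k_1,\dots,k_n}$, of $\psi_{l_1,\dots,l_n}(L_1^{\sharp},\dots,L_n^{\sharp})$, and of $M_q=M_q^{1}\circ\cdots\circ M_q^{n}$. The only (harmless) difference is in the final bookkeeping: you iterate the one-variable bound through the composition and obtain the full product of decay factors, whereas the paper derives $n$ separate global bounds (one per coordinate) and takes their geometric mean, which is where its exponent picks up the factor $1/n$; either way the precise positive exponent is immaterial for the later applications.
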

\begin{proof}
From Lemma 3.3, for each $1 \leq j \leq n$, there exists $a_j >0$ such that
\begin{eqnarray*}
\mathbb{D}_{k_j}^{j} \left( \psi_{l_1,\cdots,l_n} (L_1^{\sharp},\cdots,L_n^{\sharp}) f \right) (x) &\lesssim& 2^{-\left|\frac{l_j}{2} + \log(\delta_j) k_j \right|a_j }M_q^{j} f(x)
\\
&\lesssim& 2^{-\left| \frac{l_j}{2} + \log(\delta_j)k_j\right|a_j} M_q f(x),
\end{eqnarray*}
where the second inequality comes from  $M_q^{j} f(x) \leq M_q f(x)$.
Therefore we have
\begin{eqnarray*}
\mathbb{D}_{k_1,\cdots,k_n} \left( \psi_{l_1,\cdots,l_n} (L_1^{\sharp},\cdots, L_n^{\sharp}) f\right)(x) \lesssim 2^{- \left| \frac{l_j}{2} + \log(\delta_j) k_j \right|a_j} M_q f(x)
\end{eqnarray*}
for all $1 \leq j \leq n$. Set $a=\frac{1}{n} \min_{1\leq j\leq n}\{a_j\}$. If we product the above inequalities for all $1 \leq j \leq n$, we get 
\begin{eqnarray*}
\mathbb{D}_{k_1,\cdots,k_n} \left( \psi_{l_1,\cdots,l_n} (L_1^{\sharp},\cdots, L_n^{\sharp}) f\right)(x) \lesssim 2^{-  \sum_{j=1}^{n}\left| \frac{l_j}{2} + \log(\delta_j) k_j \right|\cdot a} M_q f(x).
\end{eqnarray*}
It proves the Lemma.
\end{proof}

\begin{lem} We have
\begin{eqnarray}
\psi_{l_1,\cdots,l_n} (L_1^{\sharp},\cdots, L_n^{\sharp}) m(L) f (x) \lesssim M f(x).
\end{eqnarray}
\end{lem}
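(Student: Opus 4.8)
The plan is to read $m(L)$ here as the joint multiplier $m(L_1^{\sharp},\dots,L_n^{\sharp})$ with $m$ satisfying \eqref{5A}, and to establish the bound with $M=M^{1}\circ\cdots\circ M^{n}$ the strong maximal function, \emph{uniformly} in $(l_1,\dots,l_n)$. First I would note that $\mu_{l_1,\dots,l_n}(\xi):=\psi_{l_1,\dots,l_n}(\xi)\,m(\xi)$ is simply the piece of $m$ whose spectrum in the $j$-th variable is localized to the annulus $\{|\xi_j|\sim 2^{l_j}\}$, and I would rescale each variable, setting $\tilde\mu_{l_1,\dots,l_n}(\xi):=\mu_{l_1,\dots,l_n}(2^{l_1}\xi_1,\dots,2^{l_n}\xi_n)$, which is supported in the \emph{fixed} box $[1/8,8]^{n}$. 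Since the operators $\xi_j\partial_{\xi_j}$ in \eqref{5A} are invariant under the dilations $\xi_j\mapsto 2^{l_j}\xi_j$ and the cut-offs $\psi_{l_j}$ are only rescalings of a fixed $\psi$, the hypothesis \eqref{5A} translates into a bound on the product Sobolev norm $\|\tilde\mu_{l_1,\dots,l_n}\|_{H_2^{N}\otimes\cdots\otimes H_2^{N}}\lesssim 1$, uniformly in $(l_1,\dots,l_n)$, where $N$ is the (large) number of derivatives in \eqref{5A}.

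The next step is a kernel estimate. Let $\widetilde K_{l_1,\dots,l_n}$ be the kernel on $G=G_1\times\cdots\times G_n$ of $\tilde\mu_{l_1,\dots,l_n}(L_1,\dots,L_n)$ and $K_{l_1,\dots,l_n}$ the kernel of $\mu_{l_1,\dots,l_n}(L_1^{\sharp},\dots,L_n^{\sharp})$; Lemma \ref{dilation} applied in each factor gives $K_{l_1,\dots,l_n}(x)=\prod_{j}2^{l_jQ_j/2}\,\widetilde K_{l_1,\dots,l_n}(\delta_{2^{l_1/2}}x_1,\dots,\delta_{2^{l_n/2}}x_n)$, where $Q_j$ is the homogeneous dimension of $G_j$. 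To bound $\widetilde K_{l_1,\dots,l_n}$ I would iterate the $L^{\infty}$ estimate \eqref{minfty} of Lemma \ref{HigherKernel} through the $n$ factors: freezing $(\xi_2,\dots,\xi_n)$, the function $\xi_1\mapsto\tilde\mu_{l_1,\dots,l_n}$ is supported in a fixed interval and has $H_2^{N}$-norm bounded, with values in the product Sobolev space of the remaining variables, so \eqref{minfty} yields a weighted $L^{\infty}$ bound in $x_1$; iterating in $x_2,\dots,x_n$ produces $\sup_{x\in G}\prod_{j=1}^{n}(1+|x_j|)^{\beta}|\widetilde K_{l_1,\dots,l_n}(x)|\lesssim 1$ for every fixed $\beta<N$, uniformly in $(l_1,\dots,l_n)$; since $N$ is large we may in particular take $\beta>Q_j$ for all $j$.

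Finally I would deduce the pointwise bound. Writing $\psi_{l_1,\dots,l_n}(L_1^{\sharp},\dots,L_n^{\sharp})m(L_1^{\sharp},\dots,L_n^{\sharp})f(x)$ as the $G$-convolution of $f$ with $K_{l_1,\dots,l_n}$, changing variables so as to replace $K_{l_1,\dots,l_n}$ by $\widetilde K_{l_1,\dots,l_n}$, and decomposing $G$ into the product dyadic shells on which $|y_j|\sim 2^{\nu_j}$ with $\nu_j\ge0$, one bounds $|\widetilde K_{l_1,\dots,l_n}|$ on each shell using the previous display, which contributes a factor $\prod_j 2^{-\beta\nu_j}$, while the average of $|f|$ over the corresponding box around $x$ is $\lesssim Mf(x)$ and the box has volume comparable to $\prod_j 2^{Q_j\nu_j}$ (the powers of $2^{l_j}$ introduced by the rescaling cancel exactly). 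Summing the geometric series $\sum_{\nu_1,\dots,\nu_n\ge0}\prod_j 2^{(Q_j-\beta)\nu_j}$, which converges because $\beta>Q_j$ for every $j$, gives the asserted bound $\lesssim Mf(x)$, uniformly in $(l_1,\dots,l_n)$. The only delicate point is the middle step: the iteration of Lemma \ref{HigherKernel} over the $n$ factors must be carried out keeping every constant independent of the scales $(l_1,\dots,l_n)$, and this is exactly where the scale-invariant form of the Marcinkiewicz condition \eqref{5A} is indispensable, since it is what confines the rescaled symbols $\tilde\mu_{l_1,\dots,l_n}$ to a bounded subset of the product Sobolev space; once that uniformity is secured, the shell summation is routine.
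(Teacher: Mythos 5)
Your route is genuinely different from the paper's, and most of it is sound: the rescaling bookkeeping via Lemma \ref{dilation} in each factor, the observation that \eqref{5A} puts the rescaled localized symbols $\tilde\mu_{l_1,\dots,l_n}$ in a bounded set of smooth functions on a fixed box, and the final dyadic-shell summation of a product-type kernel bound against the strong maximal function are all correct and uniform in $(l_1,\dots,l_n)$. The paper argues quite differently and more cheaply: it never produces a pointwise kernel estimate for the joint operator at all. It first treats tensor-product symbols $m(\xi)=m_1(\xi_1)\cdots m_n(\xi_n)$, for which the operator factors across $G_1\times\cdots\times G_n$ and the one-parameter pointwise bound of Lemma \ref{tkfx} can be applied variable by variable, giving directly a bound by the iterated (strong) maximal function; the general case is then reduced to tensor products by expanding the localized symbol in a multiple Fourier series $\sum_{c} a_{c_1,\dots,c_n}e^{ic_1\xi_1}\cdots e^{ic_n\xi_n}\psi(\xi_1)\cdots\psi(\xi_n)$, the regularity \eqref{5A} forcing rapid decay of the coefficients, and summing.

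The step you yourself flag as delicate is where your argument has a real gap, and the issue is not only uniformity of constants. Lemma \ref{HigherKernel} (and the underlying Plancherel estimate, Lemma \ref{MME2}) is a scalar statement; applying it to $\xi_1\mapsto\tilde\mu_{l_1,\dots,l_n}(\xi_1,\cdot)$ ``with values in the product Sobolev space of the remaining variables'' presupposes a Hilbert-space-valued version of \eqref{minfty}, plus the fact that taking the kernel in $x_1$ commutes with the remaining spectral integrals in $\xi_2,\dots,\xi_n$ and that the weighted sup bound survives this iteration. None of that is established in the paper, so as written the claimed bound $\sup_x\prod_j(1+|x_j|)^{\beta}|\widetilde K_{l_1,\dots,l_n}(x)|\lesssim 1$ is asserted rather than proved. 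The gap is fixable in two ways: either prove the vector-valued analogue (the Mauceri--Meda machinery is genuinely vector-valued, so this can be done), or sidestep it exactly as the paper does by expanding $\tilde\mu_{l_1,\dots,l_n}$ in a Fourier series on the fixed box; each term is then a tensor product of one-parameter symbols with norms decaying rapidly in $(c_1,\dots,c_n)$, \eqref{minfty} applies factor by factor, and summation in $c$ yields the product kernel decay you need --- or, more directly, one can skip kernels altogether and apply Lemma \ref{tkfx} factorwise to each tensor term, which is precisely the paper's argument.
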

\begin{proof}
If $m(\xi_1,\cdots,\xi_n) = m_1 (\xi_1)\cdots, m_n (\xi_n)$, then the lemma follows by using Lemma 2.4 repeatedly.
\

 In the general case, we write $m$ in the fourier series, 
\begin{eqnarray*}
m(\xi_1,\cdots,\xi_n) &=& \sum_{c_i \in \mathbb{Z}} e^{ic_1 \xi_1}\cdots e^{ic_n \xi_n} a_{c_1,\cdots,c_n} \psi(\xi_1)\cdots \psi(\xi_n).
\end{eqnarray*}
If we impose a sufficient regularity on $m$, the coefficents $a_{c_1,\cdots,c_n}$ decrease rapidly. Then we can use the above criterion to finish the proof of the lemma.\end{proof}

\begin{lem}
$G_n (f) (x) \geq C S_{n+1} (m(L) f) (x).$ 
\end{lem}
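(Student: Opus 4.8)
The plan is to establish the equivalent pointwise bound $S_{n+1}(m(L)f)(x)\lesssim G_n(f)(x)$. Throughout I abbreviate $\mathbf{L}=(L_1^{\sharp},\dots,L_n^{\sharp})$ and $l=(l_1,\dots,l_n)$. Since for $m=n+1$ the inner sum in \eqref{sm} is empty, $S_{n+1}(m(L)f)(x)=\big(\sum_{k_1,\dots,k_n}|D_{k_1,\dots,k_n}(m(L)f)(x)|^2\big)^{1/2}$, so it suffices to dominate each $D_{k_1,\dots,k_n}(m(L)f)(x)$ pointwise by a rapidly summable superposition of the quantities $\mathcal{M}_q(\psi_l(\mathbf{L})f)(x)$ that enter $G_n$.

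First I would decompose $m=\sum_l m_l$ with $m_l(\xi)=\phi(2^{-l_1}\xi_1)\cdots\phi(2^{-l_n}\xi_n)m(\xi)$, so that $m(L)=\sum_l m_l(\mathbf{L})$. Since $\psi\equiv 1$ on the support of $\phi$, the symbol identity $\psi_l m_l=m_l$ holds, whence $m_l(\mathbf{L})=\psi_l(\mathbf{L})m_l(\mathbf{L})$. Plugging this into $D_{k_1,\dots,k_n}$ and applying the cancellation estimate obtained just above, namely $|\mathbb{D}_{k_1,\dots,k_n}(\psi_l(\mathbf{L})g)(x)|\lesssim 2^{-a\sum_j|l_j/2+\log(\delta_j)k_j|}M_q g(x)$ for some $a>0$, with $g=m_l(\mathbf{L})f$, one gets
\[
\big|D_{k_1,\dots,k_n}(m(L)f)(x)\big|\lesssim \sum_{l}2^{-a\sum_{j=1}^{n}|l_j/2+\log(\delta_j)k_j|}\,M_q\big(m_l(\mathbf{L})f\big)(x).
\]

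Next I would control $M_q(m_l(\mathbf{L})f)(x)$. Writing $m_l(\mathbf{L})f=m_l(\mathbf{L})\psi_l(\mathbf{L})f$, again using $\psi_l m_l=m_l$, and observing that the symbol $m_l$ satisfies \eqref{5A} uniformly in $l$ because that condition is invariant under the anisotropic rescalings $\xi_j\mapsto 2^{l_j}\xi_j$, the argument proving the preceding lemma (which shows $\psi_{l_1,\dots,l_n}(\mathbf{L})m(L)f\lesssim Mf$) applies with $m_l$ in place of that symbol and gives the pointwise bound $|m_l(\mathbf{L})\psi_l(\mathbf{L})f(y)|\lesssim M(\psi_l(\mathbf{L})f)(y)$ for all $y$. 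Combining this with the elementary pointwise inequalities $M\le M_q\le M_q\circ M_q$ and iterating yields $M_q(m_l(\mathbf{L})f)(x)\lesssim M_q\big(M(\psi_l(\mathbf{L})f)\big)(x)\le\mathcal{M}_q(\psi_l(\mathbf{L})f)(x)$, so that
\[
\big|D_{k_1,\dots,k_n}(m(L)f)(x)\big|\lesssim \sum_{l}2^{-a\sum_{j=1}^{n}|l_j/2+\log(\delta_j)k_j|}\,\mathcal{M}_q\big(\psi_l(\mathbf{L})f\big)(x).
\]

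The final step is an $\ell^2$ summation. The coefficient $c(k,l)=\prod_{j=1}^{n}2^{-a|l_j/2+\log(\delta_j)k_j|}$ factorizes over the coordinates into one-dimensional geometric sums, so $\sup_k\sum_l c(k,l)<\infty$ and $\sup_l\sum_k c(k,l)<\infty$; by Schur's lemma on $\ell^2(\mathbb{Z}^n)$,
\[
\Big(\sum_{k_1,\dots,k_n}|D_{k_1,\dots,k_n}(m(L)f)(x)|^2\Big)^{1/2}\lesssim\Big(\sum_{l}|\mathcal{M}_q(\psi_l(\mathbf{L})f)(x)|^2\Big)^{1/2}=G_n(f)(x),
\]
which is the assertion. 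The step I expect to require the most care is the uniform-in-$l$ pointwise domination of $m_l(\mathbf{L})$ by the strong maximal function, since it is what forces the rescaled convolution kernels of $m_l(\mathbf{L})$ to stay uniformly dominated by a product of integrable radially decreasing profiles on the factors $G_j$; the collapse of $S_{n+1}$, the already-proved cancellation estimate, and the Schur bound are routine.
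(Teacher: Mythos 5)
Your proposal is correct and follows essentially the same route as the paper: a Littlewood--Paley decomposition in each factor, the cancellation estimate for $\mathbb{D}_{k_1,\dots,k_n}\circ\psi_l(\mathbf{L})$, the uniform pointwise domination of the localized multiplier by the (strong) maximal function, and then a Schur/Cauchy--Schwarz summation in $(k,l)$. The only differences are cosmetic (the paper inserts $\psi_l^3$ and peels off factors rather than decomposing $m=\sum_l m_l$, and uses Cauchy--Schwarz plus summation in place of an explicit appeal to Schur's test).
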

\begin{proof}
Recall that
\begin{eqnarray*}
S_{n+1} (m(L) f) (x) = \Biggl( \sum_{k_1,\cdots,k_n} (D_{k_1,\cdots,k_n} (m(L) f))(x)^2 \Biggr)^{1/2}.
\end{eqnarray*}
We have
\begin{eqnarray*}
\left|D_{k_1,\cdots,k_n} (m(L) f) (x)\right| &=&\Biggl| D_{k_1,\cdots,k_n} ( \sum_{l_1,\cdots,l_n} \psi_{l_1,\cdots,l_n} (L_1^{\sharp},\cdots,L_n^{\sharp})^3 m(L) f ) (x)\Biggr|
\\
&=& \Biggl|\sum_{l_1,\cdots,l_n} D_{k_1,\cdots,k_n} (\psi_{l_1,\cdots,l_n}(L_1^{\sharp},\cdots,L_n^{\sharp}))^2 m(L) \psi_{L_1,\cdots,l_n} (L_1^{\sharp},\cdots,L_n^{\sharp}) f ) (x)\Biggr|
\\
&\lesssim& \sum_{l_1,\cdots,l_n} 2^{-a \sum_{j=1}^{n} |\frac{l_j}{2} + \log (\delta_j) k_j|} M_q (M_q (\psi_{l_1,\cdots,l_n} (L_1^{\sharp},\cdots,L_n^{\sharp}) f )) (x).
\\
&\lesssim& \Biggl( \sum_{l_1,\cdots,l_n} 2^{-a \sum_{j=1}^{n} \left| \frac{l_j}{2} + \log (\delta_j) k_j \right| } \mathcal{M}_q(\psi_{l_1,\cdots,l_n}(L_1^{\sharp},\cdots,L_n^{\sharp}f )^2 (x) \Biggr)^{1/2}.
\end{eqnarray*}
Using this we get,
\begin{equation}
\begin{split}
&\left( S_{n+1} (m(L)f)(x)\right)^2
\\
&\quad\quad\quad\quad\quad\quad\quad = \sum_{k_1,\cdots,k_n} |D_{k_1,\cdots,k_n} (m(L)f)(x) |^2 
\\
&\quad\quad\quad\quad\quad\quad\quad \lesssim \sum_{k_1,\cdots,k_n} \sum_{l_1,\cdots,l_n} 2^{- a \sum_{j=1}^{n} \left| \frac{l_j}{2} + \log (\delta_j)k_j \right|}\mathcal{M}_q (\psi_{l_1,\cdots,l_n} (L_1^{\sharp},\cdots,L_n^{\sharp}) f )^2 (x)
\\
&\quad\quad\quad\quad\quad\quad\quad \lesssim \sum_{l_1,\cdots,l_n} \mathcal{M}_q (\psi_{l_1,\cdots,l_n} (L_1^{\sharp},\cdots,L_n^{\sharp}) f )^2 (x)
\\
&\quad\quad\quad\quad\quad\quad\quad  G_n (f)^2 (x),
\end{split}
\end{equation}
which is the asserted estimate.
\end{proof}

\begin{proof}[Proof of Theorem \ref{thm3}]
Set 
\begin{eqnarray*}
T_i^{1}(f) = \mathcal{A}( m_i (f))
\end{eqnarray*}
and
\begin{eqnarray*}
T_i^2(f) = m_i (f) - \mathcal{A}( m_i (f)) = \left( 1- (1-\mathcal{E}_1)\cdots (1-\mathcal{E}_n)\right) (m_i (f)).
\end{eqnarray*}
Then $m_i (f) = T_i^1 (f) + T_i^2 (f)$ and,
\begin{eqnarray*}
\sup_i |m_i (f) (x) | \leq \sup_i |T_i^1 (f)(x)| + \sup_{i} |T_i^2 (f)(x)|.
\end{eqnarray*}
Wee see that
\begin{equation*}
\{ x : \sup_i |m_i (f)(x)| > 2\lambda\} \subset \{ x : \sup_i |T_i^1 (f)(x)|> \lambda\} \cup \{ x : \sup_i |T_i^2 (f)(x)|> \lambda \}.
\end{equation*}
As for $T_i^{2}(f)$, we use Lemma 3.3 to get $\mathcal{E}_j (m_i (f))(x) \lesssim 2^{-N}M f(x)$. Using this and the trivial bound $\mathcal{E}_l (f)(x) \leq M f(x)$ for any $l$, we deduce that  $T_i^2 (f)(x) \lesssim 2^{-N} \mathcal{M} f(x),$
where $\mathcal{M} = M\circ \cdots \circ M$. Thus we get
\begin{eqnarray*}
\| T_i^{2}(f)(x)\|_{L^p} \lesssim 2^{-N} \| \mathcal{M} f\|_{L^p} \lesssim 2^{-N} \| f\|_{L^p}.
\end{eqnarray*}
 For the main term $T_i^{1}(f)$, we set $A_{\lambda} := \{ x \in G : \sup_{1\leq i \leq n} |T_i^{1} (f)(x)| > \lambda \}$. Then, 
\begin{equation*}
A_{\lambda}  \subset \{ x : \sup_i |T_i^1 (f)| > \lambda,~ G_r (f) (x) \leq C\epsilon^n \lambda \} \cup \{ x : G_r (f) (x) > C\epsilon^n \lambda \},
\end{equation*}

Since $G_n (f) (x) \geq C S_{n+1} (T_i^{1} f)(x)$ we have
\begin{equation*}
\{ x: \sup_i |T_i^{1}(f)(x)|> \lambda,~ G_r (f)(x) \leq C \epsilon^n \lambda\} \subset \bigcup_{i=1}^{N} B_{i,\lambda}
\end{equation*}
where $B_{i,\lambda} =  \{ x : |T_i^{1} (f)(x)| > \lambda, S_{n+1} (T_i^{1}f)(x) \leq  \epsilon^n \lambda \}.$ Then, since 
\begin{eqnarray*}
|A_{\lambda} | &\leq & \left| \bigcup_{i=1}^{N} B_{i,\lambda} \cup \{ x :G_r f(x) > C \epsilon^{n} \lambda\} \right|
\\
&\leq & \sum_{i=1}^{N} |B_{i,\lambda}| + |\{ x : G_r f(x) > C \epsilon^n \lambda\}|,
\end{eqnarray*} 
we have
\begin{equation}\label{sup1i}
\begin{split}
&\left\| \sup_{1 \leq i \leq N} | T_i^{1} (f)(x) | \right\|_{L^p (G)}^p
\\
&\quad\quad \leq \int p \lambda^{p-1} |A_{\lambda}| d\lambda 
\\
&\quad\quad \leq \sum_{i=1}^{N} \int_0^{\infty} p \lambda^{p-1} |B_{i,\lambda}| d\lambda + \int_0^{\infty} p \lambda^{p-1} | \{ x : G_r (f)(x) > C \epsilon^n \lambda \}| d\lambda. 
\end{split}
\end{equation}
For each $1 \leq i \leq n$, we split the set $B_{i,\lambda}$  as follows.\begin{equation*}
\begin{split}
B_{i,\lambda} \subset \{ x : |T_i^1 (f)(x)| > \lambda,~ S_{2} (T_i^1 f) (x) \leq \epsilon \lambda \} \cup \{ S_2 (T_i^{1} f)(x) >\epsilon \lambda,~S_{n+1} (T_i^1 f) (x) \leq \epsilon^n \lambda \}.
\end{split}
\end{equation*}
Similarly, for $1 \leq k \leq n-1$ we have
\begin{equation*}
\begin{split}
\{S_{k+1} (T_i^1 f) (x) > \epsilon^k \lambda, ~ S_{n+1} (T_i^{1} f)(x) \leq \epsilon^n \lambda \} &
\\
\subset \{S_{k+1} (T_i^1 f ) (x)  > \epsilon^k \lambda,~ &S_{k+2} (T_i^1 f) (x) < \epsilon^{k+1} \lambda \}
\\
 \cup \{ S_{k+2}& (T_i^1 f) (x) > \epsilon^{k+1}\lambda ,~ S_{n+1}(T_i^{1}f)(x) \leq \epsilon^n \lambda \}.
\end{split}
\end{equation*}
For $k=n-1$ the last set in the above equation is empty. Therefore,
\begin{equation*}
\begin{split}
B_{i,\lambda} \subset \{ x : |T_i^{1}(f)(x)|>\lambda, ~ S_2 (T_i^{1} f) (x) < &~\epsilon \lambda \} 
\\
 \bigcup_{i=2}^{n} \{x : S_k (T_i^{1} f)(x) &> \epsilon^{k-1} \lambda, ~S_{k+1}(T_i^{1} f)(x) < \epsilon^{k} \lambda \}.
\end{split}
\end{equation*}
Using Lemma \ref{for2m} we deduce that
\begin{eqnarray*}
|B_{i,\lambda}| &\leq& |\{ x : |T_i^{1}(f)(x)| > \lambda, ~S_2(m_i^{1} f)(x) < \epsilon \lambda\}|
\\
& &+ \sum_{k=2}^{n} | \{ x : S_k (T_i^{1} f)(x) > \epsilon^{k-1} \lambda, ~S_{k+1}(T_i^{1} f)(x) < \epsilon^{k}\lambda \}|
\\
&\leq & \sum_{k=1}^{n} e^{-\frac{C}{\epsilon^2}} \left| \left\{ x : |S_k^{*}(T_i^{1}f)(x) \geq \frac{1}{2}\epsilon^{k-1}\lambda \right\} \right|.
\end{eqnarray*}
Using this we can bound \eqref{sup1i} as follows.
\begin{equation}
\begin{split}
&\left\| \sup_{1 \leq i \leq N} | T_i^{1} (f)(x) | \right\|_{L^p (G)}^p
\\
&\quad\quad \lesssim   \sum_{i=1}^{N} \sum_{k=1}^{n} e^{-\frac{C}{\epsilon^2}} \int \lambda^{p-1} |\{ x : |S_k^{*} (T_i^{1} f) (x) \geq \frac{1}{2} \epsilon^{k-1}\lambda \} | d \lambda+ \epsilon^{-np} \| G_r (f) (x) \|_p^{p}
\\
&\quad\quad\lesssim  \sum_{i=1}^{N} \sum_{k=1}^{n} e^{-\frac{C}{\epsilon^2}} \int \epsilon^{-(k-1)p} \lambda^{p-1} | \{ x : |S_k^{*} (T_i^{1} f) (x) \geq \frac{1}{2} \lambda \} | d\lambda   + \epsilon^{-np} \| G_r (f) (x) \|_p^{p}
\\
&\quad\quad\lesssim  N e^{-\frac{C}{\epsilon^2}}\epsilon^{-(n-1)p} \sup_{k, i} \| S_k^{*} (T_i^{1} f)\|_{L^p}^p +  \epsilon^{-np} \| G_r (f) (x) \|_p^{p}.
\end{split}
\end{equation}
Take $ \epsilon = (\log {N+1})^{1/2}$, then from $\sup_{k,i} \| S_k^{*} (T_i^1 f) \|_{L^p} \lesssim \| f\|_{L^p}$ and $\|G_r (f) (x) \|_p \lesssim \| f\|_p$ ,  we get,
\begin{eqnarray*}
\|\sup_{1 \leq i \leq N} | T_i^{1} (f) (x)| \|_{L^p} \lesssim (\log{N+1})^{n/2} \|f \|_{L^p}.
\end{eqnarray*} 
This yields the desired inequality.
\end{proof}
For the joint spectral multiplier, the $L^p$ boundedness was proved by M\"uller-Ricci-Stein (see \cite[Lemma 2.1]{MRS}). They made use of the transference argument of Coifmann-Weiss \cite{CW} to prove Theorem \ref{thm3}.
\begin{proof}[Proof of Theorem \ref{thm4}]
Set $G=\mathbb{H}_n \times \mathbb{R}$. For a function $f$ defined on $G$, we set a function  $f^{b}$ defined on $\mathbb{H}_n$, 
\begin{eqnarray*}
f^{b} (z,t) = \int^{\infty}_{-\infty} f (z,t-u,u) du.
\end{eqnarray*}
Let $K$ be the kernel of a multiplier $m(L^{\sharp},iT).$ Then, $K^{b}$ equals to the kernel of $m(L,iT)$ (see \cite[p. 207]{MRS}). Thus,
\begin{eqnarray*}
 m(L, iT) \phi (z,t) &=&\phi * K^{b} (z,t)
\\
&=& \int_{\mathbb{H}_n} \phi ((z,t) \cdot (z',w)^{-1}) \left[ \int_{\mathbb{R}} K(z', w-u',u') du' \right] dz' dw
\\
&=&\int_G K(z,',t',u') \phi ((z,t) \cdot (z', t' + u')^{-1}) dz' dt' du'
\end{eqnarray*}
We temporarily suppose that support of $K_j (z',t',u')$ in $u'$ variable is in $[-M,M]$ for a fixed $M>0$.  For each $R \in \mathbb{N}$, we set  $\chi_R $  be the characteristic function on $[-2R, 2R]$. If $R \geq 10M$, then we get the following.
\begin{equation*}
\begin{split}
\bigl\| \sup_{1 \leq j \leq N} \bigr.&\bigl. m_j (L, iT) \phi (z,t) \bigr\|_{L^p (z,t)}  
\\
=& ~\Bigl\| \sup_{1 \leq j \leq N} \Bigl|\int_G K_j (z', t' ,u')( \phi ((z,t) \cdot (-z', -(t'+u')) dz' dt' du' \Bigr|\Bigr\|_{L^p (z,t)}
\\
\leq &~\frac{1}{R^{1/p}} \Bigl\| \sup_{1 \leq j \leq N} \left|\int_G (K_j (z,',t',u')) (\chi_R(u-u') \phi ((z,t+u) \cdot (-z',-(t' + u')) ) dz' dt' du' ) \Bigr|\right\|_{L^p (z,t,u)}
\\
\leq& ~\frac{1}{R^{1/p}} \Bigl\| \sup_{1 \leq j \leq N} | m_j (L^{\sharp}, iT)| \Bigr\|_{L^p \rightarrow L^p} \left\| \chi_R (u) \phi(z, t+u) \right\|_{L^p (z,t,u)}
\\
\leq& ~ 10~ \Bigl\| \sup_{1 \leq j \leq N}|m_j (L^{\sharp},iT)| \Bigr\|_{L^p \rightarrow L^p } \|\phi \|_{L^p (\mathbb{H}_n)}.
\end{split}
\end{equation*}
Since the above estimate does not depend on $M$, we can use an approximation argument to delete the assumption on the suppor of $K_j$. It gives the asserted inequality.
\end{proof}

\section*{Acknowledgements}\thispagestyle{empty} 
I am thanksful to my advisor Rapha\"el Ponge for his support and careful proofreading during the preparation of this paper, as well as for his introducing the paper \cite{S} which motivated me to learn the background for this paper.

\end{document}